\newcommand{\bbC}{\mathbb{C}}
\newcommand{\bbR}{\mathbb{R}}
\newcommand{\bbZ}{\mathbb{Z}}
\newcommand{\bbY}{\mathbb{Y}}
\newcommand{\Y}{\bbY}
\newcommand{\eps}{\varepsilon}
\newcommand{\Prob}{\mathop{\mathbb{P}}\nolimits}
\newcommand{\const}{\mathop{\mathrm{const}}\nolimits}
\newcommand{\sign}{\mathop{\mathrm{sign}}}
\newcommand{\nZ}{F}
\newcommand{\wt}{\mathbf{w}}
\newcommand{\rt}{\omega}
\newcommand{\Mm}{M_-}
\newcommand{\Mp}{M_+}
\renewcommand{\Re}{\mathop{\mathrm{Re}}}
\newcommand{\hV}{\widetilde{V}}
\newcommand{\mL}{\mathcal{L}}
\def\dd#1{\frac{\partial}{\partial#1}}
\newtheorem{theorem}{Theorem}
\newtheorem*{theorem*}{Theorem}
\newtheorem{proposition}{Proposition}
\newtheorem{Lem}{Lemma}
\newtheorem{pro}{Proposition}
\newtheorem{cor}{Corollary}
\newtheorem{conj}{Conjecture}
\newtheorem{question}{Question}
\theoremstyle{definition}
\newtheorem{definition}{Definition}
\newtheorem{rem}{Remark}
\author{A. Gordenko
\thanks{The author's work was partially supported by ANR Gromeov (ANR-19-CE40-0007).} 
\thanks{Univ Rennes, CNRS, IRMAR - UMR 6625, F-35000 Rennes, France.}
}
\title{Limit shapes of large skew Young tableaux and a modification of the TASEP process}
\begin{document}
\maketitle

\abstract{We present a survey of points of view on the problem of the asymptotic shape 
of a path between two large Young diagrams, and introduce a modification of the TASEP process 
related to it. This representation allows to write explicitly the functional, counting the asymptotics of the number of Young tableau close to a given one, as well as to see the sine-process on the boundary shape of a large random Young diagram.}

\tableofcontents

\section{Introduction}
\subsection{General background and overview of the problem}

The Young diagrams (YD for short) and notions, related to them, have been studied for a long time (for instance, see \cite{Feit, Carlitz, Concini, Edelman, Berele}). This study was motivated both by the combinatorial reasons (YD of size~$n$ correspond to partitions of number~$n$) and by the representation theory (YD of size~$n$ enumerate irreducible representations of the symmetric group~$S_n$). We denote the set of all Young diagrams of size $n$ by $\bbY_n$.

The \emph{Young graph} is an oriented graph that has Young diagrams as its vertices, and whose edges go from each YD to all YD's that can be obtained by adding a cell to the initial diagram. On the language of the representation theory, $\lambda \in \bbY_n$ is joined to all $\mu \in \bbY_{n+1}$ that are contained in the induced representation of~$S_{n+1}$, or equivalently, if the representation $\rho_\lambda$ is contained in the restriction of the corresponding representation $\rho_\mu$ to~$S_n$. The latter (together with the fact that the multiplicity of such an inclusion never exceeds one) implies that the dimension $\dim\lambda$ of the irreducible representation $\rho_{\lambda}$, associated to the YD $\lambda$, is equal to the number of paths in the Young graph that join the empty (or one-cell) diagram with $\lambda$.

A path in the Young graph, starting at the empty diagram, can be encoded by writing in each cell the number of step at which it is added, thus putting it into a bijective correspondence with a \emph{standard Young tableau}. The latter, by definition, is a way of putting numbers $\{1,\dots,n\}$ in cells of the YD of size $n$ in such a way that the numbers are increasing in each row and column, and that each number is used exactly once. (Similar construction with path going from some {\it non-empty} YD to another leads to the notion of a {\it skew Young tableau}.)

The representation theory then motivates the study of the Plancherel measure: one has 
$$
\sum_{\lambda\in \bbY_n} \dim^2 \lambda = n!,
$$
and hence the measure $\mu_n$ on $\bbY_n$, defined by $\mu_n(\{\lambda\})=\frac{\dim^2 \lambda}{n!}$, is a probability one.

\begin{figure}
\begin{center}
\includegraphics[width=7cm]{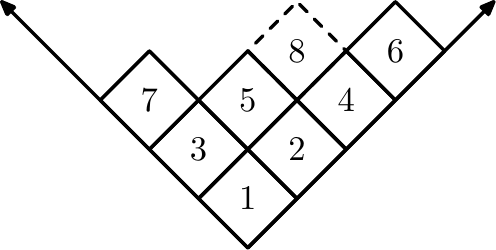}
\end{center}
\caption{Standard Young Tableau}\label{tableau}
\end{figure}

This measure gives rise to a central measure $\bar{\mu}$ on the paths on the Young graph. The {\it central measures} in general are defined in the following way. Assume that one is given a graph $G$ with the graded set of vertices $V=\bigsqcup_n V_n$, with edges joining vertices from $V_n$ to the vertices from $V_{n+1}$. By definition, a probability measure on the paths $\omega=\{\omega_n\}_{n=0}^{\infty}$, $\omega_n\in V_n$, is \emph{central} if for any $n$ and any $v\in V_n$ conditional to $\omega_{n}=v$ the initial part $\omega_0,\omega_1,\dots,\omega_{n-1},\omega_n$ of the path is distributed uniformly on all the paths that end at $v$ at the moment~$n$. 

It is easy to see that a central measure is necessarily Markovian (the future is independent from the past), and is  uniquely defined by its marginal measures $\mu_n$ defining the law of $\omega_n$. Vice versa, a sequence of measures $\mu_n$ defines a central measure, provided that they agree; the latter means that considering the law of $\omega_{n-1}$ in a uniformly chosen path leading to $v\in V_n$ and averaging with $v$ distributed w.r.t. $\mu_n$, we are getting $\mu_{n-1}$. One of the basic examples of such measures are Bernoulli ones: a random path $(x_n,y_n)$, where $x_n$ and $y_n$ are respectively the number of heads and tails after tossing of a Bernoulli coin $n$ times. Indeed, given the number $k=x_n$ of successes after tossing a coin $n$ times, all the ${n\choose k}$ possible placements of these successes are equiprobable~--- whichever was the probability~$p$ of a success.

As we have mentioned, it is known (though not evident) that Plancherel measures $\mu_n$ on sets $\bbY_n$ agree with each other and hence give rise to a central measure on the set of paths in the Young graph. This measure has forward transition probability from $\lambda\in \bbY_{n-1}$ to $\lambda'\in \Y_{n}$
$$
p_{\lambda \nearrow \lambda'} = \frac{\dim \lambda'}{n \dim \lambda}.
$$
It is easy to check that these probabilities define a Markov chain with marginal laws $\mu_n$ at time $n$, giving the backward transition probability
\begin{equation}\label{eq:backward}
\Prob(\omega_{n-1}=\lambda \mid \omega_{n}=\lambda') = \frac{\dim \lambda}{\dim \lambda'}
\end{equation}
(where $\omega_0=\emptyset, \omega_1,\omega_2,\dots$ is a path randomly chosen w.r.t. this measure) and 
hence satisfying a definition of a central measure (the relation~\eqref{eq:backward} easily implies that the distribution on the starting segments of paths coming to $\lambda\in \bbY_n$ is uniform).

A general paradigm of asymptotic combinatorics is that a large random combinatorial object often satisfies some kind of the ``law of large numbers'': if properly rescaled, it looks like a deterministic one. There are many examples of such results (for example see \cite{Propp, Vershik, Backhausz}). Of the ones related to YD, the first that we would like to mention here is the limit shape theorem, independently discovered in late 1970's by Versik and Kerov in the USSR and Logan and Shepp in the United States. 
Namely: take a random diagram $\lambda\in\Y_n$ (in French notation), contract it $\frac{1}{\sqrt{n}}$ times, and rotate it $45^{\circ}$ counterclockwise. This gives a random figure $F_n$ of unit area, placed between the rays $y=|x|$. Consider its outer boundary, extended by $y=|x|$ outside the diagram, as a graph of some 1-Lipshitz function~$f_{\lambda}$.

\begin{theorem}[Vershik, Kerov \cite{Vershik-Kerov}, Logan, Shepp \cite{Logan-Shepp}]
$f_{\lambda}$ converges in probability in $C^0$-topology to the limit function $\Omega(x)$, defined by 
$$
\Omega(x)=\begin{cases} \frac{2}{\pi}( \sqrt{2-x^2}+ x\arcsin \frac{x}{\sqrt{2}}), & |x|\le \sqrt{2},\\
|x|, & |x| \ge \sqrt{2}.
\end{cases}
$$
\end{theorem}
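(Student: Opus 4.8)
The plan is to prove the Vershik–Kerov–Logan–Shepp limit shape theorem via a large deviations approach. The key idea is that the Plancherel measure of a diagram $\lambda \in \bbY_n$ is $\frac{\dim^2\lambda}{n!}$, and by the hook length formula $\dim\lambda = \frac{n!}{\prod_{c\in\lambda} h(c)}$, so maximizing the measure amounts to minimizing $\sum_{c\in\lambda}\log h(c)$ over diagrams of size $n$. After the rescaling by $\frac{1}{\sqrt n}$ and the $45^\circ$ rotation, a diagram becomes a region under a $1$-Lipschitz profile $f$ with $f(x)\ge |x|$ and $\int(f(x)-|x|)\,dx = 2$ (the unit-area constraint in rotated coordinates). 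The first step is to show that $-\frac{1}{n}\log\mu_n(\{\lambda\})$ converges, for profiles approximating a fixed $f$, to a \emph{hook integral functional} of the form
\begin{equation}
I(f) = 1 + \iint_{x<y,\ f'(x)=1,\ f'(y)=-1}\log\bigl(\tfrac{y-x}{2}\bigr)\, d(\text{appropriate measure}),
\end{equation}
obtained by replacing the sum over hooks by a double integral over the "ascending" and "descending" parts of the boundary. Making this limit precise — controlling the passage from the discrete hook sum to the continuous functional uniformly over all profiles — is the main analytic content.

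Granting the functional $I(f)$, the second step is a \emph{variational problem}: minimize $I(f)$ subject to $f$ being $1$-Lipschitz, $f(x)\ge|x|$, and $\int(f-|x|)=2$. One would write down the Euler–Lagrange equation for this constrained minimization (introducing a Lagrange multiplier for the area constraint), which after the natural change of variables turns the stationarity condition into a \emph{singular integral equation} of the type solvable by Hilbert-transform / complex-analytic methods. Solving it, one finds that the extremal profile $\Omega(x)$ satisfies $\Omega'(x) = \frac{2}{\pi}\arcsin\frac{x}{\sqrt2}$ on $|x|\le\sqrt2$, which integrates to the stated formula; in particular one verifies directly that $\frac{d}{dx}\Omega(x)=\frac{2}{\pi}\arcsin\frac{x}{\sqrt2}$ gives $\Omega(x)=\frac{2}{\pi}(\sqrt{2-x^2}+x\arcsin\frac{x}{\sqrt2})$ on the relevant interval and matches $|x|$ at $x=\pm\sqrt2$. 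One must also check that the minimum value of $I$ equals $0$, consistent with $\sum_\lambda\mu_n(\{\lambda\})=1$ saturated by the neighborhood of $\Omega$.

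The third step is the \emph{concentration / law of large numbers} argument. Having identified $\Omega$ as the unique minimizer, one shows that $I(f)>0$ strictly for every $1$-Lipschitz admissible $f\ne\Omega$, with a quantitative lower bound on $I(f)$ in terms of $\|f-\Omega\|_{C^0}$ (strict convexity of $I$, or a direct lower bound, makes this clean). Since the number of diagrams of size $n$ grows only subexponentially in $\sqrt n$ (the partition function bound $p(n)=e^{O(\sqrt n)}$), a union bound shows
\begin{equation}
\Prob\bigl(\|f_\lambda-\Omega\|_{C^0}>\eps\bigr)\le p(n)\cdot\exp\bigl(-c(\eps)\sqrt n\bigr)\longrightarrow 0,
\end{equation}
which gives convergence in probability in the $C^0$-topology as claimed. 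The Lipschitz bound on every $f_\lambda$ guarantees tightness, so $C^0$-convergence on the compact interval follows from the pointwise estimates.

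\medskip

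\noindent\textbf{Main obstacle.} I expect the hardest part to be \emph{Step 1}, the rigorous derivation and uniform control of the limiting hook functional $I(f)$: one must justify replacing $\sum_{c}\log h(c)$ by a double integral, handle the boundary regions where $h(c)$ is small (hooks of length $O(1)$ contribute $O(\log n)$ errors that must be shown negligible after dividing by $n$), and obtain estimates uniform enough over the whole space of profiles to feed into the union bound of Step 3. The solution of the singular integral equation in Step 2 is technically delicate but classical; the genuinely new analytic difficulty is the uniform discrete-to-continuous comparison.
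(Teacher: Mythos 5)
This statement is not proved in the paper at all: it is quoted as a classical background result, with the proof residing in the cited works of Vershik--Kerov and Logan--Shepp. So the comparison can only be against that literature, and there your outline is faithful: the reduction of $\mu_n(\{\lambda\})$ via the hook length formula to a hook-integral rate functional, the constrained variational problem solved by Hilbert-transform/singular-integral methods, and the concentration step using the subexponential bound on $|\bbY_n|$ is exactly the Vershik--Kerov/Logan--Shepp strategy (the same three-step skeleton the paper itself exploits informally elsewhere, e.g.\ when it uses $|\bbY_n|\le e^{c\sqrt n}$ to localize $\dim\lambda$ near $\sqrt{n!}$). Your verification that $\Omega'(x)=\frac{2}{\pi}\arcsin\frac{x}{\sqrt 2}$ integrates to the stated formula and matches $|x|$ at $\pm\sqrt2$ is correct.

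Two slips should be fixed, one of which would break the argument as literally written. First, your union bound reads $\Prob(\|f_\lambda-\Omega\|_{C^0}>\eps)\le p(n)\exp(-c(\eps)\sqrt n)$; since $p(n)=\exp\bigl((\pi\sqrt{2/3}+o(1))\sqrt n\bigr)$, this product need not tend to zero~--- the two exponents are of the same order, and the conclusion would depend on a comparison of constants you have no control over. The correct statement coming out of Step 1 is $\mu_n(\{\lambda\})\le\exp(-c(\eps)\,n+o(n))$ uniformly over diagrams with $\|f_\lambda-\Omega\|_{C^0}>\eps$ (the rate functional lives at scale $n$, not $\sqrt n$), and then $p(n)\cdot e^{-c(\eps)n+o(n)}\to 0$ with room to spare. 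Second, your normalizations are mixed: the constraint $\int(f-|x|)\,dx=2$ and the hook length $\log\frac{y-x}{2}$ belong to the coordinates $x=j-i$, $y=i+j$ (limit shape supported on $[-2,2]$), whereas the statement you are proving, and your Step 2 answer $\Omega'(x)=\frac2\pi\arcsin\frac{x}{\sqrt2}$, use the isometric rotation with unit area ($\int(f-|x|)\,dx=1$, support $[-\sqrt2,\sqrt2]$). Either convention works, but Steps 1 and 2 must use the same one, otherwise the Euler--Lagrange equation you solve is not the stationarity condition of the functional you derived. With these repairs the outline is the standard correct proof; your identification of the uniform discrete-to-continuum comparison in Step 1 as the main technical burden is also accurate.
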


Now, a path in the Young graph $\omega_0\nearrow \omega_1\nearrow \dots \nearrow \omega_n$ can be also transformed in this way: rescaling it $1/\sqrt{n}$ times, we get an increasing family of figures of area $\alpha=0,\frac{1}{n},\dots,1$; again, rotating these figures by $45^{\circ}$, we can consider their (extended) outer boundaries as graphs of 1-Lipschitz functions $F_{\alpha}(x)$. This, together with the definition of the central measure, motivates the following two questions:

\begin{question}\label{q:o-l}
What can be said about a typical path from $\o$ to a given {\bf large} Young diagram~$\lambda$?
\end{question}

\begin{question}
What can be said on a random path from a given large Young diagram $\lambda_1$ to a given large Young diagram~$\lambda_2 \supset \lambda_1$? 
\end{question}

The former is already answered by the representation theory methods (see~\cite{Sniady}). The latter, its natural generalisation, was attacked with variational principle (\cite{MPP6, Sun}). We use similar approach in this paper too, though, from a different point of view.

Before proceeding, we would like to mention a few cases in which the Question~\ref{q:o-l} can be 
attacked by simple combinatorial methods. As we have already mentioned, the measure $\bar{\mu}$ is central. This implies that if we first choose a diagram $\lambda\in \Y_n$ w.r.t. the Plancherel measure and then pick a path $\emptyset=\omega_0,\omega_1,\dots,\omega_N=\lambda$ in the Young graph uniformly at random, then at each step $j$ (with $\alpha_j$ equal to the area of corresponding $\omega_j$) the diagram $\omega_j$ will be distributed w.r.t. the corresponding measure $\mu_j$. An application of the Vershik-Kerov-Logan-Shepp theorem then gives that the corresponding path $F_{\alpha}(x)$ converges in probability to the one given by rescaling of the shape $\Omega$,
$$
h_{\alpha}(x) = \sqrt{\alpha}\, \Omega(\frac{x}{\sqrt{\alpha}}).
$$
Thus, a random path to a Plancherel-random (and hence almost $\Omega$-shaped) Young diagram is given by rescaling of $\Omega$. 

\begin{figure}
\begin{center}
\includegraphics[width=9cm]{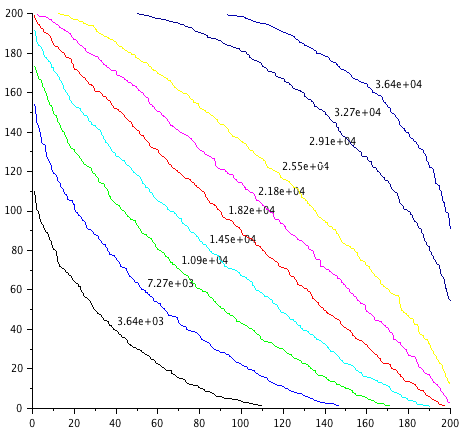}
\end{center}
\label{f:Sq}
\caption{Random YT, corresponding to the $100 \times 100$ square YD.}
\end{figure}

Next, a path, going towards a square- or rectangular-shaped Young diagram $\lambda$, can be described via the same methods as Vershik-Kerov-Logan-Shepp theorem, and it was done by Pittel and Romik in~\cite{PR}. 
Namely: the number of paths that pass through some diagram $\lambda'$ of size $j$ is a product of number of paths from $\emptyset$ to $\lambda'$ and of  number of paths from $\lambda'$ to~$\lambda$. The former can be calculated via the hook formula, and then its logarithm transformed (approximatively) into an entropy-type functional evaluated on~$\lambda'$. And the argument of Pittel and Romik says that the latter can also be calculated in this way, as the skew Young diagram $\lambda/\lambda'$ (that is, the set-theoretical difference $\lambda\setminus\lambda'$), rotated $180^{\circ}$, becomes again simply a Young diagram. Thus, one can estimate the number of paths that go through $\lambda'$, and maximizing the corresponding entropy functional, one finds the desired limit shape of the path; see Fig.~\ref{f:Sq}.

The above arguments also lead to the question of study of the number of paths from one Young diagram to the other, or, which is the same, the number of \emph{standard skew Young tableaux} of a given shape $\lambda/\lambda'$ (that is, ways of enumerating cells of $\lambda\setminus\lambda'$ in order as they appear in the path: enumeration that is increasing in each row and in each column).

It was studied in recent works by Morales, Pak, Panova and Tassy~\cite{MPP1, MPP2, MPP3, MPP5, MPP6}, using Naruse's modified hook-length formula~(\cite{NO}) and the notion of exited YD. They have conjectured (see~\cite[Conjecture 1]{MPP5}) and proved (\cite{MPP6}) that if the large diagrams $\lambda_N$ and $\lambda'_N$ have asymptotic shapes $L_{\lambda}$ and $L_{\lambda'}$ respectively (that is, the rescaled diagrams converge), then the number of paths $f^{\lambda_N/\lambda'_N}$ from $\lambda'_N$ to $\lambda_N$ has the asymptotics of the form
$$
\log F^{\lambda_N/\lambda'_N} = \frac{1}{2} n_N \log n_N + n_N \cdot c(L_{\lambda'},L_{\lambda}) + o(n_N),
$$
where $n_N=|\lambda_N/\lambda'_N|$ and $c$ is some functional. (Also, for $\lambda'$ much smaller than $\lambda$ this question was  studied in~\cite{D-F}, again, by the methods of the representation theory.)

Sun, in his work \cite{Sun}, using methods, introduced by Boutillier \cite{Boutillier}, and applying them to the \emph{beads model} (see Section \ref{s:view}), re-proved the existence of such a functional in terms of height function and also established the existence of a unique function that maximizes it.

In this paper, we present arguments that allow to write the explicit form of this functional. To state this question formally, let us give the following 
\begin{definition}
To a given large skew YT of the shape $\lambda/\lambda'$ and consisting of some number $n$ of cells, put in correspondence the function $g(t,x):[0,1]\times \bbR\to \bbR_+$, defined in the following way. For $j=0,1,\dots, n$, let $g(\frac{j}{n},x)$ be the function such that its graph is the outer boundary of the first $j$ cells of the YT, rotated by $45$ degrees and contracted by the factor $\sqrt{n}$, and let us extend this function on each of the intervals $t\in [\frac{j}{n},\frac{j+1}{n}]$ in an affine way.
\end{definition}
\begin{definition}
Consider a sequence of skew YD $\lambda_N/\lambda'_N$ of sizes $n_N$, such that the $45^{\circ}$-rotated $\frac{1}{\sqrt{n_N}}$-rescaled images of these skew YD are uniformly bounded and converge to some asymptotic shape $L/L'$. Say that the function $g(t,x)$ defines an asymptotic shape of the YT corresponding to this sequence if the functions $g_N(t,x)$ corresponding to random skew YT of the shapes $\lambda_N/\lambda'_N$ converge in probability to $g(t,x)$.
\end{definition}

\begin{conj}\label{Conj1}
\begin{itemize}
\item[$\bullet \, $]
The function $g(t,x)$, defining the asymptotic shape of a skew YT of a shape $L/L'$, maximizes the functional 

\begin{equation}\label{eq:L-def}
\mL[g]=\int_0^1 \int_{\bbR} g'_t (-\log g'_t + \log \cos \frac{\pi g'_x}{2}) \, dx \, dt - \log \frac{\pi}{\sqrt{2}}.
\end{equation}
with the boundary values $g(0,x)$ and $g(1,x)$ given by the shapes~$L$ and~$L'$ respectively.
The additive constant here is surely irrelevant for the purposes of the maximization problem, but it is important for the other conclusions.
\item[$\bullet \, $] The number $F^{\lambda_N/\lambda'_N}$ of such tableaux behaves as
\begin{equation}\label{eq:log-F}
\log F^{\lambda_N/\lambda'_N} = \frac{1}{2} n_N \log n_N + n_N\mL[g] + o(n_N),
\end{equation}
where $n_N=|\lambda_N/\lambda'_N|$ is the number of cells (recall that $g$ is chosen to be scaled to the area~$1$).
\end{itemize}
\end{conj}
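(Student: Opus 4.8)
The plan is to realize the growth of the skew tableau as the modified TASEP announced in the title, and then to read off $\mathcal{L}$ as the large‑deviation rate functional of that process. First I would fix the encoding: after the $45^\circ$ rotation, the outer boundary of each intermediate diagram $\lambda'=\mu_0\subset\mu_1\subset\cdots\subset\mu_n=\lambda$ is a sequence of unit up‑ and down‑steps, which I read as a $0/1$ particle configuration (say up‑step $=$ particle). Adding a cell turns a local valley (a down‑step immediately followed by an up‑step, i.e. a ``$01$'' pattern) into a peak, which is exactly a nearest‑neighbour jump of one particle. A standard skew Young tableau is then the same datum as a full trajectory of this exclusion process from the configuration of $\lambda'$ to that of $\lambda$ in $n$ steps, one jump per step, and $F^{\lambda_N/\lambda'_N}$ is precisely the number of such trajectories.

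Second, I would turn the count into a variational problem. Grouping trajectories by their coarse‑grained height profile $g(t,x)$ (the object of the Definition above), I expect $\log(\#\{\text{trajectories near }g\})=\tfrac12 n\log n+n\,I[g]+o(n)$ for a \emph{local} rate functional $I[g]=\int_0^1\int_{\bbR}\ell(g'_t,g'_x)\,dx\,dt$, after which a Laplace/Varadhan argument yields $\log F^{\lambda_N/\lambda'_N}=\tfrac12 n\log n+n\max_g I[g]+o(n)$, the maximum being over profiles $g$ matching the boundary data $L',L$. The universal leading term $\tfrac12 n\log n$ is the one already present for straight shapes, where $\log f^\lambda=\tfrac12 n\log n+O(n)$ via the hook‑length formula (so that $\log n!-\log\prod(\text{hooks})\approx\tfrac12 n\log n$); in the dynamical picture it reflects that a diagram of size $j$ has $O(\sqrt j)$ addable cells.

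The heart of the matter is the explicit computation of $\ell(g'_t,g'_x)$, which I would split into two contributions. The \emph{scheduling} entropy accounts for distributing the cell‑additions in space‑time at local rate $g'_t$: counting the ways to place $\approx g'_t\,n\,dx\,dt$ additions, relative to the leading $n^{n/2}$, produces the term $-g'_t\log g'_t$ by Stirling. The \emph{spatial} entropy accounts for the density of microscopic valleys available at a point of macroscopic slope $s=g'_x$; here I would exploit the free‑fermion / sine‑kernel structure (the sine process on the boundary mentioned in the abstract) of the relevant equilibrium measure. With up‑step density $\rho=(1+s)/2$ one has the clean identity $\sin\pi\rho=\cos\tfrac{\pi s}{2}$, so the off‑diagonal sine‑kernel value $\tfrac1\pi\sin\pi\rho=\tfrac1\pi\cos\tfrac{\pi s}{2}$ is exactly what governs the local jump rate; weighting by $g'_t$ then yields the term $g'_t\log\cos\tfrac{\pi g'_x}{2}$. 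Summing the two and fixing the normalization (which is where the additive $-\log\tfrac{\pi}{\sqrt2}$ enters) gives $\ell=g'_t(-\log g'_t+\log\cos\tfrac{\pi g'_x}{2})$, i.e. the integrand of $\mathcal{L}$. The two bullets of the Conjecture are then two faces of one statement: the maximizer of $\mathcal{L}$ subject to the boundary data is the limit profile $g$ (first bullet), and the value of the maximum is the $O(n)$ term in $\log F^{\lambda_N/\lambda'_N}$ (second bullet).

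The main obstacle, as in all limit‑shape theorems of this kind, is making the variational reduction rigorous rather than heuristic: one needs matching exponential upper and lower bounds for $\#\{\text{trajectories near }g\}$, together with a concentration argument (superadditivity, or strict concavity of $\mathcal{L}$ in $g'_t$ at fixed $g'_x$) ensuring that the count localizes near a single maximizer. The most delicate analytic point is the exact evaluation of the spatial entropy: the intermediate configurations are \emph{not} product measures, so the valley density must be extracted from the genuine correlation structure, and it is precisely here that the determinantal sine‑kernel computation has to be carried out carefully so as to produce $\cos\tfrac{\pi g'_x}{2}$ with the correct constant. Secondary difficulties are the unboundedness of the spatial domain (requiring uniform tail control from the hypothesis that the rescaled diagrams are uniformly bounded) and the behaviour near the frozen region $|g'_x|\to 1$, where $\cos\tfrac{\pi g'_x}{2}\to 0$ and the functional degenerates.
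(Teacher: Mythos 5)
Your outline follows the same strategy as the paper's own (explicitly non-rigorous) derivation via the modified TASEP: encode the growing boundary as an exclusion process, extract a scheduling term $-g'_t\log g'_t$ by Stirling, and extract the slope term from the free-fermion structure of the local equilibrium measure. However, there is a genuine gap exactly at the heart of your argument, namely the step producing $\log\cos\frac{\pi g'_x}{2}$. You assert that the off-diagonal sine-kernel value $\frac{1}{\pi}\sin\pi\rho=\frac{1}{\pi}\cos\frac{\pi s}{2}$ ``governs the local jump rate'' and that weighting by $g'_t$ yields the entropy term; but an entropy density is not the logarithm of a one-point jump density without a real argument. Indeed, the naive count of admissible moves --- the density of particle--hole pairs under the sine process --- equals $\rho(1-\rho)+\frac{\sin^2\pi\rho}{\pi^2}$, which is \emph{not} $\frac{\sin\pi\rho}{\pi}$, so ``log of the number of available moves'' heuristics land on a different function. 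The paper closes this gap by an actual computation (Theorem~\ref{th:TASEP}): the transition matrix of the circle process acts on $\Lambda^N\bbR^L$ as $C\wedge E\wedge\dots\wedge E$, its spectral radius is the Dirichlet-kernel value $\frac{\sin(\pi N/L)}{\sin(\pi/L)}$, and the Parry measure is determinantal with kernel the projection onto $N$ consecutive Fourier harmonics. The fact that the jump intensity of the limiting process equals $e^h$ --- which is what would validate your identification --- is itself a theorem there (Theorem~\ref{t:cylinder}), downstream of that spectral computation. Note also that you invoke the sine-process structure of the equilibrium measure as an input (quoting the abstract), whereas in the paper it is an \emph{output} of the same computation; as written, your argument is circular on this point.

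The second concrete gap is the additive constant $-\log\frac{\pi}{\sqrt2}$, which the conjecture explicitly flags as essential for the second bullet. Your two terms, as written, produce no constant at all, and ``fixing the normalization'' is not a mechanism. If you track your own constants, the kernel value carries a factor $\frac{1}{\pi}$, and the correct local entropy per cell is $\log\cos\frac{\pi g'_x}{2}-\log\frac{\pi g'_t}{2}$ in the area-$2$ normalization natural for maya diagrams; the constant then arises from the mismatch between $\log\frac{\pi g'_t}{2}$ and $\log g'_t$ combined with the rescaling to area $1$, and none of this bookkeeping appears in your outline. The paper also pins the constant down by an independent calibration: since $\sum_{\lambda\in\bbY_n}\dim^2\lambda=n!$, the Vershik--Kerov--Logan--Shepp profile must satisfy $\mL[\Omega(t,x)]=-\frac12$, which forces $C=-\log\frac{\pi}{\sqrt2}$. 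Finally, be aware that the paper has a second, independent route that you do not use and which avoids any entropy computation: accept the general form $\iint(-\log g'_t+A(g'_x))\,g'_t\,dx\,dt$ from the cutting argument, plug the known family $\Omega(t,x)=\sqrt{t}\,\Omega(x/\sqrt{t})$ into the Euler--Lagrange equation, and solve the resulting ODE for $A$; this is a useful cross-check that any candidate integrand (including yours) must pass.
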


Moreover, take any other continuous and almost everywhere smooth function $g_0(t,x)$, satisfying the same boundary conditions, as well as the area restrictions
$$
\forall t\in [0,1] \quad \int_{\bbR} (g_0(t,x)-g_0(0,x)) \, dx  = t.
$$
Then for any $N$ one can consider the number~$\nZ_{\eps, g_0}^{\lambda_N/\lambda'_N}$ of the YT such that the corresponding function $g$ is $\eps$-close (in the $C^0$-topology) to the function~$g_0$. And actually, the functional $\mL$ should describe the asymptotics of number such paths for any $g_0$, and this is the reason why it appears in the previous conjecture:
\begin{conj}\label{Conj2}
The number $\nZ_{\eps, g_0}^{\lambda_N/\lambda'_N}$ of YT of the shape $\lambda_N/\lambda'_N$ and $\eps$-close to the form $g_0$ has the asymptotic behaviour
$$
\log \nZ_{\eps, g_0}^{\lambda_N/\lambda'_N} = \frac{1}{2} n_N \log n_N + n_N \cdot \mL[g_0]  + o(n_N)
$$
as $n_N \rightarrow \infty$ and as $\eps \rightarrow 0$, in the sense that the double limit for the error term vanishes:
$$
\lim_{\eps\to 0} \limsup_{N\to\infty} \frac{1}{n_N} \left( \log \nZ_{\eps, g_0}^{\lambda_N/\lambda'_N} - \frac{1}{2} n_N \log n_N + n_N \cdot \mL[g_0] \right)=0.
$$
\end{conj}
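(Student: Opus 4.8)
The plan is to establish Conjecture~\ref{Conj2} as a large-deviations (Boltzmann--Sanov) statement for the uniform measure on skew standard Young tableaux of shape $\lambda_N/\lambda'_N$, and then to recover Conjecture~\ref{Conj1} from it by a Laplace/Varadhan principle: the total count $\nZ^{\lambda_N/\lambda'_N}$ equals, to exponential order, $\max_{g_0}\nZ_{\eps,g_0}^{\lambda_N/\lambda'_N}$, so the maximizer of $\mL$ governs both the leading asymptotics~\eqref{eq:log-F} and the concentration of a typical path. Throughout I would work in the rotated coordinates, in which a diagram is a $1$-Lipschitz height profile and adding a cell turns a local minimum (a ``valley'') of the profile into a local maximum, i.e.\ performs a single jump of the modified TASEP of the present paper (Section~\ref{s:view}). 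A skew tableau is then exactly a trajectory of this process, and $\nZ_{\eps,g_0}$ counts the trajectories whose space-time height function is $\eps$-close to $g_0$.

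First I would \emph{discretize space and time}, cutting $[0,1]$ into $m$ slabs and $\bbR$ into spatial cells on which $g_0$ is affine, so that $g'_t$ and $g'_x$ are essentially constant on each box and $\approx n_N/m$ cells are inserted per slab. The decisive structural input is a \emph{factorization}: up to a $\exp(o(n_N))$ error,
$$
\log \nZ_{\eps,g_0}^{\lambda_N/\lambda'_N} \;=\; \sum_{k} \log N_k \;+\; o(n_N),
$$
where $N_k$ is the number of ways to insert the cells of the $k$-th slab consistently with the prescribed change of profile. This is where the Markovian (central) nature of the construction, already exploited in~\eqref{eq:backward}, enters: conditioned on the intermediate profiles, disjoint space-time regions decouple to leading exponential order.

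Next I would compute each local rate. One part of it is the entropy of \emph{when and where} cells are added: distributing the $n_N/m$ labelled cells of a slab among the spatial cells according to the area flux $g'_t(t,x)\,dx$ gives a multinomial factor, whose logarithm is $\tfrac{n_N}{m}\bigl(-\int g'_t\log g'_t\,dx\bigr)$ together with a bin-width term $\tfrac12\log n_N$ per slab coming from the microscopic lattice spacing $\sim n_N^{-1/2}$; summed over slabs the latter produces the global $\tfrac12 n_N\log n_N$ (equivalently the square-root/hook-length scaling, consistent with~\cite{MPP6} and~\eqref{eq:log-F}). The second part is the multiplicity of addable corners available at local slope $s=g'_x$: under the locally-equilibrium sine-process measure that governs the boundary at slope $s$, the density of valleys per unit length is $\tfrac1\pi\cos\frac{\pi s}{2}$ (vanishing as $s\to\pm1$, as it must), and weighting this by the insertion rate yields $\int g'_t\log\cos\frac{\pi g'_x}{2}\,dx$. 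Letting $m\to\infty$, the Riemann sums converge to $n_N\,\mL[g_0]$, with the additive constant $-\log\frac{\pi}{\sqrt2}$ collecting the normalization of the valley density and of the $n_N^{-1/2}$ rescaling. Finally, to close the double limit I would prove matching bounds: for the lower bound, exhibit in each box at least $\exp(\log N_k-o(n_N/m))$ genuinely distinct microscopic fillings whose concatenation stays $\eps$-close to $g_0$ (local equilibration makes them abundant); for the upper bound, control the sum over \emph{all} profiles inside the $\eps$-tube by a covering argument, using that only $\exp(o(n_N))$ discretized profiles occur, and then let $N\to\infty$ before $\eps\to0$.

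I expect the \textbf{main obstacle} to be exactly the factorization/decoupling step together with the rigorous identification of the local rate. The exclusion interaction of the modified TASEP creates genuine correlations between neighbouring space-time boxes, and proving that these affect $\log\nZ_{\eps,g_0}$ only at the $o(n_N)$ level --- uniformly enough to survive the interchange of the limits in $m$, $N$ and $\eps$ --- is the delicate hydrodynamic/large-deviations estimate on which the whole scheme rests. Pinning the exact coefficient $\tfrac1\pi\cos\frac{\pi g'_x}{2}$ of the corner-multiplicity term (rather than a naive i.i.d.\ count, which would give the wrong constant) is the part that genuinely requires the determinantal, sine-kernel analysis of the boundary measure, and I would treat it as the analytic heart of the argument.
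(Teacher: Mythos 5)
Keep in mind that the statement you are proving is presented in the paper as a \emph{conjecture}: the paper's own supporting derivation (Section~\ref{s:cut}) is explicitly non-rigorous, and rigorous versions (under a stronger notion of closeness) are credited to Sun~\cite{Sun}. At the structural level your proposal follows the same route as the paper: your space--time discretization and factorization up to $\exp(o(n_N))$ is the paper's ``horizontal cut'' plus ``vertical cut'' (with the Hardy--Ramanujan bound and the corner-removal multiplicity bound controlling the errors), your multinomial term producing $-\iint g'_t\log g'_t\,dx\,dt$ together with the global $\tfrac12 n_N\log n_N$ is exactly the paper's treatment of the multinomial coefficient $R_i$ and of the $\log l_{i,j}$ terms, your unproven decoupling estimates are the same gaps the paper acknowledges, and deducing Conjecture~\ref{Conj1} from Conjecture~\ref{Conj2} by maximizing over profiles is also how the paper proceeds.

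The genuine gap is in the step you yourself call the analytic heart: the identification of the slope-dependent local rate. You assert that under the sine-process measure at slope $s$ the density of addable corners (a stone at $k$ with an empty hole at $k+1$) equals $\frac1\pi\cos\frac{\pi s}{2}$. This is false: with the discrete sine kernel ($K(k,k)=p$, $K(k,l)=\frac{\sin(\pi p(k-l))}{\pi(k-l)}$ for $k\neq l$, $p=\frac{1+s}{2}$) the two-point determinant gives the valley density $p(1-p)+\frac{1}{\pi^2}\cos^2\frac{\pi s}{2}$, e.g.\ $\frac14+\frac1{\pi^2}\approx 0.351$ at $s=0$, not $\frac1\pi\approx 0.318$. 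Moreover, even with the correct density, the heuristic ``local rate $=$ $\log$(density of available moves)'' does not compute the entropy: for a topological Markov chain one has $\log\lambda_{\max}\le\bbE_\mu\bigl[\log(\text{out-degree})\bigr]$ by Jensen, with strict inequality here (the Parry transition probabilities are not uniform), so your scheme carried out literally would yield a functional \emph{strictly larger} than $\mL$. The number $\frac1\pi\cos\frac{\pi s}{2}$ is not a one-time correlation function of the equilibrium measure at all: it is $\lim_{L\to\infty}\lambda_{\max}/L$, the normalized spectral radius of the transfer matrix (equivalently, the space--time intensity $e^h/L$ of the limiting bead process of Theorem~\ref{t:cylinder}), so identifying it \emph{presupposes} the entropy computation rather than replacing it. This is precisely what the paper supplies and what your sketch is missing: either the exterior-power/Fourier-harmonics computation of Theorem~\ref{th:TASEP}, giving $\log\frac{\sin(\pi N/L)}{\sin(\pi/L)}$ and then glued along the profile as in Section~\ref{s:TASEP}; or, bypassing local entropy computations altogether, the ansatz $Z(\alpha,h,l)=hl\log l+A(\tan\alpha)\,hl+o(hl)$ with the unknown $A$ pinned down by demanding that the Vershik--Kerov--Logan--Shepp shape solve the Euler--Lagrange equation of $\mL$, which forces $A(\xi)=\log\cos\frac{\pi\xi}{2}+C$, with $C=-\log\frac{\pi}{\sqrt2}$ recovered from $\sum_{\lambda\in\bbY_n}\dim^2\lambda=n!$. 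Your proposal needs one of these (or an equivalent computation) even to reach the paper's heuristic level, since as written its central identification is incorrect and would not reproduce $\mL$.
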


\begin{rem}
With a slightly stronger notion of closeness for the skew Young diagrams to their limit forms (the limit shape boundary should be within $\const$ times the size of a cell), these statements are established in Sun's preprint~\cite{Sun}: see Definition~5.4, Theorems~7.1, 7.15 and~9.1 therein. However, we believe that these assumptions can be weakened; it seems also interesting to us that these predictions can be found by a straightforward and not too technically complicated approach.
\end{rem}

\begin{rem}
Note that we can choose another scaling normalization for the function~$g$, not necessarily choosing it to be spanned area~$1$. Let us pass to the total area~2 normalization;
formally speaking, we consider $\tilde{g}(t,x)=\sqrt{2} \,g(t,\frac{x}{\sqrt{2}})$. This normalization comes out of maya diagram consideration, see Remark~\ref{area2}. In this normalization, the functional $\mL$ can be rewritten as
\begin{equation}\label{eq:t-L}
\mL[g]=\widetilde{\mL}[\tilde{g}] := \frac{1}{2} \int_0^1 \int_{\bbR} (-\log \frac{\pi \tilde{g}'_t }{2} + \log \cos \frac{\pi \tilde{g}'_x}{2}) \, \tilde{g}'_t \, dx \, dt.
\end{equation}
The factor~$\frac{1}{2}$ here is due to the area change, while the constant $\log\frac{\pi}{\sqrt{2}}$ disappears due to the replacement of $\log g'_t$ by $\log \frac{\pi \tilde{g}'_t }{2} = \log \frac{\pi g'_t }{\sqrt{2}}$. 

It is interesting to note that in~\eqref{eq:t-L}  the derivatives in both directions of $\tilde{g}$ are multiplied by~$\frac{\pi}{2}$, possibly suggesting that $\frac{\pi \tilde{g}}{2}$ might be in some sense a more ``natural'' object.
\end{rem}

\begin{rem}\label{r:no-n-log}
A further rescaling by a factor of $n$, that is, consideration of $\tilde G(t,x):=\sqrt{n} \,\tilde g(\frac{t}{n},\frac{x}{\sqrt{n}})$, gives a figure of area $2n$, spanned during the time $n$. In these terms, the right hand side of~\eqref{eq:log-F} (except for the error term) can be written as 
\begin{equation}\label{eq:h-G}
\hat{\mL}[\tilde G] = \frac{1}{2} \int_{\bbR} \int_0^{n} (-\log \frac{\pi \tilde{G}'_t }{2} + \log \cos \frac{\pi \tilde{G}'_x}{2}) \, \tilde{G}'_t  \, dx \, dt.
\end{equation}
\end{rem}

\subsection{Modification of TASEP and computation of its entropy}\label{sec:MR}

We note that the standard skew Young diagrams (or, what is the same, paths on the Young graph) can be seen as a special kind of domino tiling on the (special part of a) hexagonal lattice. Moreover, adding a limit to this construction, one can remove the conditioning on the tiles (those not satisfying the condition have asymptotic measure zero). This is done in Section~\ref{s:view}.

This point of view, though simple, leads to interesting conclusions. It gives a strong evidence for the law of large numbers for the path between two large diagrams: there should be an asymptotic shape of a path, because there is one for the domino tilings. It allows to predict the entropy functional maximized by this path, and for that motivates an introduction of the following modified version of TASEP. 

Consider a circle with holes on it and stones placed in some of them. Every step one of the stones moves into the next hole to its right. In the classical TASEP model, all the stones which can move, do so with equal probabilities. In our case, however, the corresponding probabilities are different and depend on how freely a stone can move. Namely, we choose the probabilities of jumps in order for the entropy of the process to be maximal. We explain this in Section~\ref{s:TASEP}, and prove the following result

\begin{theorem}\label{th:TASEP}
For a circle of length $L$ with $N$ stones on it, the entropy of the corresponding topological Markov chain is equal to $\log \frac{\sin \frac{\pi N}{L}}{\sin \frac{\pi}{L}}$. The probabilities of states for the measure of maximal entropy are given by a determinantal measure whose correlation kernel is given by the projection on (any) $N$ consecutive Fourier harmonics~(out of $L$).
\end{theorem}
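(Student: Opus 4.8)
The plan is to treat the modified TASEP as a subshift of finite type on the alphabet of stone configurations and to extract both assertions from Perron--Frobenius theory after realizing the transition matrix as a free-fermion hopping operator. First I would set up the combinatorics: the states are the $N$-element subsets $S$ of the cyclic set $\mathbb{Z}/L\mathbb{Z}$, and the $0$--$1$ transition matrix $A$ has $A_{S,S'}=1$ exactly when $S'=(S\setminus\{i\})\cup\{i+1\}$ for some $i\in S$ with $i+1\notin S$ (indices mod $L$). For $0<N<L$ the chain is irreducible, so by Perron--Frobenius the spectral radius $\rho(A)$ is a simple positive eigenvalue; the topological entropy of the chain equals $\log\rho(A)$, and the (unique) measure of maximal entropy is the Parry measure $\pi(S)=u_S v_S/\langle u,v\rangle$, where $v$ and $u$ are the right and left Perron eigenvectors. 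Thus the whole theorem reduces to diagonalizing $A$ and reading off its top eigenvector.

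The key step is a free-fermion identification. Identify $S=\{x_1<\dots<x_N\}$ with the Slater state $e_{x_1}\wedge\cdots\wedge e_{x_N}\in\Lambda^N\mathbb{C}^L$ and consider $F=\sum_i c_{i+1}^\dagger c_i$ (cyclic indices). A direct Jordan--Wigner sign computation shows that $F$ agrees with $A$ on every bulk hop (the two transposition signs from $c_i$ and $c_{i+1}^\dagger$ cancel, giving $+1$) and differs only on the seam hop $L\to 1$, where it carries the sign $(-1)^{N-1}$. Hence $A$ equals the $N$-particle sector of the second quantization of the single-particle cyclic shift, with a boundary twist chosen to cancel this sign: periodic for $N$ odd and antiperiodic for $N$ even. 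Equivalently, up to a diagonal $\pm1$ gauge, $A$ is a normal operator whose eigenvectors are Slater determinants of Fourier modes.

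Now I would diagonalize by Fourier analysis. The single-particle eigenfunctions are $\phi_k(x)=L^{-1/2}e^{2\pi i kx/L}$ with eigenvalues $\mu_k=e^{2\pi i k/L}$, where $k$ runs over $\mathbb{Z}$ for $N$ odd and over $\mathbb{Z}+\tfrac12$ for $N$ even. The many-body eigenvalues are the sums $\sum_{k\in K}\mu_k$ over $N$-element momentum sets $K$, with eigenvector $\det[\phi_k(x)]_{k\in K,\,x\in S}$. Since $\rho(A)=\max_K|\sum_{k\in K}\mu_k|$, a rearrangement argument shows the maximum is attained when $K$ is a block of $N$ consecutive momenta symmetric about $0$, and the resulting geometric sum telescopes,
$$
\sum_{j=0}^{N-1} e^{2\pi i\left(j-\frac{N-1}{2}\right)/L}=\frac{\sin(\pi N/L)}{\sin(\pi/L)},
$$
which is real and positive and hence equals $\rho(A)$; this yields the entropy formula.

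For the second assertion, with the symmetric consecutive block $K$ the right Perron eigenvector is $v_S=\det[\phi_k(x)]_{k\in K,\,x\in S}$, and since the gauged operator is normal the left eigenvector is its complex conjugate, so the diagonal gauge cancels in the product and $\pi(S)\propto|v_S|^2$. Cauchy--Binet then gives $|\det[\phi_k(x)]|^2=\det[\mathbb{K}(x,y)]_{x,y\in S}$ with
$$
\mathbb{K}(x,y)=\sum_{k\in K}\phi_k(x)\overline{\phi_k(y)}=\frac{1}{L}\sum_{k\in K}e^{2\pi i k(x-y)/L},
$$
the orthogonal projection onto the span of the $N$ consecutive harmonics; because this is a rank-$N$ projection its principal $N\times N$ minors sum to $1$, so $\pi$ is exactly the claimed determinantal measure and the normalization is automatic. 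I expect the main obstacle to be the bookkeeping of the second paragraph: verifying that the fermionic signs are trivial in the bulk and collapse to the single seam sign, and that the parity-dependent boundary twist is precisely what forces $\rho(A)$ to be real and equal to the centered momentum sum. Once that identification is secure, the Fourier diagonalization, the geometric summation, and the Cauchy--Binet step are all routine.
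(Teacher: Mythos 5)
Your proposal is correct and takes essentially the same route as the paper: your second-quantized hopping operator $\sum_i c_{i+1}^\dagger c_i$ restricted to the $N$-particle sector is precisely the paper's operator $C\wedge E\wedge\dots\wedge E$ acting on $\Lambda^N V$, and your parity-dependent boundary twist (integer momenta for odd $N$, half-integer for even $N$) is exactly the paper's replacement of $C$ by $\rt C$ with $\rt=e^{\pi i/L}$ and the twisted basis $\rt^{k_1+\dots+k_N}e_{k_1}\wedge\dots\wedge e_{k_N}$. The remaining steps — maximal eigenvalue from the symmetric consecutive momentum block summing to $\sin(\pi N/L)/\sin(\pi/L)$, and the Parry measure $u_S v_S=|v_S|^2$ turned into a minor of the rank-$N$ Fourier projection via Cauchy--Binet — coincide with the paper's computation.
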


This process turns out to be interesting in its own: its stationary measure is determinantal, and passing to the limit it gives a handwaving explanation for the sine-process appearing on the boundary of the random large Young diagram (see Remark~\ref{r:sine-limit}) and finding the precise formula for the functional, appearing in Morales-Pak-Panova-Tassy theorem (see Conjecture~\ref{Conj1}).
In fact, we note that the formula for that functional can also be guessed by a very simple differential equations argument (see Section~\ref{s:cut}), naturally, leading to the same answer, yielding Conjectures~\ref{Conj1} and \ref{Conj2}.

\subsection{Relation to the dimer and beads models}

We encode the evolution of the modified TASEP process into a certain dimer model on the corresponding planar (mostly hexagonal) graph. Introducing a ``tax'' on edges of one of the directions ``freezes'' the model; joining it with the time rescaling, we find a nontrivial ``diagonal'' limit process. 

On one hand, such process can be explicitly described in terms of the original m-TASEP Markov chain: 
\begin{theorem*}[Theorem~\ref{t:cylinder}]
This limit process is given by coupling a maximal entropy measure for the two-sided topological Markov chain and of a Poisson process on $\bbR$ of constant intensity, providing the jump moments. The intensity of the Poisson process is equal to $e^h$, where $h$ is the entropy of the Markov chain (given by Theorem~\ref{th:TASEP}).
\end{theorem*}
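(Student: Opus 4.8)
The plan is to exploit the fact that the dimer measure is determinantal, and to carry out the whole argument as the convergence of a space--time correlation kernel in the joint ``tax $+$ rescaling'' limit, tracking the Perron data of the transfer operator. First I would recall the encoding of the m-TASEP evolution as a dimer configuration on the relevant (hexagonal) graph and isolate the transfer operator $T$ acting along the time direction. Without any tax its Perron eigenvalue is $\Lambda=e^{h}$, with $h$ the entropy computed in Theorem~\ref{th:TASEP}, and the associated Parry measure is precisely the ``maximal entropy measure for the two-sided topological Markov chain'' of the statement; by Theorem~\ref{th:TASEP} its spatial correlations are governed by the projection onto $N$ consecutive Fourier harmonics. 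Since the dimer process is determinantal, so is the full space--time process, and it suffices to identify the limit of its kernel.

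Next I would introduce the tax on the chosen family of edges as a multiplicative weight $z=e^{-\beta}$, tilting $T$ to an operator $T(z)$ with Perron eigenvalue $\Lambda(z)$, and couple $\beta\to\infty$ with a rescaling of the time variable. The heart of the matter is to show that in this diagonal limit the correlation kernel splits into a spatial block and a temporal block. The decisive structural feature is that the tax is \emph{uniform} over the edges of the chosen direction: conditionally on the positions of the (now sparse) taxed edges --- which record the jump moments --- the remaining configuration is distributed exactly according to the untaxed maximal-entropy measure. This produces the coupling claimed in the theorem, namely that the sequence of visited states is governed by the maximal-entropy measure of the two-sided chain, independently of the moments at which the jumps occur.

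It then remains to identify the law of the jump moments together with their intensity. In the freezing limit the taxed edges become sparse and their pairwise correlations, read off from the off-diagonal of the rescaled kernel, decay to zero; by the standard rare-events (Poissonization) mechanism for determinantal processes whose off-diagonal vanishes, the point process of jump moments converges to a Poisson process on $\bbR$. Its intensity is the diagonal of the limiting kernel, which I would compute from the Perron data: fixing the time rescaling at the freezing threshold, the density of taxed edges equals the Perron eigenvalue $\Lambda=e^{h}$, giving the asserted constant intensity $e^{h}$.

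The main obstacle I expect is the rigorous control of this joint limit. One must establish uniform spectral estimates for the tilted operator $T(z)$ as $\beta\to\infty$, and prove that the rescaled correlation kernel converges in such a way that the temporal off-diagonal genuinely vanishes while the spatial block retains the Parry/Fourier-projection structure, so that the limiting determinantal kernel really factors as a product of the spatial maximal-entropy kernel and a (trivial) Poisson kernel of intensity $e^{h}$. Pinning down the precise time normalization that yields exactly $e^{h}$, rather than an undetermined constant, is the delicate quantitative step, and it is here that the explicit eigenvalue supplied by Theorem~\ref{th:TASEP} becomes essential.
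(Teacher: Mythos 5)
There is a genuine gap at the heart of your argument: the claimed factorization of the limiting space--time kernel. You want the rescaled correlation kernel to split into a spatial (Fourier-projection) block and a temporal block whose off-diagonal vanishes, so that the jump process becomes Poisson by a rare-events mechanism. But the actual limit kernel, computed in the paper via Kasteleyn theory (Theorem~\ref{beads}, with $\tilde{K}$ given by~\eqref{eq:lim-K}), is $\tilde{K}(t,k)=\frac{1}{L}\sum_{r} e^{-t\zeta_r}e^{-2\pi i k r/L}$ (sum over $N$ consecutive harmonics for $t>0$, over the complementary ones for $t\le 0$): it does not factor as a function of $t$ times a function of $k$, and its temporal off-diagonal does not vanish. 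Indeed it cannot: the limiting jump process is a (cylindric) beads process, in which between two consecutive jumps at a site there must be jumps at the neighbouring sites --- strong space--time correlations that survive every rescaling. What Theorem~\ref{t:cylinder} actually asserts is a different, subtler decoupling: the point process of jump \emph{times} (after summing over all $L$ positions, i.e., forgetting which stone jumps) is Poisson, and, independently of it, the \emph{sequence} of transitions follows the maximal-entropy measure. That time-marginal of a determinantal process is not read off from the kernel by any ``off-diagonal vanishes'' argument, so your route collapses precisely at the decisive step.

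The paper avoids kernels entirely here and gets the decoupling for free from the weighting: every configuration with $n$ jumps, all at distinct levels, has the same weight $\eps^n$, so conditionally on $n$ (and on Lemma~\ref{l:no-simultaneous}, which kills simultaneous jumps in the limit) the jump moments are i.i.d.\ uniform on $[-\frac{\tau}{2},\frac{\tau}{2}]$ and, independently, the path is uniform among the $W_n$ admissible ones. The intensity then comes from Lemma~\ref{l:proba} ($\xi/\tau\to e^{h}$ in probability), which needs the asymptotics $W_n\sim c\rho^n$ \emph{along the residue class} $n\equiv n_0 \bmod L$: the chain has period $L$, not $1$. This periodicity is a second gap in your sketch: for a period-$L$ chain, uniform measures on paths with fixed boundary conditions do \emph{not} converge in the bulk to the maximal-entropy measure, but to one of its $L$ cyclic components. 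The paper recovers the Parry measure only because the random number of jumps separating the observation window from the boundary is asymptotically equidistributed mod $L$, which averages those components. Your conditional claim (``conditionally on the taxed edges, the rest is exactly the untaxed maximal-entropy measure'') silently assumes aperiodicity and would need exactly this extra equidistribution argument to be repaired.
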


On the other, using Kasteleyn theory~\cite{Kast1, Kast2}, we see that it can be described by a determinantal-type formula, and get an explicit description for its correlation kernel:

\begin{theorem*}[Theorem~\ref{stones}]
For the limit process in Theorem~\ref{t:cylinder}, the probability that the \emph{stones} are present at positions $k_1,\dots,k_n$ at times $t_1,\dots,t_m$ is equal to the determinant 
\[
\det (\tilde{K}(t_a-t_b, k_a - k_b)_{a,b=1,\dots,n}),
\]
where the kernel~$\tilde{K}$ is given by~\eqref{eq:lim-K},~\eqref{eq:lim-K-even}.
\end{theorem*}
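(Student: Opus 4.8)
The plan is to represent the space-time evolution of the limit process from Theorem~\ref{t:cylinder} as a dimer model on the hexagonal lattice drawn on a cylinder --- periodic of period $L$ in the spatial direction and extended along the time direction --- so that the presence of a \emph{stone} at a space-time point $(t,k)$ corresponds to the occupation of a prescribed local edge. With this encoding fixed, the determinantal structure is a consequence of Kasteleyn theory for bipartite planar graphs: the joint Boltzmann probability that a finite family of edges $e_1,\dots,e_n$ is simultaneously occupied in a random dimer configuration equals $\big(\prod_i K(e_i)\big)\det\big[K^{-1}(e_i,e_j)\big]$, a determinant whose entries are values of the inverse Kasteleyn operator at the endpoints of the edges. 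The first step is therefore to write down the weighted Kasteleyn matrix $K$ of the graph (including the ``tax'' weight carried by one family of edges), to check that the sign convention is admissible, and to translate the event that stones sit at $(t_1,k_1),\dots,(t_n,k_n)$ into the occupation of the associated edges. After absorbing the edge-weight prefactors by a gauge transformation of $K^{-1}$, the theorem reduces to identifying the corresponding minor of $K^{-1}$ with $\det\big(\tilde K(t_a-t_b,k_a-k_b)\big)$.

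The second step is to compute $K^{-1}$ explicitly. The model is invariant under translations in the spatial group $\bbZ/L\bbZ$ and in the time direction, so $K$ is a convolution operator and is diagonalized by the Fourier transform: in space by the $L$ discrete harmonics, in time by the Fourier transform on the time axis. Inverting mode by mode writes $K^{-1}(t,k)$ as a sum over the $L$ spatial harmonics of an explicit rational symbol integrated against the temporal frequency. This is exactly where Theorem~\ref{th:TASEP} enters: the maximal-entropy weights are those for which the spatial part of the symbol becomes the spectral projection onto $N$ consecutive harmonics, so that the restriction of $K^{-1}$ to a single time reproduces the single-time determinantal kernel already supplied by that theorem. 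By translation invariance the two-point function depends only on the differences $t_a-t_b$ and $k_a-k_b$, which is the form asserted in the statement.

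The third step is to pass to the diagonal limit. Turning on the tax $e^{-c}$ on the time-direction edges and simultaneously rescaling time ``freezes'' that direction and drives the discrete jump structure of the m-TASEP to the continuous-time Poisson picture of Theorem~\ref{t:cylinder}. In the Fourier representation of $K^{-1}$ the temporal factor degenerates, the contour integral over the time frequency collapses to an exponential temporal weight whose rate matches the Poisson intensity $e^h$ (with $h$ the entropy of Theorem~\ref{th:TASEP}), while the spatial projection onto the $N$ harmonics survives unchanged. Carrying out this limit inside the Fourier integral, and checking that it is uniform over the finite set of space-time points involved, produces the kernel $\tilde K(t,k)$ and the closed formulas~\eqref{eq:lim-K},~\eqref{eq:lim-K-even}.

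I expect the main obstacle to be this last step: the tax-plus-rescaling limit is singular, freezing one lattice direction while stretching the other, and one must show that the rescaled inverse Kasteleyn operator converges to a genuine space-time correlation kernel rather than to a degenerate or merely single-time object. Keeping the time-ordering encoded in the Kasteleyn signs consistent through the limit is delicate, and it is precisely here that the two descriptions in Theorem~\ref{t:cylinder} --- a fixed maximal-entropy spatial configuration transported by an independent Poisson clock --- must be matched against the determinantal formula, so that the factorization into the spatial projection and the temporal Poisson weight comes out correctly.
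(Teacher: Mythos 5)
Your overall architecture --- encode the evolution as a cylinder dimer model, apply Kasteleyn theory in the form~\eqref{kasteleyn1}, compute the inverse Kasteleyn matrix, then pass to the rescaled $\eps\to 0$ limit --- is the same as the paper's. The genuine problem is your second step, and it is not a technicality: mode-by-mode Fourier inversion on the bi-infinite cylinder produces the \emph{wrong} kernel. On the infinite graph the Kasteleyn operator has a whole family of inverses, and the one obtained by inverting the symbol and integrating over the temporal frequency is the unique inverse decaying in time; its splitting of the $L$ spatial modes is governed by whether the pole lies inside or outside the unit circle, i.e.\ by $|\lambda'_r|=|1+\eps\zeta_r|\gtrless 1$, which for small $\eps$ means the sign of $\Re\zeta_r$. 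The number of such modes is roughly $L/2$ and is \emph{independent of $N$}. The kernel of the theorem, by contrast, projects onto exactly $N$ consecutive harmonics for $t>0$ and the complementary $L-N$ for $t\le 0$ --- and, as the paper itself remarks after Theorem~\ref{beads}, that kernel in general grows exponentially in time, so it cannot coincide with the decaying translation-invariant inverse. The particle number $N$ enters the problem only through the boundary conditions $U_1,U_2$ of the finite-height graph, which your translation-invariant setup discards. Proposition~\ref{p:M-limit} is precisely the missing step: one solves the block recurrence for $W(\eps,\Mm,\Mp)^{-1}$, notes that the boundary subspaces $V_-$, $V_+$ have dimensions $L-N$ and $N$, and shows by a Perron--Frobenius argument on $\wedge^N B$ (in Pl\"ucker coordinates) that under iteration they converge to the spans $\hV_\pm$ of the $N$ largest and $L-N$ smallest eigenvectors of $B$, which is where the projectors $P_\pm$ come from. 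One could instead gauge $B\mapsto B/c'$ with $|\lambda'_{m+1}|<c'<|\lambda'_m|$, as the paper does later in~\eqref{eq:K-finite-c}, so that exactly $N$ gauged eigenvalues exceed $1$ in modulus and Fourier inversion then gives the correct splitting; but selecting that gauge already encodes the $N$-dependent answer and must itself be justified by the boundary-value analysis.

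Two further points. Your appeal to Theorem~\ref{th:TASEP} to ``identify the spatial part of the symbol'' is circular: the dimer model carries only the weights $1$ and $\eps$, ``maximal-entropy weights'' are not part of its data, and in the paper the logical arrow points the other way --- the single-time projection kernel is \emph{deduced} from Theorem~\ref{stones} (Corollary~\ref{cor:measure}), re-proving Theorem~\ref{th:TASEP}, not assumed. Second, the matching between the kernel limit (height to infinity first, then $\eps\to0$) and the process of Theorem~\ref{t:cylinder} (built with the limits in the opposite order) is not merely an ``expected obstacle'': the paper proves it as a separate lemma via the Gibbs property, conditioning on the configuration at levels $\pm\frac{\tau}{2\eps}$ so that the restricted measure is a mixture over boundary conditions, every term of which has the same $\eps\to 0$ limit. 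Without these repairs your argument does not yield the stated formulas~\eqref{eq:lim-K},~\eqref{eq:lim-K-even}.
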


This proposes an alternate way of establishing Theorem~\ref{th:TASEP} (see Corollary~\ref{cor:measure}). We also get a similar description for the jumping process:

\begin{theorem*}[Theorem~\ref{beads}]
For the limit process in Theorem~\ref{t:cylinder}, the common \emph{density} of the probability for the jumps at $(k_1,t_1),\dots,(k_n,t_n)$ is equal to the determinant 
\begin{equation}
\det (\tilde{K}(t_a-t_b, k_a - k_b - 1)_{a,b=1,\dots,n})
\end{equation}
for odd $N$ and to the determinant 
\begin{equation}
\det (\rt \tilde{K}(t_a-t_b, k_a - k_b - 1)_{a,b=1,\dots,n})
\end{equation}
for even $N$.
\end{theorem*}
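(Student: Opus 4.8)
The plan is to derive Theorem~\ref{beads} from Theorem~\ref{stones} within the same Kasteleyn framework, by recognizing the \emph{jumps} as a distinguished family of dimers in the space--time dimer model and computing their correlations through the very same inverse Kasteleyn operator $\tilde K$ that already governs the stone correlations. First I would recall the dimer encoding of the modified TASEP from the preceding sections: in the (mostly hexagonal) space--time graph, the event ``a stone is present at $(k,t)$'' is represented by one type of edge, while the event ``a jump occurs at $(k,t)$'' is represented by a different, ``diagonal'' edge $e_{k,t}$ carrying the transition of a stone from one hole to the next. The key point is that jump events are in bijection with the occurrences of these diagonal dimers, so that the common density of jumps is exactly the dimer correlation of the edges $e_{k_1,t_1},\dots,e_{k_n,t_n}$.

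Second, I would apply the standard Kenyon--Kasteleyn correlation formula: for a collection of edges $e_a$ joining a white vertex $w_a$ to a black vertex $b_a$, the density that all of them belong to the configuration equals $\bigl(\prod_a K(w_a,b_a)\bigr)\,\det\bigl(K^{-1}(b_a,w_{a'})\bigr)_{a,a'}$, where $K$ is the Kasteleyn matrix and $K^{-1}$ its inverse. Theorem~\ref{stones} is precisely this formula for the stone edges, identifying $\tilde K(t_a-t_b,k_a-k_b)$ with the relevant entries $K^{-1}(b_a,w_b)$ (the stone-edge weights being normalized to $1$). For the jump edges the same inverse operator appears, but now evaluated at the relative position of the endpoints of the diagonal edge $e_{k,t}$; since this edge connects lattice sites that are one step apart in the spatial (hole) coordinate, the matrix entry becomes $\tilde K(t_a-t_b,\,k_a-k_b-1)$, which accounts for the shift by $-1$ in the statement.

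Third, there remains the scalar prefactor $\prod_a K(w_a,b_a)$, equal to $\kappa^n$ where $\kappa$ is the common Kasteleyn weight of a jump edge; absorbing one factor of $\kappa$ into each row turns the product into $\det\bigl(\kappa\,\tilde K(t_a-t_b,k_a-k_b-1)\bigr)$. Computing $\kappa$ is where the parity of $N$ enters: on the cylinder the Kasteleyn orientation must be closed up around the periodic direction, and the resulting sign/phase attached to a diagonal jump edge equals $1$ when $N$ is odd and $\rt$ when $N$ is even. This is exactly the dichotomy between the two determinants, with the kernels $\tilde K$ given by~\eqref{eq:lim-K} and~\eqref{eq:lim-K-even}.

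The main obstacle I expect is precisely this parity-dependent phase $\rt$: keeping the Kasteleyn signs globally consistent on the cylinder --- and confirming that closing the orientation around the periodic loop multiplies the jump-edge weight by exactly $\rt$ for even $N$ while leaving it unchanged for odd $N$ --- requires careful bookkeeping of the winding and of the number of sign changes crossed, and is the step most prone to error. To pin down the constant unambiguously I would cross-check the diagonal case $n=1$ directly against the probabilistic description in Theorem~\ref{t:cylinder}: there the jump moments form a Poisson process of intensity $e^h$ coupled to the maximal-entropy measure, so the one-point jump density is determined by $h$ (Theorem~\ref{th:TASEP}) and by the probability that a jump is locally available; matching this value with $\tilde K(0,-1)$ (respectively $\rt\,\tilde K(0,-1)$) fixes the prefactor and thereby validates the sign computation.
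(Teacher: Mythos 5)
Your proposal follows essentially the same route as the paper: the paper also obtains Theorem~\ref{beads} by applying the Kasteleyn correlation formula~\eqref{kasteleyn1} to the ``jump'' edges, with the gauge factors $\alpha_{bw}$ chosen so that a jump edge carries phase $1$ for odd $N$ and $\rt$ for even $N$, the weight $\eps^n$ being absorbed into the density via the time rescaling $t=\eps j$, and the shift $k_a-k_b-1$ coming precisely from the jump edge joining the white vertex $(j,k)$ to the black vertex $(j+1,k+1)$. The only point you gloss over --- that at finite $\eps$ the Kasteleyn weight of a jump edge is $\eps$ (resp.\ $\rt\eps$) and only becomes a unit density after the $\eps\to 0$ limit with rescaled time --- is exactly how the paper converts probabilities into densities, so your argument is correct and matches the paper's proof.
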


This provides with an alternate viewpoint on the beads model considered by Boutillier~\cite{Boutillier} and Sun~\cite{Sun}, and especially on its correlation kernel. 

Finally, the relations between the jumping of stones and the dimer model also allows to provide an immediate (non-computational) explanation, why the Poissonization of the Plancherel measure is a determinantal one. Namely, this Poissonization can also be seen via the (passage to the limit in the) domino tilings on the hexagonal lattice, and the latter are known to be determinantal. This is done in Section~\ref{s:Young}.

\section{Points of view: maya diagrams, dominos, beads}\label{s:view} 

In this section we present different models, equivalent to a path in Young graph. 

We start with recalling the classical {\bf maya diagram.} Consider the real line with the holes at the points of $\bbZ + \frac{1}{2}$. 
In these holes (pictured here as white circles) black stones can be placed, each hole containing no more than one stone. 

Then one can encode the outer boundary of a YD (drawn in the Russian notation) in the following way: if the edge goes down (reading it from left to right), one places a black stone in the corresponding hole, leaving the hole empty otherwise. See Fig.~\ref{f:Maya-def}.

\begin{figure}[!h!]\label{YT}
\begin{center}
\includegraphics[scale=1]{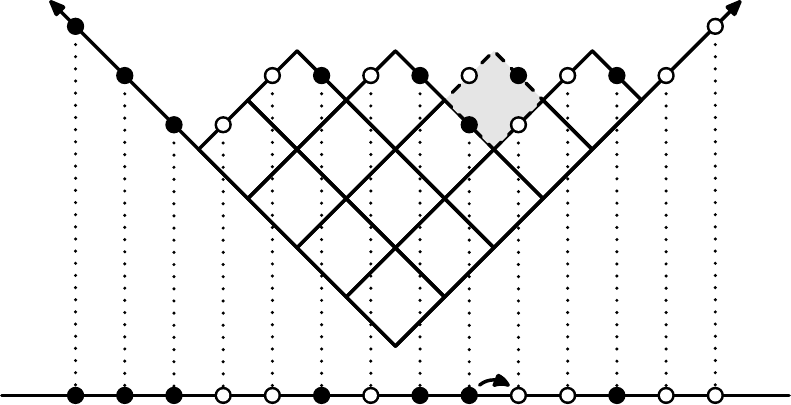}
\end{center}
\caption{Transforming a Young diagram into a maya one; an addition of a new cell (filled square) corresponds to a jump of one of the stones (shown by an arrow).}\label{f:Maya-def}
\end{figure}

\begin{figure}\label{p:models}
\begin{center}
\mbox{} \hfill 
\includegraphics[scale=0.45]{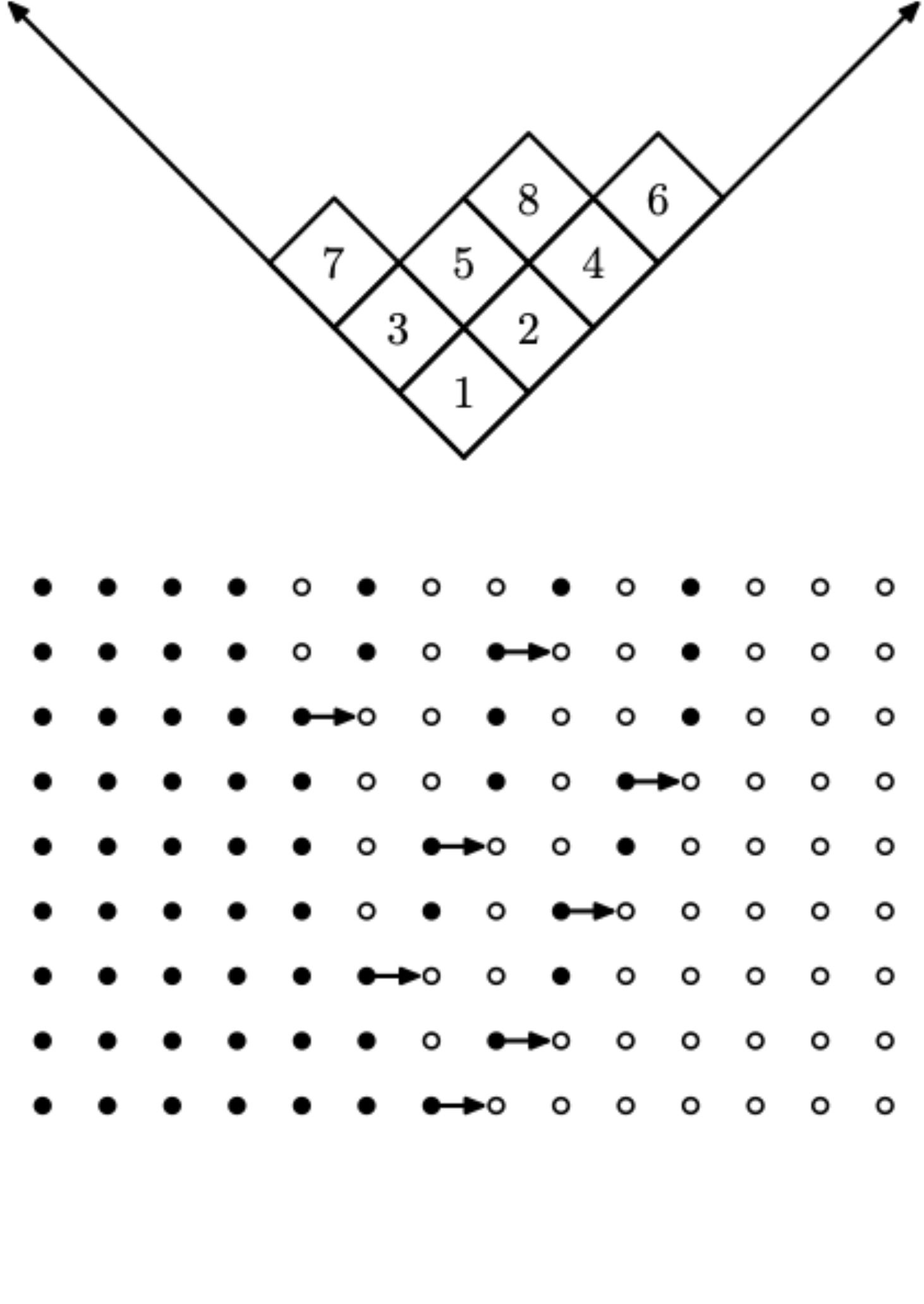} \hfill \includegraphics[scale=0.8]{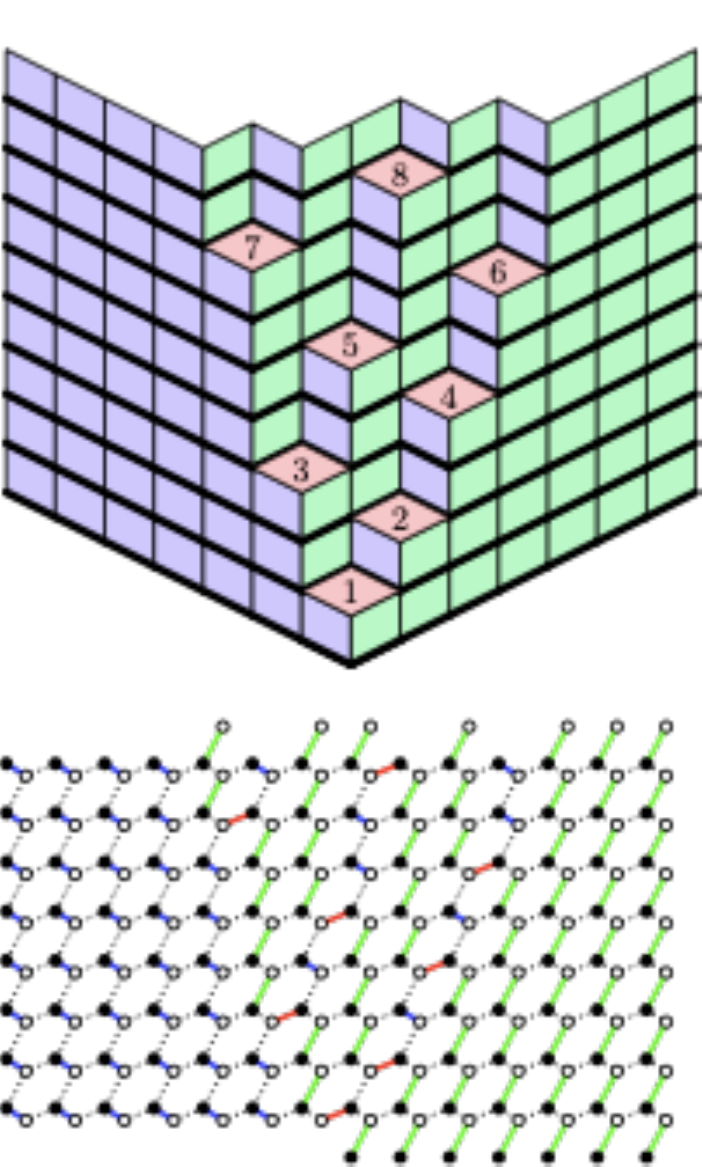} \hfill \includegraphics[scale=0.3]{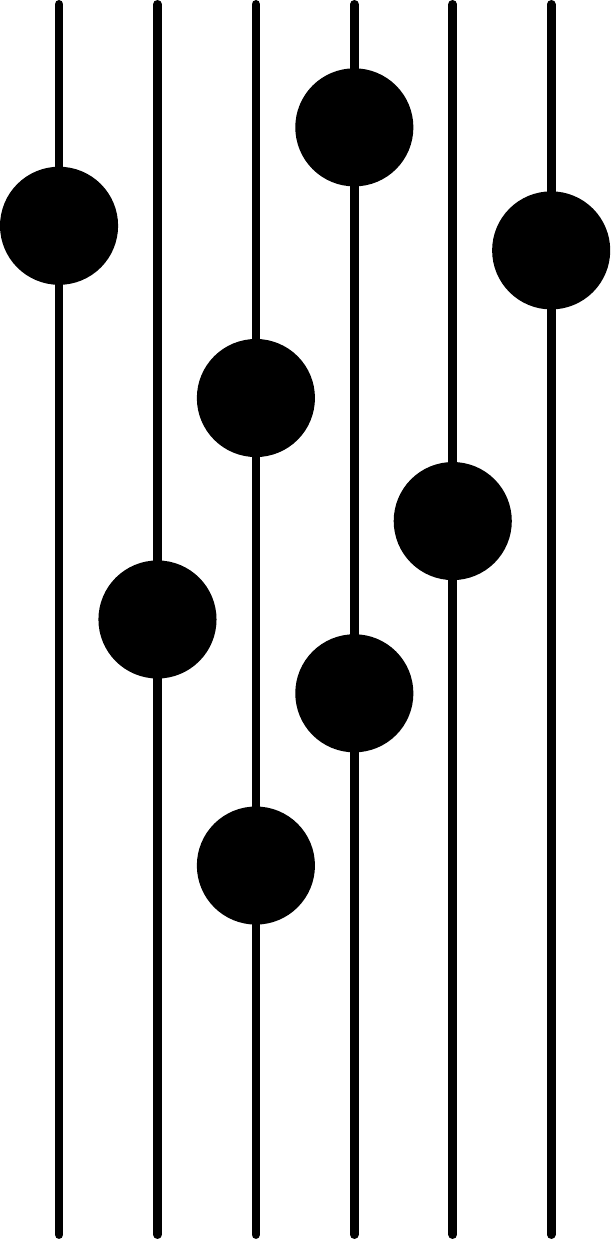}
\hfill \mbox{}
\end{center}
\caption{A Young Tableau (top left) and its encodings: maya diagram evolution (bottom left), stacked YD and lozenge tiling (top right), dimer configuration (bottom middle), beads model (bottom right).}\label{f:encodings-maya}
\label{f:encodings-tilings}
\end{figure}

One can easily see that the ``addition of a cell (provided that it can be added)'' operation in terms of YD corresponds to ``moving the stone to the next hole on its right (provided that it is empty)'' in maya diagrams' evolution. Indeed, under the addition (or removal) of a cell, the adjacent 'up' and 'down' edges on the YD border are interchanging, thus moving the corresponding stone into the empty hole next to it on the right. (See Fig.~\ref{f:Maya-def} and Fig.~\ref{f:encodings-maya}, bottom left.)

\begin{rem}\label{area2}
Note that the YD here naturally has cells of area $2$, instead of $1$ (so that their edges project to length~$1$ intervals on the $x$ axis), 
and the YD itself has area equal to $2n$. This explains why the formulae~\eqref{eq:t-L} and especially~\eqref{eq:h-G} become nicer in the corresponding normalizations.
\end{rem}

Another classical object is stacked Young diagrams. Given a path in the Young graph, one can stack the complements to the corresponding YDs, putting each of them on the top of the previous one, and considering them to be made of unit cubes instead of unit squares. This provides a 3D object, whose 3D projection gives a {\bf lozenge tiling} by lozenges corresponding to the three possible faces of the cubes; see Fig.~\ref{f:encodings-tilings}, top right. Lozenge tilings have also appeared in the works of Morales, Pak, Panova and Tassy~\cite{MPP1, MPP2, MPP3, MPP5, MPP6}, as they used an approach based on the {\bf excited diagrams}, but as this is not the one we are going to use, we will not go into further details.

Still classically, a lozenge tiling can also be seen as a {\bf dimer configuration} on the corresponding bipartite graph (a subset of the hexagonal lattice), and thus such tilings can be counted with help of the Kasteleyn theorem via the corresponding determinant. However, this approach for counting YTs has two disadvantages: first, not all the lozenge tilings correspond to the paths (one can add none or many cells on the same level), and its upper and lower boundaries depend on the shape of the skew YD that is studied (that is somewhat inconvenient).

To address the second issue, we thus will return back to the evolution of maya diagrams. We note that each such evolution can be 
encoded (in a different way!) by dimer configuration on a graph on a hexagonal lattice. Namely, the evolution of a maya diagram happens on a square lattice with the space and time coordinates $x$ and $t$ respectively. Consider all these points as black vertices, and inside each square let us add a white one. We will connect the white vertex in the square $\{n,n+1\}\times \{t,t+1\}$ to the vertices $(n,t)$, $(n,t+1)$ and $(n+1,t+1)$; see Fig.~\ref{code}.

In terms of the encoding, using the first of these edges means that there is no stone at $(n,t)$, the second one is that a stone is present and stays at this moment where it was, and the last one that the stone that was present at $(n,t)$ has jumped at this moment to the next hole. This process is illustrated on the bottom middle of Fig.~\ref{f:encodings-tilings} (red color corresponds to the edges where a stone jumps, and thus a cell is added, green edges encode empty holes, blue ones the non-jumping stones).

Note that this encoding is actually different from the one that corresponds to the stacked YDs. Indeed, though some dimer configuration via a ``backward translation'' correspond to none or many stones jumping at ones, we see that a stone here cannot jump farther than to the next hole (a possibility that appear in stacked YDs lozenge encoding), and the upper and lower bound are almost horizontal, with only hanging (green) edges describing the boundary conditions (namely, the placement of empty holes at the initial and final maya diagrams).

\begin{figure}[!h!]\label{encode}
\begin{center}
\includegraphics[scale=1]{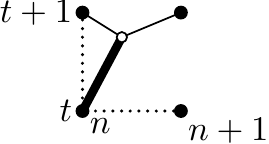} \quad 
\includegraphics[scale=1]{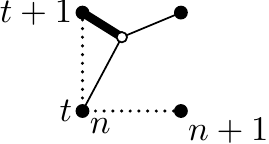} \quad 
\includegraphics[scale=1]{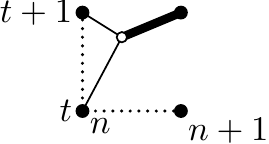}
\end{center}
\caption{Encoding: at the moment $t$ in the hole $n$ there is: no stone (left), stone that stays in the hole (center), stone that jumps to the next hole (right).}\label{code}
\end{figure}

A way of addressing the first aforementioned issue, the possibility of having two or zero jumps on the same level, is by increasing the number of levels. Namely, instead of the number of levels equal to the number of jumps~$n$, take it equal to $M\gg n$. Then, to any YT corresponds to exactly ${M \choose n}$ different configurations with at most one jump per level. On the other hand, the number of the configurations where at least two jumps happen on the same level is easily upper bounded by $\const \cdot {M \choose n-1}$, where the constant does not depend on~$M$. Hence, such configurations' fraction among all the configurations tends to 0 as $M\to\infty$. 

Contracting the picture $\const\cdot M$ times vertically and passing to the limit as $M\to\infty$, we get a continuous-time model. On one hand, the above arguments easily describe it in the initial terms: it can be obtained from independent pair of a uniform choice of a uniformly distributed YT 
(describing the places of the jumps) and a $n$-point independent choice on the time interval (describing the [rescaled] moments when these jumps occur).

On the other hand, what we thus get is a [local version of] so-called the {\bf beads model} (see Fig.~\ref{f:encodings-maya}, bottom right). It was studied in, for instance~\cite{Sun,Boutillier}; its object is a discrete subset of $\bbR\times \bbZ$, with the property that between (in the $\bbR$-direction) any its two consecutive points (``beads'') on the line $\bbR\times \{n\}$  there are points on both lines $\bbR\times \{n-1\}$ and $\bbR\times \{n+1\}$. This is exactly what we get for the placements of 
the jump sites: between any two jumps at the same place there should be the jumps in both neighboring sites; plus, for the local part of the model, the beads should satisfy the ``boundary conditions''. We will postpone the discussion on this dimer model till its use in Sec.~\ref{s:TASEP}.

\section{In search of the answer}\label{s:cut}

\subsection{Cutting the diagram}

This section is devoted to a deduction of a general form of the functional that appears in Conjectures~\ref{Conj1} and~\ref{Conj2}. We would like to emphasise that the this reasoning \emph{does not} rigorously prove the existence of such a functional. However, its existence is guaranteed by the work of Sun~\cite{Sun}, and we find it interesting that from mere fact of its existence one can deduce its explicit form by pretty straightforward (and not too technically complicated) arguments.

\subsubsection{``Horizontal'' cut}

The first step is a ``horisontal cut'' of the diagram. Namely, let YDs $\lambda' \subset \lambda$ be given. Choose a number $k$ and a sequence of ``intermediable sizes'' $n_0<n_1<\ldots<n_k$, where $|\lambda'| = n_0$, $|\lambda| = n_k$. Then the total number of paths in the Young graph from $\lambda'$ to $\lambda$ can be counted by splitting their set depending on the YDs passed at these sizes:

\begin{equation}
F^{\lambda/\lambda'} = \sum_{(\lambda_0, \ldots, \lambda_k)\in \mathcal{A}^{\lambda/\lambda'}_{n_1, \dots, n_{k - 1}}} F^{\lambda_k/\lambda_{k - 1}}\cdot \ldots \cdot F^{\lambda_1/\lambda_0}, \label{sum}
\end{equation}
where
$$
\mathcal{A}^{\lambda/\lambda'}_{n_1, \ldots, n_{k - 1}} = \{(\lambda_0, \ldots, \lambda_k) | \lambda' = \lambda_0 \subset \lambda_1 \subset \lambda_2 \subset \ldots \subset \lambda_k = \lambda, \forall i = 1, \dots, k - 1 : |\lambda_i| = n_i\}.
$$

Now, the sum \eqref{sum} is comparable with its maximum summand as it differs from the latter by the factor at most the number of summands. This number, in its turn, can be estimated as 

$$
|\mathcal{A}^{\lambda/\lambda'}_{n_1, \ldots, n_{k - 1}}| \le \prod | \bbY_{n_i} | \le \exp \Bigl( \sum_{i = 1}^{k - 1} \pi \sqrt\frac{2n_i}{3}\Bigr),
$$
where the latter inequality is due to Hardy-Ramanujan formula, $|\bbY_n|\sim \frac{1}{4n\sqrt{3}} \exp \bigl( \pi \sqrt\frac{2n}{3}\bigr)$.

Thus, we get

$$
\log F^{\lambda/\lambda'} - \log \max_{(\lambda_0, \dots, \lambda_k)\in \mathcal{A}^{\lambda/\lambda'}_{n_1, \dots, n_{k - 1}}} F^{\lambda_k/\lambda_{k - 1}}\cdot \ldots \cdot F^{\lambda_1/\lambda_0} \in [0; k\sqrt n_k].
$$

Choose $k$ much smaller than $\sqrt n_k = \sqrt{|\lambda|}$ and the sizes $n_1, \ldots, n_{k-1}$ to be ``equally spaced'' on $[n_0, n_k]$ (that is, let $n_i = n_0 + [\frac{i}{k}\cdot (n_k - n_0)])$.

It is natural to expect that for a generic skew YD of the form $\lambda/\lambda'$, its level curves at these moments slice the (rotated $\pi/4$) diagram into long and thin slices. After rescaling they should be close to the corresponding graphs $y = g(t_i, x)$, where $t_i = \frac{n_i - n_0}{n_k - n_0} \approx \frac{i}{k}$. The (total) contribution of the paths that are ``non-optimal'' will be neglectable.

\begin{figure}[!h!]
\begin{center}
\includegraphics[scale=0.4]{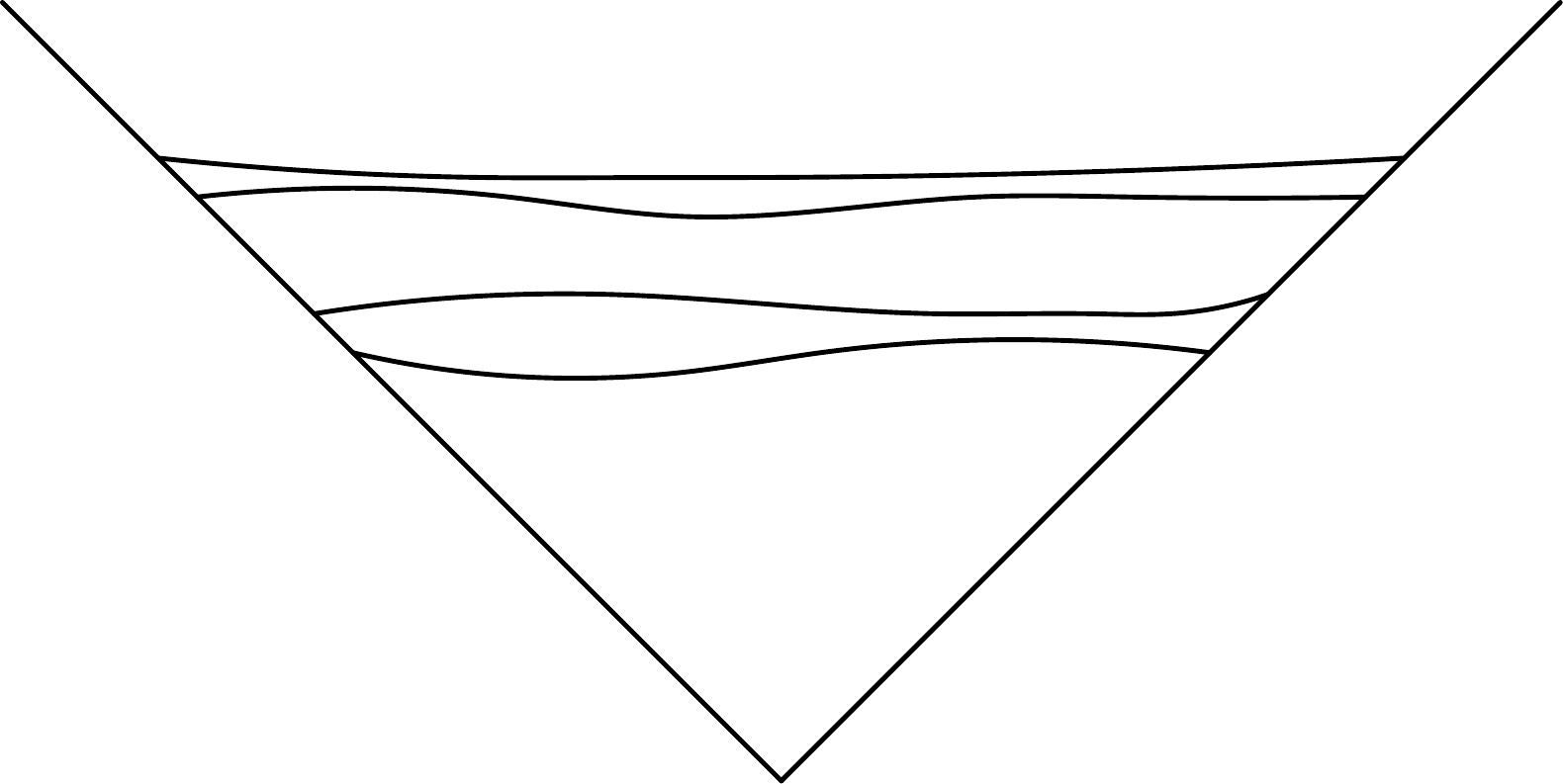} \qquad \includegraphics[scale=0.8]{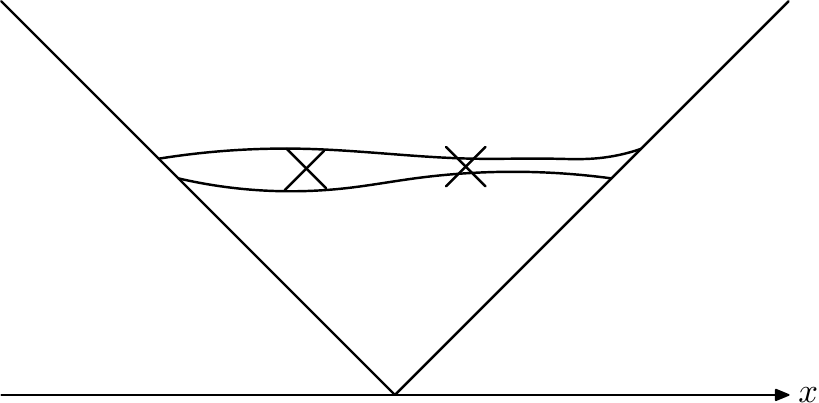} 
\end{center}
\caption{Left: ``horizontal'' cut, shapes $\lambda_i$ at the corresponding intermediate moments $t_i$. \newline
Right: ``vertical'' cut of a horizontal ``slice''~$\lambda_i/\lambda_{i-1}$.}
\end{figure}

We get an approximation (up to $o(n)$) for $\log  F^{\lambda/\lambda'}$ as
$$
\sum_{i = 1}^k \log F^{\bar \lambda_i/\bar \lambda_{i - 1}},
$$
where $(\bar \lambda_0, \bar \lambda_1, \ldots, \bar \lambda_k)$ is the index corresponding to the maximizing summand.

The same applies to the setting of Conjecture~\ref{Conj2}: given a function $g_0$, we get an approximation for $\log \nZ_{\eps, g}^{\lambda_N/\lambda'_N}$ as
$$
 \sum_{i = 1}^k \log F^{\bar \lambda_i/\bar \lambda_{i - 1}},
$$
where maximum is now taken over the set of $\lambda_i$ with the additional assumption of the (rescaled) outer boundary of~$\lambda_i$ belonging to the $\eps$-neighborhood of~$g(t_i)$.

\subsubsection{``Vertical'' cut}

Now, let us cut each ``thin'' diagram  $\bar \lambda_i/\bar \lambda_{i - 1} = D_i$ ``vertically'', choosing some points $p_{i,1}, \ldots, p_{i, m-1}$ inside~$D_i$. Let $R_{i,j}^-$ be the set of cells of $D_i$ to the lower left of $p_{i,j}$, $R_{i,j}^+$ to the upper right, and $D_{i,0}, \ldots, D_{i, m}$ the connected components of 
$$D_i \setminus \bigcup_{j = 1}^m (R_{i,j}^- \cup R_{i,j}^+) =: \tilde D_i.$$ 
One can also see $\tilde D_i$ as a skew YD:
$$\tilde D_i=\lambda_{i}^- /\lambda_{i - 1}^+, \,
\text{ where } \, \lambda_i^+ = \bar \lambda_i \cup \bigcup_{j} R_{i,j}^-, \quad \lambda_{i - 1}^- = \bar \lambda_i \setminus \bigcup_{j} R_{i,j}^+.$$

\begin{figure}[!h!]
\begin{center}
\includegraphics[scale=0.5]{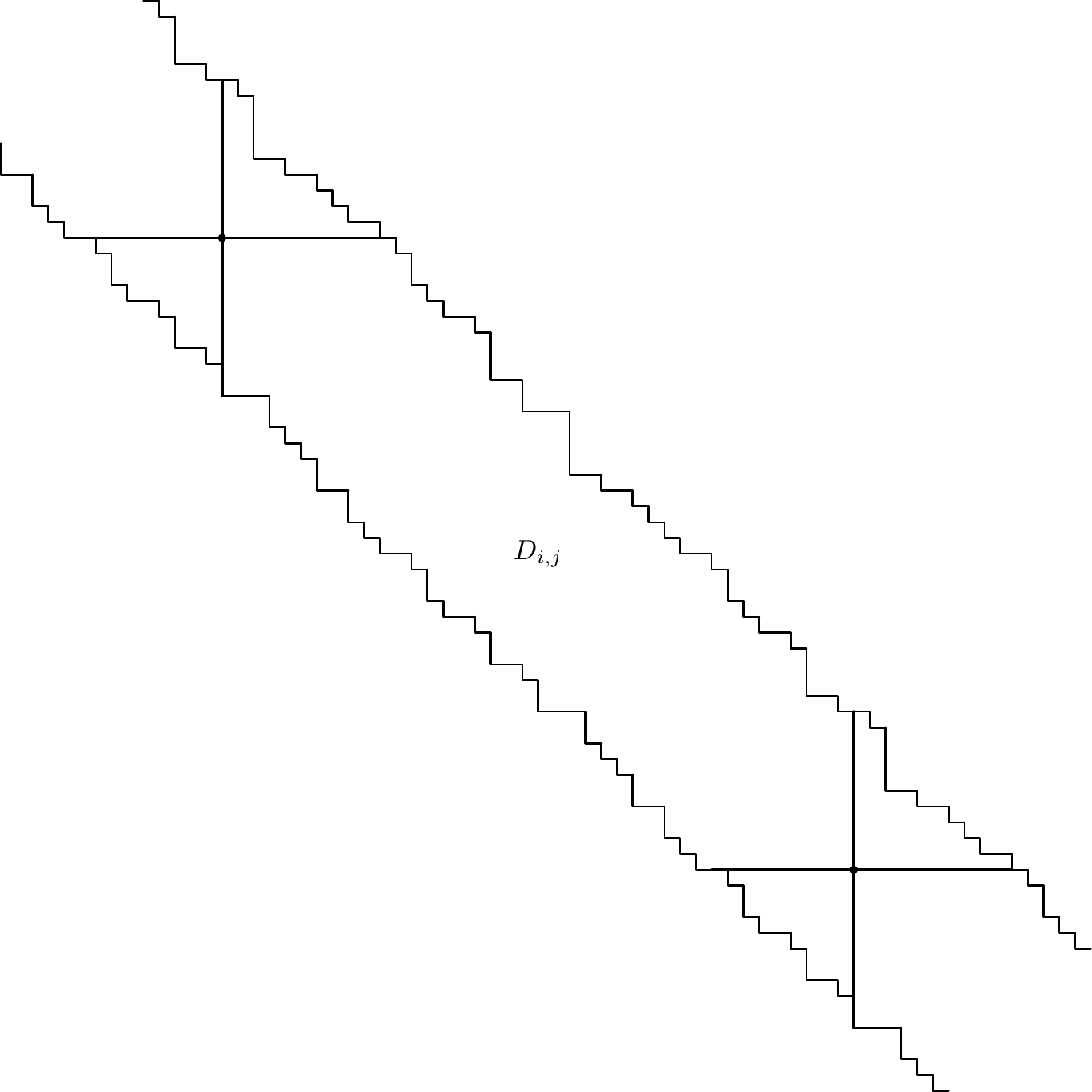} \quad 
\includegraphics[scale=0.5]{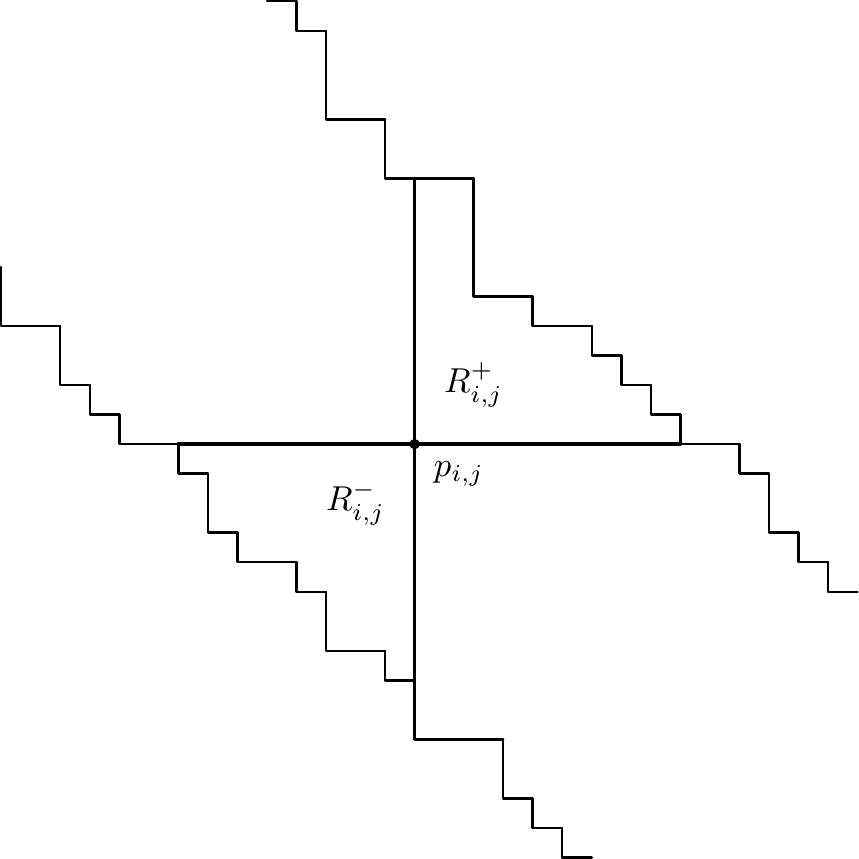} 
\end{center}
\caption{``Vertical'' cut in French notation: the domains $D_{i,j}$ (left), points $p_{i,j}$ and removed corners $R_{i,j}^{\pm}$ (right).}
\end{figure}

Consider then the map from the set of skew YT of the form $D_i$ to those of the form $\tilde D_i$: the cells are added in the same order with the parts $\bigcup_{j = 1}^m (R_{i,j}^- \cup R_{i,j}^+)$ ignored. This map is surjective: any order for $\tilde D_i$ can be completed by first adding all the cells from all $R_{i,j}^-$, then $\tilde D_i$ itself, then all $R_{i,j}^+$. On the other hand, the maximum number of preimages does not exceed $n^{\sum_j |R_{i,j}^- \cup R_{i,j}^+|}$, as we are loosing $\sum_j |R_{i,j}^- \cup R_{i,j}^+|$ numbers that do not exceed $n = n_k - n_0$. Hence, one has

$$
\log F^{D_i} - \log F^{\tilde D_i} \in [0, \sum_j |R_{i,j}^- \cup R_{i,j}^+| \cdot \log n],
$$
and thus,
\begin{equation}
\sum_{i} \log F^{D_i} - \sum_{i}\log F^{\tilde D_i} \in [0, \log n \cdot \sum_i\sum_j |R_{i,j}^- \cup R_{i,j}^+|]. \label{sum1}
\end{equation}

For a large $k$ sliced domains $D_i$ can be expected to be of width $O(\sqrt n/k)$, and thus the cutting regions $R_{i, j}^\pm$ of area $O((\sqrt n/k)^2) = O(n/k^2)$. Taking $m$ such regions per slice, we get a total effect of $O(\frac{n}{k^2}\cdot k \cdot m \cdot \log n)$ in the right side of \eqref{sum1}, and after choosing $m = o(\frac{k}{\log n})$ this error does not exceed $o(n)$.

Note now that the orderings on different components $D_{i,j}$ of $\tilde D_{i}$ are completely independent. That is, let $YT^D$ stay for the (skew) standard YT of the shape $D$. Consider the map 
$$
P_i:YT^{\tilde D_{i}} \to \prod_j YT^{D_{i,j}},
$$
defined by restricting order of appearance of cells in $\tilde D_i$ on each subdiagram $D_{i,j}$. It is easy to see that this map is exactly $R_i$-to-one, where $R_i$ is the multinomial coefficient
$$
R_i = \binom{|\tilde D_i|}{|D_{i,1}|, \ldots, |D_{i,m}|} = \frac{|\tilde D_i|!}{|D_{i,1}|! \dots  |D_{i,m}|!}.
$$
Hence, 
$$
\log F^{\tilde D_i} = \sum_j \log F^{D_{i,j}} + \log \frac{|\tilde D_i|!}{|D_{i,1}|! \dots  |D_{i,m}|!}.
$$

Meanwhile, from Stirling's formula we have 
$$
\log \binom{|\tilde D_i|}{|D_{i,1}|, \ldots, |D_{i,m}|} = \sum_{j = 1}^m |D_{i,j}| \cdot \left( - \log \frac{|D_{i,j}|}{|\tilde D_i|}\right)+o(|D_i|),
$$
(as the sizes of $|D_{i,j}|$ tend to infinity at least as $\log n$); we thus get an approximation
\begin{equation}
\log F^{\lambda/\lambda'} = \sum_i \sum_j \left[ \log F^{D_{i,j}} + |D_{i,j}| \cdot (- \log \frac{|D_{i,j}|}{|\tilde D_i|}) \right] + o(n). \label{logsum}
\end{equation}

Again, instead of all the paths we can consider only the paths that ``resemble'' a graph of a function~$g$. For such a path $\bar \lambda_1, \ldots, \bar \lambda_{k - 1}$, the skew YDs $D_{i,j}$ look like parallelograms of horizontal length $\sqrt n \cdot (x_{i,j} - x_{i,j-1})$ (where the vertical point  $p_{i,j}$ has $x$-coordinate $x_{i,j}$) 
and of width $\sqrt n \cdot g'_t(t_i, x_{i,j})$ and with the slope $\tan \alpha = g'_x (t_i, x_{i,j})$.

\begin{figure}[!h!]\label{encode1}
\begin{center}
\includegraphics[scale=0.7]{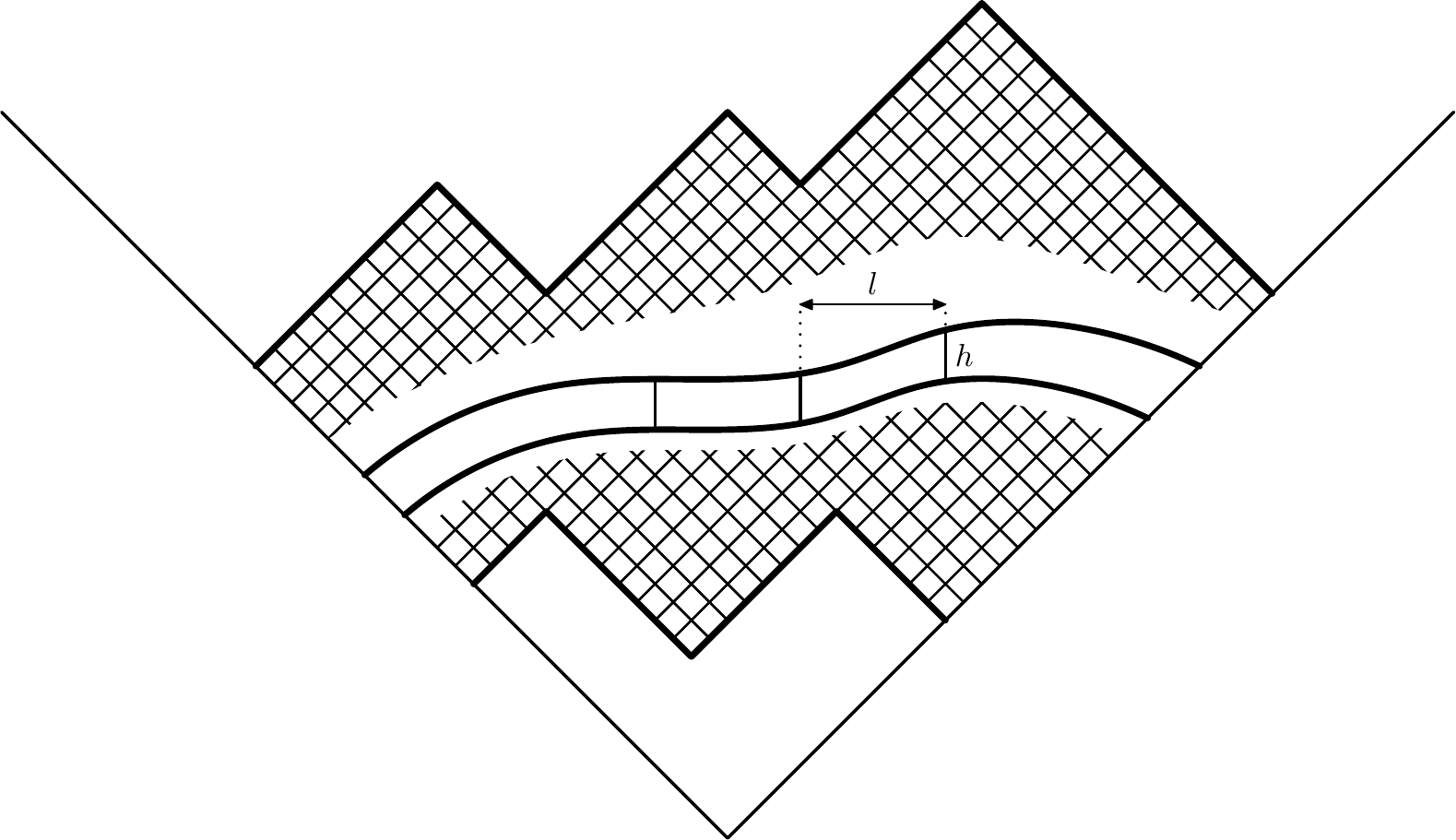} \qquad
\includegraphics[scale=0.9]{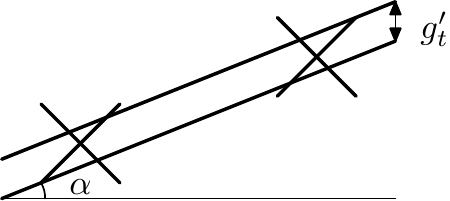}
\end{center}
\caption{``Vertical'' cuts}
\end{figure}

Hence, to transform the formula \eqref{logsum} to the desired integral form, we have to estimate the logarithmic number $Z(\alpha, h, l) = \log F^{\Pi_{\alpha, h, l}}$ of skew YT in a parallelogram of length $l$, height $h$, where $1\ll h\log h \ll l$, going under a slope~$\tan \alpha$.

\subsubsection{Parallelograms approximation}
The same arguments as before imply the following two conclusions should for large $l \gg h\log h \gg 1$:
\begin{itemize}
\item $Z(\alpha, h_1 + h_2, l) \approx Z(\alpha, h_1, l)  + Z(\alpha, h_2, l) $~--- from adding an additional ``intermediate moment'', cutting the parallelogram ``horizontally'';
\item $Z(\alpha, h, l_1 + l_2) \approx Z(\alpha, h, l_1) + Z(\alpha, h, l_2) + \log \binom{h(l_1 + l_2)}{hl_1,~hl_2}$~--- from adding an additional midpoint, ``vertically cutting in independent domains''.
\end{itemize}

The latter approximation can be further rewritten as
$$
Z(\alpha, h, l_1 + l_2) \approx Z(\alpha, h, l_1) + Z(\alpha, h, l_2)  + h (l_1 + l_2) \log (l_1 + l_2) - h l_1 \log l_1 - h l_2 \log l_2.
$$

Considering the difference $\tilde Z(\alpha, h, l) := Z(\alpha, h,l) - h l \log l$, we see that it is thus (approximately) additive in both $h$ and $l$.
Hence, it is natural to expect it to behave like
$$
\tilde Z (\alpha, h,l) = A(\tan \alpha) h l + o(hl),
$$
where $A(\tan \alpha)$ is a constant, depending only on the slope $\tan \alpha$. Thus, we get a prediction
\begin{equation}\label{eq:Z-A}
Z(\alpha, h,l) = h l \log l + A(\tan \alpha) h l + o(hl).
\end{equation}

As a concluding remark, note that due to the vertical symmetry (in the Russian notation) the function $A(\cdot)$ should be even. 

\subsubsection{Integral formula}\label{sss:integral}

Plugging~\eqref{eq:Z-A} back to \eqref{logsum}, we get an asymptotic expression for the number of $g$-shaped skew~SYT:
\begin{equation}
\log \nZ_{\eps,g}^{\lambda/\lambda'} = \sum_{i,j} |D_{i,j}| \cdot \left[ \log l_{i,j} + A(\tan \alpha_{i,j}) + \log \frac{|D_i|}{|D_{i,j}|}\right] + o(n),.  \label{logsum1}
\end{equation}
Here $o(n)$ is understood in the sense of a double limit as $\lim_{\eps\to 0} \limsup_{n\to\infty}$, 
we denote by $l_{i,j}$ is the (horizontal) length of the ``parallelogram'' $D_{i,j}$ and by $\tan \alpha_{i,j} = g'_x(t_i, x_{i,j})$ its slope. 
The height $h_{i,j}$ of $D_{i,j}$ after rescaling by $\sqrt n$ can be approximated as
$$
\frac{h_{i,j}}{\sqrt n}\approx g'_t (t_i, x_{i,j})\cdot (t_i - t_{i - 1});
$$ 
as $t_i - t_{i-1} = \frac{n_i - n_{i - 1}}{n}$, we get
$$
h_{i,j} \approx g'_t(t_i, x_{i,j}) \cdot \frac{n_i - n_{i - 1}}{\sqrt n}.
$$ 

As $|D_{i,j}|\approx l_{i,j}h_{i,j}$, $|D_i| = n_i - n_{i - 1}$, we can write the expression in the right hand side of~\eqref{logsum1} as
\begin{multline}
\log l_{i,j} + A(\alpha_{i,j}) + \log \frac{|D_i|}{|D_{i,j}|} \approx \\
\approx \log l_{i,j} + \log{(n_i - n_{i - 1})} - \log{\left( l_{i,j} \frac{n_i - n_{i - 1}}{\sqrt n} g'_t(t_i, x_{i,j})\right)} + A(g'_x(t_i, x_{i,j})) \approx \\
\approx \frac{1}{2} \log n - \log g'_t(t_i, x_{i,j}) + A(g'_x(t_i, x_{i,j})).
\end{multline}

Multiplying by $|D_{i,j}|\approx (t_i - t_{i - 1})(x_{i,j} - x_{i,j-1})\cdot g'_t(t_i, x_{i,j})\cdot n$, and adding up, we finally get the desired
\begin{multline}
\log \nZ_{\eps,g}^{\lambda/\lambda'} = \sum_{i,j} |D_{i,j}| \cdot \left[ \frac{1}{2} \log n - \log g'_t(t_i, x_{i,j}) + A(g'_x(t_i, x_{i,j}))\right] = \\
= \frac{1}{2} n \log n + n \left[ \iint (- \log{g'_t} + A(g'_x))\cdot g'_t \,dx \,dt + o(1)\right].
\end{multline}
This is exactly the statement of Theorem~\ref{Conj2}. Taking the maximum over the possible shapes~$g$ of the skew SYT and referring to the variational principle then implies Theorem~\ref{Conj1}. Indeed, if $g_0$ is the maximizing function for the functional $\mL$ (it is easy to see that it is concave, so $g_0$ is unique), any other $g$ will correspond to the exponentially smaller number of paths.

We conclude this paragraph by reminding that all the arguments therein are non-rigorous, serving as a good motivation for these conjectures, but not as a rigorous proof.

\subsection{Differential equation}\label{s:functional}

The discussion on the previous section implies that the number of $g$-shaped skew YT of area~$n$ should be asymptotically diven by the formula 
$$
\log \nZ_{\eps,g}^{\lambda/\lambda'} = \frac{1}{2} n \log n + n \cdot \mL[g] + o(n),
$$
where
\begin{equation}\label{eq:L}
\mL[g]=\int\limits_0^1\int\limits_\bbR(-g'_t \log g'_t + g'_t A(g'_x)) \, dx\, dt,
\end{equation}
and the function $A(\cdot)$ is yet to be determined. Also, the limit shape of a skew YT of a given large form should be an extremal of this functional.

\begin{rem}
This is not an immediate conclusion, as we have used that the parameter $t$ corresponds to the part of area filled, and hence the allowed functions $g$ are only those satisfying $\forall t \in [0;1]: \int (g(t, x) - g(0,x)) \, dx = t,$ or, equivalently, for sufficiently smooth functions,  
\begin{equation}\label{eq:A}
\forall t \in [0;1]: \int g'_t(t, x) , dx = 1.
\end{equation}

Thus $g$ is immediately an extremum of $\mL$ only on the space of functions, given by~\eqref{eq:A}. However, for any (increasing in $t$) function $g(t,x)$ we can consider its time reparametrization $\tau = \phi (t)$:

$$
\phi(t) = \int\limits \bbR (g(t, x) - g(0,x))\, dx,
$$
and the corresponding function $\tilde g(\tau, x) = g(\phi^{-1}(t), x)$.

It is easy to see that the $A$-part of the functional $\mL$, that is, $\iint A(g'_x) g'_t \, dx \,dt$ stays unchanged by such a reparametrization. Meanwhile, 

$$
\iint - \tilde g'_t \log \tilde g'_t \,dx \,dt = \iint -g'_t \log g'_t \,dx\,dt - \int \phi' \log\phi' \,dt, 
$$
and as $- \int \phi' \log\phi' \,dt \ge 0$, and strictly $>0$ for all non-identity $\phi$ (as $\phi(0) = 0, \phi(1) = 1$ and Jensen inequality), the maximum of $\mL$ is attained on a function $g$ with uniform growth.

\end{rem}

It turns out that these observations suffice to reconstruct $A(\cdot)$.

Namely, as we have mentioned in the introduction, a skew YT of a shape following from Vershik-Kerov-Logan-Shepp asymptotics is given by a family of its rescalings:

\begin{equation}
\Omega(t,x) = \sqrt{t} \cdot \Omega\left( \frac{x}{\sqrt t}\right). \label{VKLS} 
\end{equation}

This is an extremal of a functional $\mL$, and thus it should satisfy the Euler-Lagrange equations:

\begin{equation}
\dd t L'_{g'_t}(g'_t, g'_x) + \dd x L'_{g'_x}(g'_t, g'_x) = 0, \label{EuLa} 
\end{equation}
where 

\begin{equation}
L(p_t, p_x)  = - p_t \log {p_t} + A(p_x) \cdot p_t. \label{lagrangian} 
\end{equation}

As $\Omega(t,x)$ given by \eqref{VKLS} is an explicit function, we can plug it in \eqref{EuLa} and interpret it as a differential equation for unknown $A(\cdot)$.

\begin{pro}
Let $A: [-1, 1] \rightarrow \bbR$ be an even function, $C^2$-smooth on $(-1, 1)$. Then $\Omega(t,x)$ satisfies the Euler-Lagrange equation for the functional $\mL[\cdot]$ if and only if
$$
A(p_x) = \log \cos \frac{\pi p_x}{2} + C,
$$
where $C$ is a constant.
\end{pro}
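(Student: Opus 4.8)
The plan is to treat the Euler--Lagrange equation \eqref{EuLa} as an identity that the explicit self-similar field \eqref{VKLS} must satisfy, and to read it off as an ordinary differential equation for the unknown $A$. First I would record the two partial derivatives of the Lagrangian \eqref{lagrangian}, namely $L'_{p_t}=-\log p_t-1+A(p_x)$ and $L'_{p_x}=A'(p_x)\,p_t$. Writing $u:=g'_t$ and $v:=g'_x$, expanding \eqref{EuLa} and combining the two cross terms through the symmetry of mixed partials $g''_{tx}=g''_{xt}$, one obtains
\[
-\frac{u_t}{u}+2\,A'(v)\,v_t+A''(v)\,v_x\,u=0.
\]
This is the form into which the field \eqref{VKLS} will be substituted.

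Second, I would exploit self-similarity. Set $s:=x/\sqrt t$. Differentiating the VKLS profile gives the pleasant simplification $\Omega'(x)=\frac{2}{\pi}\arcsin\frac{x}{\sqrt2}$ (the two awkward terms cancel), so that $v=\Omega'(s)$ depends on $s$ alone, while $u=t^{-1/2}\psi(s)$ with $\psi(s)=\frac12\bigl(\Omega(s)-s\,\Omega'(s)\bigr)$ and $\psi'(s)=-\frac12 s\,\Omega''(s)$. Substituting into the identity above, each of the three terms turns out to carry a single factor $t^{-1}$, so after multiplying by $t$ the time variable disappears and one is left with an ODE in $s$ tying together $A'(v)$, $A''(v)$ and explicit functions of $s$. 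The efficient substitution is $\theta:=\frac{\pi v}{2}$, i.e. $s=\sqrt2\,\sin\theta$, under which $\Omega''(s)=\frac{\sqrt2}{\pi\cos\theta}$ and $\psi(s)=\frac{\sqrt2}{\pi}\cos\theta$; the reduced equation then collapses to
\[
\frac{2}{\pi^2}\,A''(v)-\frac{2\tan\theta}{\pi}\,A'(v)+\frac12\,\frac{\cos 2\theta}{\cos^2\theta}=0.
\]

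Third, I would solve this. Putting $w:=A'(v)$ and taking $\theta$ as the independent variable turns it into the first-order linear equation $\dot w-2\tan\theta\,w=-\frac{\pi}{2}(1-\tan^2\theta)$, with integrating factor $\cos^2\theta$; integrating $\frac{d}{d\theta}\bigl(\cos^2\theta\,w\bigr)=-\frac{\pi}{2}\cos 2\theta$ gives $w=-\frac{\pi}{2}\tan\theta+C_1/\cos^2\theta$. Here the hypothesis that $A$ is even enters decisively: it forces $A'$ to be odd in $v$, hence odd in $\theta$, which rules out the even homogeneous term $C_1/\cos^2\theta$ unless $C_1=0$. (The $C^2$-smoothness on the open interval $(-1,1)$ alone would not exclude it, since $1/\cos^2\theta$ is perfectly smooth there; it is evenness that does the job.) With $C_1=0$ one integrates $A'(v)=-\frac{\pi}{2}\tan\frac{\pi v}{2}$ to $A(v)=\log\cos\frac{\pi v}{2}+C$, and since every step above is reversible this also settles the ``if'' direction.

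The step I expect to be the main obstacle is nothing conceptual but the bookkeeping of the second paragraph: one must compute $u_t$, $u_x=v_t$ and $v_x$ for the self-similar field and verify that the three powers of $t$ genuinely cancel, which is exactly where a slip would be easiest. A subsidiary point, needed for the equivalence to be meaningful, is to note that $s\mapsto v=\Omega'(s)$ is a diffeomorphism of $(-\sqrt2,\sqrt2)$ onto $(-1,1)$ (because $\Omega''>0$ there), so that the reduced relation really is an ODE in the variable $v$ on which $A$ lives.
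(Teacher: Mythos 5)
Your proposal is correct and follows essentially the same route as the paper: substitute the self-similar VKLS profile into the Euler--Lagrange equation, reduce (via the substitution $\theta=\frac{\pi}{2}g'_x$) to a first-order linear ODE for $A'$, and use evenness of $A$ to kill the homogeneous term $C_1/\cos^2\theta$, leaving $A'(\xi)=-\frac{\pi}{2}\tan\frac{\pi\xi}{2}$ and hence $A(\xi)=\log\cos\frac{\pi\xi}{2}+C$. Your version is in fact slightly more careful than the paper's: you track the $\sqrt{2}$ normalization of $\Omega$ consistently, make explicit the integrating-factor computation, note that smoothness alone would not exclude the homogeneous solution, and record the diffeomorphism $s\mapsto\Omega'(s)$ and the reversibility needed for the ``if'' direction.
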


\begin{proof}

Let us first rewrite the Euler-Lagrange equation \eqref{EuLa} using the explicit form of the Lagrangian \eqref{lagrangian}:
\[
L'_{p_t} = -\log p_t - 1 + A(p_x),
\]
\[
L'_{p_x} = p_t \cdot A'(p_x),
\]
and thus \eqref{EuLa}  becomes 

\[
\dd{t} (A(g'_x) - \log g'_t - 1) + \dd{x}(g'_t A'(g'_x)) = 0,
\] 
and hence
\begin{equation}
A'' (g'_x) g''_{xx} (g'_t)^2 + 2 A'(g'_x) g''_{xt} g'_t - g''_{tt} = 0. \label{EuLa2}
\end{equation}

Now, for $g(t,x) = \Omega(t,x)$ we have 
$$
\Omega'_x(t,x) = \frac{2}{\pi} \arcsin\frac{x}{\sqrt t}, \quad \Omega'_t (t,x) = \frac{\sqrt{t - x^2}}{\pi t}.
$$ 
Thus $\frac{x}{\sqrt t} = \sin\frac{\pi \Omega_x}{2}$; substituting this into~\eqref{EuLa2}, we get
\[
4 A''(\Omega_x) (1 - \sin^2 \frac{\pi \Omega_x}{2}) - 4 \pi A'(\Omega_x) \sin \frac{\pi \Omega_x}{2} \cos \frac{\pi \Omega_x}{2} - \pi^2 (2 \sin^2 \frac{\pi \Omega_x}{2} - 1) = 0.
\]

Finally, making a change of variable $\xi = \Omega_x$, we get a linear inhomogeneous differential equation
\[
G'(\xi) \cdot 4 \cos^2 \frac{\pi \xi}{2} - 2\pi G{\xi} \cdot \sin(\pi \xi) + \pi^2 \cos{(\pi\xi)} = 0,
\]
for the derivative $G(\xi) = A'(\xi)$ (that should be odd as $A(\cdot)$ is even). A straightforward 
computation then shows that it admits a unique odd solution
\[
G'(\xi) = - \frac{\pi}{2} \tan \frac{\pi\xi}{2},
\]
and integrating it, we get the desired form for an even solution $A(\cdot)$:
\[
A(\xi) = \log \cos \frac{\pi\xi}{2} + C.
\]
We denote the ``constant-free'' part by $A_0(\xi) := \log \cos \frac{\pi\xi}{2}$.

\end{proof}

\subsection{Determining the constant}

Note that replacing $A_0$ by $A_0+C$ in \eqref{lagrangian} changes the total value of the functional $\mL$ by

\[
\int\limits_0^1 \int\limits_\bbR C \cdot g'_t \, dx \,dt = C \cdot \int\limits_\bbR (g(1,x) - g(0, x)) \, dx = C, 
\]
as we choose the normalisation of $g$ to give the figure of total area $1$. 
This explains why the constant $C$ is irrelevant to the problem of asymptotic shape: replacing $\mL$ by $\mL + C$ doesn't change its extremals. However, the value of $C$ is important for the ``total number of paths'' asymptotics of Conjecture \ref{Conj1}, and it can be found again with help of VKLS shape $\Omega(t,x)$. 

Namely, one has $\sum_{\lambda \in \bbY_n} \dim^2 \lambda = n!$. At the same time, the number of summands grows subexponentially, $|\bbY_n|\le \exp(c\cdot \sqrt n)$. Hence for most YD $\lambda$ in the sense of the Plancherel measure, $\dim \lambda$ is close to $\sqrt{n!}$ on the logarithmic scale:

$$
\forall r ~~~ \mu_n\left( \left\{  \lambda : \dim \lambda \le \sqrt\frac{n!}{r |\bbY_n|}\right\}\right) \le \frac{1}{r},
$$
hence for YDs with probability at least $1 - \frac{1}{r} $

$$
\sqrt{n!} \ge F^{\lambda/\o} = \dim \lambda \ge \frac{\sqrt{n!}}{\sqrt{n|\bbY_n|}}.
$$

The asymptotic shape of such diagrams is given by $\Omega(x)$, and of the corresponding YT by $\Omega(t,x)$. As 
\[
\log{n!} = \frac{1}{2} n \log n - \frac{1}{2}n + o(n)
\] 
and 
\[\log F^{\lambda/\lambda'} = \frac{1}{2}n \log n + n \cdot \mL[g] + o(n),
\] 
we have
$$
\mL [\Omega(t,x)] = - \frac{1}{2}.
$$

Calculating the corresponding double integral explicitly (we omit the straightforward calculations here), one finally gets the value
$$
C =  - \log\frac{\pi}{\sqrt 2}.
$$

\section{Modified TASEP and the discrete sine-process}\label{s:TASEP}

In this section we introduce the ``local'' maya model, briefly described in \S\ref{sec:MR}, and use it to re-obtain the functional of 
Conjecture~\ref{Conj1} from a different angle of approach.

\subsection{Markov chain and the discrete sine-process}

Namely, consider an analog of maya diagram on the circle instead of a real line, formed of some number $L$ of holes. The rule ``stone jumps to its right'' is then rewritten as ``stones jump in the positive direction''; see Fig.~\ref{f:mTASEP}. As the total number of stones is preserved by a jump, this total number (that we denote~$N$) is invariant under such a dynamics. Thus, for any $L$ and $N$ we get a topological Markov chain. 

It is quite similar to the TASEP ({\bf t}otally {\bf a}symmetric {\bf p}rocess), however, for the classical TASEP model all the stones that can jump do so equiprobably. We are concerned with the \emph{topological} entropy of this chain (as we are interested in counting all the possible trajectories for the YTs). Thus, we are interested in the \emph{measure of maximal entropy} for this chain (and the corresponding Markov shift as a dynamical system), thus modifying the jumping probabilities accordingly. An immediate observation is that the stones are more likely to jump if this jump does not reduce the number of degrees of freedom, creating a tightly packed group of stones, as this is likely to reduce the number of options on the next steps. In particular, the probabilities of such 
``crumpled'' states will be reduced (contrary to the classical TASEP, where all the possible states are equiprobable).

\begin{figure}\label{cut}
\begin{center}
\includegraphics[scale=1.5]{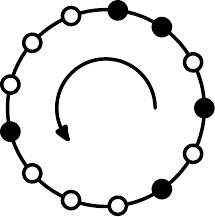}
\end{center}
\caption{TASEP: black stones are allowed to move only in the positive direction.}\label{f:mTASEP}
\end{figure}

Our main (formal) result, Theorem~\ref{th:TASEP}, describes the topological entropy and the maximal entropy measure for this topological Markov chain:
\begin{theorem*}
For any $L,N$, the entropy of the topological Markov chain defined above is equal to 
$$
h = \log \frac{\sin\frac{\pi N}{L}}{\sin\frac{\pi}{L}}.
$$
The corresponding measure of maximal entropy is a determinantal one; the correlation kernel, giving the distribution of possible states, 
is given by the projection on (any) $N$ consecutive Fourier harmonics on the length $L$ discrete circle.
\end{theorem*}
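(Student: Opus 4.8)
The plan is to read the process as a subshift of finite type and extract both quantities from the Perron--Frobenius data of its transition matrix $A$. Here $A$ is the $0$--$1$ matrix indexed by the $\binom{L}{N}$ admissible states (size-$N$ subsets of $\bbZ/L$), with $A_{S,S'}=1$ exactly when $S'$ arises from $S$ by moving one stone from a site $i$ into an empty site $i+1$. By the standard theory of SFTs the topological entropy is $\log r(A)$, where $r(A)$ is the spectral radius, and the measure of maximal entropy is the Markov (Parry) measure with stationary weights $\pi_S=u_Sv_S$ and transitions $p_{S\to S'}=A_{S,S'}v_{S'}/(r(A)v_S)$, where $u,v$ are the left/right Perron eigenvectors. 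So the statement reduces to computing $r(A)$ and identifying $\pi_S$.

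The central device is a free-fermion (Jordan--Wigner) description. Writing $A=\sum_{i\in\bbZ/L}a^\dagger_{i+1}a_i$ in second quantization builds in the exclusion rule automatically, so $A$ is a number-conserving quadratic operator whose single-particle matrix is the cyclic right shift on $\bbC^L$. The subtlety I must track is the Jordan--Wigner string wrapping around the circle: bulk nearest-neighbour hops carry trivial sign, but the wrap-around hop $L-1\to 0$ contributes a factor $(-1)^{N-1}$, so the single-particle operator is the shift twisted by this boundary sign. Its eigenvalues are therefore the $L$-th roots of $(-1)^{N-1}$: the honest roots of unity $e^{2\pi i k/L}$ for $N$ odd, and the shifted ones $e^{2\pi i(k+\frac12)/L}$ for $N$ even. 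This is precisely where the odd/even dichotomy of the later theorems originates, and it is the step I expect to demand the most care, both in pinning down the boundary sign and in checking that the diagonal $\pm1$ gauge relating $A$ to the literal fermionic operator is consistent.

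With the single-particle spectrum fixed, the free-fermion structure gives the $N$-particle eigenvalues of $A$ as subset sums $\sum_{m=1}^N\zeta_{k_m}$ over size-$N$ collections of distinct single-particle eigenvalues $\zeta_k$. Since the $\zeta_k$ are unit complex numbers, I would prove the elementary lemma that the modulus of such a subset sum is maximized by a block of $N$ consecutive eigenvalues; this is a clustering argument on the autocorrelation $\sum_d c_S(d)\cos(2\pi d/L)$, whose weights decrease away from $d=0$. A consecutive block sums, by a geometric series, to modulus $\sin(\pi N/L)/\sin(\pi/L)$, and a block can be centred so that the sum is real and positive. As $A$ is nonnegative and irreducible, Perron--Frobenius forces $r(A)$ to be a positive real eigenvalue equal to this maximal modulus, whence $h=\log r(A)=\log\big(\sin(\pi N/L)/\sin(\pi/L)\big)$. (Sanity checks: $N=1$ and $N=L-1$ give $h=0$, matching the deterministic dynamics.)

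Finally, for the measure I would pass back through the gauge. The left and right Perron eigenvectors of the fermionic operator are Slater determinants $v_S=\det\!\big(\zeta_{k_m}^{\,x_l}\big)$ for $S=\{x_1<\dots<x_N\}$ with $\{k_m\}$ the chosen block, and since the single-particle operator is unitary, hence normal, one has $u_S=\overline{v_S}$ up to normalization. The diagonal $\pm1$ gauge relating $A$ to the fermionic operator multiplies each eigenvector by a sign $\epsilon_S$, but these cancel in $\pi_S\propto u_Sv_S$, which therefore equals $|v_S|^2$. The Gram identity $|\det M|^2=\det(MM^\ast)$ then gives $\pi_S\propto\det\big(K(x_l,x_{l'})\big)$ with $K(x,y)=\frac1L\sum_m\zeta_{k_m}^{\,x-y}$, the orthogonal projection onto the $N$ consecutive Fourier harmonics; normalization is automatic because a rank-$N$ projection kernel yields a determinantal process with exactly $N$ points. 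Independence from the block choice (the word ``any'') follows since shifting $k_0$ multiplies $v_S$ by a global phase $\prod_l\zeta^{x_l}$ that cancels in $|v_S|^2$, or equivalently factors out of rows and columns of $K$ without altering its determinants. Combining the two parts yields the theorem.
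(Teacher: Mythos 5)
Your proposal is correct and takes essentially the same route as the paper: your second-quantized hopping operator $\sum_i a^\dagger_{i+1}a_i$ with the $(-1)^{N-1}$-twisted shift is exactly the paper's operator $C\wedge E\wedge\dots\wedge E$ acting on $\Lambda^N V$ (with the $\omega=e^{\pi i/L}$ gauge handling even $N$), and both arguments then read off the entropy as the maximal modulus of a subset sum of (shifted) roots of unity, attained on a consecutive block, and identify the Parry measure $u_S v_S=|v_S|^2$ with the principal minors of the rank-$N$ Fourier projection kernel. The only differences are cosmetic (Jordan--Wigner language versus wedge products), plus your sketched proofs of the consecutive-block maximality and of independence from the block choice, which the paper asserts without detail.
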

Postponing its proof till~\S\ref{ss:proof}, let us discuss the relation of this process to our main theme. Namely, we use it to describe a possible local evolution over a (large) part of it, that we consider to be winded to the circle, in the same way as parts of (hexagonal or square) lattices are winded to a torus (see, e.g.~\cite{Propp}). Thus, for a large YD and the corresponding maya diagram evolution, a local part of it can be modelled by taking a large circle and filling it with the same proportion of stones that are observed at this point of space and time. 

Now, the corresponding height function increases by~$1$ at the stone and decreases by~$1$ at each empty hole. Hence, while going around the circle it increases by $N-(L-N)=2N-L$ (so, formally speaking, this is a multi-valued function with a logarithmic monodromy). This corresponds to the slope of $\frac{2N-L}{L}$, that has a meaning of $g'_x$ (if this circle is but a small part of a large YT). Denoting $p:=\frac{N}{L}$ the density of the stones, we see that $p=\frac{g'_x+1}{2}$, thus $\frac{\pi N}{L} = \pi p = \frac{\pi}2 (1+g'_x)$ and hence that this (``local'') entropy can be rewritten as
$$
\log \frac{\sin\frac{\pi N}{L}}{\sin\frac{\pi}{L}} = \log \cos \frac{\pi g'_x}{2} - \log \sin \frac{\pi}{L}.
$$

On the other hand, for large $L$ we have $\sin \frac{\pi}{L} \approx \frac{\pi}{L}$, while $\frac{2}{L}$ is a speed at which the height function increases in average per one iteration of the process (a jump increases it in two cites, see Figure~\ref{YT}). Hence $\sin \frac{\pi}{L}\approx \frac{\pi g'_t}{2}$. 
Gluing independent local ``circled'' pieces together (in the same way as we did it in Section~\ref{sss:integral}), we see that the global number of [$g$-shaped] paths will be given by an integral of 
$$
\log \cos \frac{\pi g'_x}{2} - \log \frac{\pi g'_t}{2}.
$$
That is exactly what is suggested by Conjecture~\ref{Conj2} in the form of~\eqref{eq:h-G} in Remark~\ref{r:no-n-log}; the coefficient~$\frac{1}{2}$ comes from 
the fact that cells are of area~$2$, see Remark~\ref{area2}).

A concluding~--- and still informal~--- remark in this paragraph is that the consideration of this process leads to a handwaving explanation of the sine process appearing as the local shape of a (Plancherel)-random Young diagram (see~\cite[Theorem~3]{BOO}). Indeed, it is quite natural to expect that the local behaviour can be approximated by the corresponding maximal entropy measure. And there is the following 
\begin{rem}\label{r:sine-limit}
As we consider longer and longer circles, filled with a given limit density of stones $\frac{N_j}{L_j}\to a \,\in (0,1)$, the corresponding maximal entropy measures converge to the sine process. Indeed, their correlation kernels are projections on consecutive $N_j$ harmonics out of $L_j$, and this kernel converges to the kernel of projection of the Fourier transform to the arc that takes $a$-th part of the circle (Fourier-dual to~$\bbZ$). That kernel is exactly the one of the sine process, 
\[
K(k,l;a)=K(k-l,a)=
\begin{cases}
\frac{\sin \pi a (k-l)}{\pi(k-l)}, & k\neq l \\ 
a, & k=l.
\end{cases}
\]
\end{rem}

\subsection{Proof of Theorem \ref{th:TASEP}}\label{ss:proof}

Let $L,N$ be fixed, and consider the set of states of the topological Markov chain. Recall that the topological entropy is the logarithm 
of the spectral radius of the transition matrix~$T$, and the corresponding eigenvalue is real and positive. Moreover, if $v$ and $u$ are the corresponding non-negative left and right eigenvectors, the probabilities of states for a maximal entropy measure (``Parry measure'', see~\cite{Pollicott, Parry}) are given by the normalization of the vector with the coordinates~$u_s v_s$. 

Consider first the case of $N$ odd (this case is slightly simpler). The states of the Markov chain are enumerated by ${L \choose N}$ possible arrangements of the stones. Take a space $V=\bbR^L$; for any state of the chain, let $k_1<\dots<k_N$ be the numbers of stone-filled holes on the circle, and put in correspondence to it the element $v_{k_1,\dots, k_N}:=e_{k_1}\wedge \dots \wedge e_{k_N}\in\Lambda^N V$.

The transition matrix $T$ then acts on $\Lambda^N V$ in the following way. Let $C$ be operator that cyclically permutes the base of $V$, that is, $C(e_k):=e_{k+1 \textrm{ mod } L}$. Then 
\begin{equation}\label{eq:P-C}
T(e_{i_1}\wedge \dots \wedge e_{i_N}) = C(e_{i_1}) \wedge e_{i_2} \wedge \dots \wedge e_{i_N} + 
e_{i_1} \wedge C(e_{i_2}) \wedge \dots \wedge e_{i_N} + \dots + e_{i_1} \wedge \dots \wedge e_{i_{N-1}}  \wedge C(e_{i_N}). 
\end{equation}
Indeed, application of $C$ to $e_{i_k}$ corresponds to a possible jump of this stone; if the next hole, $i_k+1$-th, is filled, the jump is forbidden, that corresponds to the vanishing of the corresponding wedge product in the right hand side. Finally, as $N$ is odd, even if $i_N=L$ and thus the jump of this stone to the position~$1$ leads to the cyclic re-enumeration, this doesn't affect the final result, as 
$$
e_{i_1}\wedge e_{i_2}\wedge \dots\wedge e_{i_N}= e_{i_2}\wedge \dots\wedge e_{i_N}\wedge e_{i_1}.
$$

The right hand side of~\eqref{eq:P-C} is simply the operator $C\wedge E\wedge\dots\wedge E$, where $E$ is the identity operator on~$V$. Hence, its eigenvalues are sums of any $N$ different eigenvalues of~$C$, and the eigenvectors are the wedge products of the corresponding eigenvectors of~$C$. The eigenvalues of $C$ are $L$-th power roots of unity $\lambda_k=\exp(2 \pi i k / L)$, and the corresponding eigenvectors are discrete Fourier harmonics $v_k=\sum_j \exp(-2\pi i k j/L) e_j$.

Among the sums of $N=2m+1$ different $\lambda_k$'s, the maximal in absolute value are the ones corresponding to the consecutive (on the circle $\mod L$) eigenvalues; in particular, the positive and maximal one is 
$$
r=\lambda_{-m}+\dots+\lambda_m = \frac{e^{2\pi i \cdot(m+\frac{1}{2})/L} - e^{-2\pi i \cdot(m+\frac{1}{2})/L}}{e^{\pi i/L} - e^{-\pi i/L}}= \frac{\sin \frac{\pi N}{L}}{\sin \frac{\pi}{L}}.
$$
The topological entropy $h$ is equal to its logarithm, thus proving the entropy part of the theorem. 

Now, consider the corresponding eigenvector.
It is given by the product $v_{-m}\wedge\dots\wedge v_m\in\Lambda^N V$. Moreover, the right eigenvector $u$ has the same coordinates (replacing of $C$ by $C^*=C^{-1}$ leads to the same answer), though we prefer to conjugate its elements: 
$$
u=\overline{v_{-m}}\wedge\dots \overline{v_{m}}.
$$
Then, the probabilities of every state $k_1<\dots<k_N$ are proportional to
\begin{equation}\label{eq:K}
\det ((v_{i})_{k_j})_{i=-m,\dots,m \atop j=1,\dots,N} \cdot \det ((\overline{v_{i'}})_{k_j})_{i'=-m,\dots,m \atop j=1,\dots,N} = \det ((K)_{k_{j},k_{j'}})_{j,j'=1,\dots, N},
\end{equation}
where $K=\sum_{i=-m}^{m} v_i \cdot (v_i)^*$ is the projection operator on the subspace $\langle v_{-m},\dots,v_m\rangle\subset V$.
As $K$ is the rank $N$ orthogonal projector,~\eqref{eq:K} implies the desired description for the distribution of probabilities for the stationary measure. (In particular,~\eqref{eq:K} already describes a probability measure, with no need of normalization.)

Now, for the case of an even~$N$, the only part that changes is that the length $N$ cycle is now odd. To handle it, we take an $L$-th power root of minus unity, $\rt=\exp(\pi i /L)$, and instead of $C$ consider the operator $\rt C$, and instead of the base $e_k$ of $V$ we consider $\rt^k e_k$ and hence the base
$$
\tilde{v}_{k_1,\dots, k_N}:=\rt^{k_1+\dots+k_N} e_{k_1}\wedge \dots \wedge e_{k_N}. 
$$
Then again, the action of $\rt C\wedge E\wedge\dots \wedge E$ in this base becomes the action of the transition matrix~$T$; note that now for the jump from $k_N=L$ to~$1$ one gets two changes of sign, one from the length $N$ cycle, and another from $w^L=-1$:
$$
\rt^{k_1+\dots+k_{N-1}+L} e_{k_1}\wedge \dots \wedge (\rt C\cdot e_{L}) = \rt^{1+k_1+\dots+k_{N-1}} e_1\wedge e_{k_1}\wedge \dots \wedge e_{k_{N-1}}. 
$$

Now, the eigenvalues of $\rt C$ are $\rt \lambda_j$, thus the spectral radius (and the maximal real positive eigenvalue) of $T$ is equal to 
$$
r=\rt\lambda_{-m}+\dots + \rt\lambda_{m-1},
$$
where $N=2m$. Rewriting it as a sum of a geometric series with the denominator $e^{2\pi i/L} = \rt^2$, one gets the desired expression for the entropy
$$
e^h = r = \rt \frac{e^{2\pi i \cdot m/L} - e^{-2\pi i \cdot m /L}}{\rt^2 - 1}= \frac{\sin \frac{\pi N}{L}}{\sin \frac{\pi}{L}}.
$$
The same application of the formula for the Parry measure concludes the proof.

\subsection{The relation to the dimer and beads models}

Let us now approach the same question from a different angle, obtaining the relation to the dimer and beads models. 

\subsubsection{Freezing the jumps and the beads process}\label{ss:freeze}

Again, let $L,N$ be fixed. The correspondence that was described in Section~\ref{s:view} 
(see Figure~\ref{p:models}) allows to transform evolution of maya diagrams to the dimer 
covers of the corresponding hexagonal graph. This also applies to maya evolution on the 
circle, that is transformed to the dimer covers on the graph on the cylinder. However, this map is non-surjective: 
it becomes bijective if for the maya evolution we authorize (initially forbidden) absence of jumps and simultaneous jumps. 

The vertical extension method that we have used in the end of Section~\ref{s:view} 
to handle the simultaneous jumps would not work anymore in the circle case, 
as the total number of jumps in not anymore fixed. So instead we will use ``freezing''
techniques, imposing a ``tax'' on jumping. That is, we again consider a dimer 
configuration with a high number of levels (of some height~$M$), but this time, 
associate a (small) weight~$\eps$ to the ``jump'' edges, leaving all the others with the weight~$1$. 
Then, we take the weight of a dimer configuration to be the product of weights of dimers 
used, and choose a dimer cover with the probability proportional to its weight.

Consider first the limit where $M$ is chosen to grow as $M\sim \frac{\tau}{\eps}$, where $\tau$ is a constant. 
In this limit, we have the following
\begin{Lem}\label{l:no-simultaneous}
Whichever are the boundary (initial and final) conditions, the probability (that is, the proportion of total 
weight of configurations) of two jumps on the same level converges to zero as $\eps\to 0$. 
\end{Lem}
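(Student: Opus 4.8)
The plan is to lift the evolution to a \emph{transfer matrix} acting on the finite-dimensional space spanned by the maya states, and to read off the lemma from a standard ``product of near-identity matrices'' limit. Let $W=\bbR^{\binom{L}{N}}$ have a basis indexed by the maya states (arrangements of $N$ stones among the $L$ holes), and let $\mathcal{T}_\eps$ be the one-step transfer matrix whose $(s,s')$ entry is the weight $\eps^{\#\text{jumps}}$ of the (unique, if valid) elementary transition $s'\to s$ on one level, and $0$ if no valid transition exists. Grouping by the number of simultaneous jumps, we may write $\mathcal{T}_\eps = I + \eps\,T_1 + \eps^2 T_2 + \dots$, a finite sum, where $T_1$ is exactly the transition matrix $T$ of Theorem~\ref{th:TASEP} (one stone jumps) and $T_k$ records the $k$-fold simultaneous jumps. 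If $i$ and $f$ denote the (vectors of the) fixed initial and final maya diagrams, the total weight is $\langle f,\ \mathcal{T}_\eps^{M} i\rangle$, while the total weight of the configurations having \emph{at most} one jump per level is $\langle f,\ \mathcal{A}_\eps^{M} i\rangle$ with $\mathcal{A}_\eps := I + \eps\,T_1$. Consequently
\[
\Prob(\text{some level carries} \ge 2 \text{ jumps}) = \frac{\langle f,\ (\mathcal{T}_\eps^{M}-\mathcal{A}_\eps^{M})\, i\rangle}{\langle f,\ \mathcal{T}_\eps^{M}\, i\rangle}.
\]

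The heart of the argument is then the scaling limit $M=M(\eps)\sim \tau/\eps$. First, $\mathcal{A}_\eps^{M}=(I+\eps T_1)^{M}\to e^{\tau T_1}$ by the usual exponential limit. Second, writing $\mathcal{B}_\eps := \mathcal{T}_\eps-\mathcal{A}_\eps=\eps^2 T_2+\dots$, one has $\|\mathcal{B}_\eps\|=O(\eps^2)$, while $\|\mathcal{T}_\eps\|,\|\mathcal{A}_\eps\|\le 1+\const\cdot\eps$, so the telescoping bound
\[
\|\mathcal{T}_\eps^{M}-\mathcal{A}_\eps^{M}\| \le M\,(1+\const\cdot\eps)^{M-1}\,\|\mathcal{B}_\eps\| = O\!\left(\tfrac{\tau}{\eps}\right)\cdot O(1)\cdot O(\eps^2)=O(\eps)
\]
shows $\mathcal{T}_\eps^{M}\to e^{\tau T_1}$ as well, and in particular that the numerator above tends to $0$ (indeed, at rate $O(\eps)$). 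It remains to see that the denominator stays bounded away from $0$: it converges to $\langle f,\ e^{\tau T_1} i\rangle=\sum_{J\ge 0}\frac{\tau^J}{J!}\langle f,\ T_1^{J} i\rangle$, and this is strictly positive because the single-jump dynamics connects any two maya states on the circle (the chain is irreducible, as $N<L$ always leaves a movable stone), so some $\langle f,T_1^{J} i\rangle>0$. Dividing, the probability tends to $0$, uniformly in the boundary data $i,f$ since there are only finitely many pairs.

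I expect the only genuine obstacle to be the second convergence, i.e.\ controlling the product of the $M\sim\tau/\eps$ near-identity matrices $\mathcal{T}_\eps$: one must check that the simultaneous-jump part $\mathcal{B}_\eps$ is genuinely of order $\eps^2$ (each extra jump on a level costs an extra factor $\eps$) so that its accumulated effect over $M$ levels is only $O(\eps)$. This is precisely the transfer-matrix incarnation of the elementary count used in Section~\ref{s:view} (configurations with an extra jump on a level are suppressed by a factor $\binom{M}{J-1}/\binom{M}{J}\sim J/M\sim J\eps/\tau$); the advantage of the operator formulation is that it sums automatically over the \emph{unbounded} number of jumps $J$ occurring on the circle, which is the one new feature absent from the fixed-$n$ argument of Section~\ref{s:view}. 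Finally, note that the limit $\langle f,e^{\tau T_1} i\rangle$ already exhibits the Poissonian structure ($\tau^J/J!$) underlying the limit process of Theorem~\ref{t:cylinder}.
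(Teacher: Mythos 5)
Your proposal is correct, but it takes a genuinely different route from the paper's. The paper argues at the level of generating series: it writes $Z_0=\sum_n W_n\,\eps^n\binom{M}{n}$ for the weight of the configurations with no simultaneous jumps, bounds the full partition function by the stars-and-bars count of non-decreasing jump-time sequences, $Z\le \sum_n W_n\,\eps^n\binom{M+n}{n}$, and shows that both series converge to $\sum_n W_n\frac{\tau^n}{n!}$ --- the termwise limits being upgraded to convergence of the sums by exhibiting a dominating convergent series independent of $\eps$ (via $W_n<R^n$ and a case split $n\le M$ versus $n>M$). Your transfer-matrix argument replaces that bookkeeping by a single operator-norm estimate: the telescoping bound $\|\mathcal{T}_\eps^M-\mathcal{A}_\eps^M\|\le M(1+\const\cdot\eps)^{M-1}\|\mathcal{B}_\eps\|=O(\eps)$ together with $(I+\eps T_1)^M\to e^{\tau T_1}$, which also yields an explicit $O(\eps)$ rate that the paper does not state. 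The two proofs identify the same limit, since $\langle f, e^{\tau T_1}i\rangle=\sum_n\frac{\tau^n}{n!}\langle f,T_1^n i\rangle$ and $\langle f,T_1^n i\rangle=W_n$; the paper's series form is what gets quoted later in the proof of Theorem~\ref{t:cylinder} (the number of jumps has probability proportional to $W_n\frac{\tau^n}{n!}$), and your formulation delivers exactly the same information in the form $e^{\tau T_1}$, so nothing downstream is lost. One small point to tighten: your parenthetical justification of positivity of the denominator ("$N<L$ always leaves a movable stone") only shows the chain has no dead ends, whereas what is needed is transitivity of the single-jump dynamics on the ring, i.e.\ that \emph{some} $\langle f,T_1^J i\rangle$ is strictly positive for the given boundary pair; this is true (single jumps transport gap units around the circle, so any configuration is reachable from any other) and is asserted by the paper in the proof of Lemma~\ref{l:proba}, but it deserves a line of proof rather than the no-dead-end remark.
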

Before proving it formally, note that for any $n$ all the configurations with $n$ jumps, all at different levels, 
have the same probability (as they have the same weight~$\eps^n$). In particular, conditioning to a given $n$ 
gives the choice of moments of jumps that are uniformly chosen among ${M \choose n}$. In particular, 
rescaling the time $\eps$ times by denoting $t:=\eps k \in [0,\eps M]$ (where $k$ is the vertical coordinate), 
we see that this conditioning leads in the limit $\eps\to 0$ to the uniform choice of $n$ points on $[0,\tau]$.

We can then consider the bulk limit: make $\tau$ go to infinity and shift the origin to $\frac{\tau}{2}$ in the rescaled coordinates.
The jump places and (renormalized) moments then provide a cylinder analogue of the bead process, a random subset of $\bbZ_L \times \bbR$.
\begin{theorem}\label{t:cylinder}
This limit process is given by coupling a maximal entropy measure for the two-sided topological Markov chain and of a Poisson process on $\bbR$ of constant intensity, providing the jump moments. The intensity of the Poisson process is equal to $e^h$, where $h$ is the entropy of the Markov chain (given by Theorem~\ref{th:TASEP}).
\end{theorem}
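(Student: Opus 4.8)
The plan is to reduce the whole statement to the spectral analysis of the transition matrix $T$ already carried out in the proof of Theorem~\ref{th:TASEP}, performing the two limits ($\eps\to 0$ at $M=\tau/\eps$, then $\tau\to\infty$) one after the other. First I would record the ``grand–canonical'' structure of the frozen model. By Lemma~\ref{l:no-simultaneous} we may discard configurations with two jumps on one level, so a configuration over $M$ levels is recorded by a trajectory of the Markov chain — a sequence of states $s_0\to s_1\to\cdots$ whose consecutive distinct members differ by one admissible jump — together with a choice of which levels carry the jumps. A trajectory with $n$ jumps has weight $\eps^n$ and admits $\binom{M}{n}$ level–placements, so, fixing boundary states $s_0=a$ and $s_M=b$, the partition function is
\[
Z_{a,b}=\sum_n \binom{M}{n}\,(\eps T)^n_{a,b}=\bigl((I+\eps T)^M\bigr)_{a,b}.
\]
(For even $N$ one runs this with $\rt C$ in place of $C$, exactly as in \S\ref{ss:proof}.) The essential point is that the weight of a trajectory depends on it only through its number of jumps.

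Next, setting $M=\tau/\eps$ and letting $\eps\to 0$ gives $(I+\eps T)^{\tau/\eps}\to e^{\tau T}$, hence $Z_{a,b}\to (e^{\tau T})_{a,b}$. After rescaling $t=\eps k$, the $\binom{M}{n}$ level choices converge to the volume $\tau^n/n!$ of the simplex $0<t_1<\dots<t_n<\tau$, consistently with $\eps^n\binom{M}{n}\to\tau^n/n!$. Thus the $\eps\to 0$ limit is the continuous–time process on $[0,\tau]$ in which an admissible jump sequence carries weight $\prod_i T_{s_{i-1},s_i}$ and the jump times are spread with unit (Lebesgue) density on each simplex — that is, the unnormalised continuous–time chain with rate matrix $T$ and propagator $e^{\tau T}$, conditioned on the boundary states $a,b$.

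Then I would take the bulk limit using the strict spectral gap of $T$ (the dominant eigenvalue $r=e^h$ being simple and strictly larger in modulus than the rest, as found in \S\ref{ss:proof}). Writing $v,u$ for the right and left Perron eigenvectors, normalised by $\sum_s u_sv_s=1$, one gets $e^{\tau T}\approx e^{\tau r}\,v\,u^{\top}$, so that for a time $\sigma$ far from both ends
\[
\Prob(s_\sigma=c)=\frac{(e^{\sigma T})_{a,c}(e^{(\tau-\sigma)T})_{c,b}}{(e^{\tau T})_{a,b}}\longrightarrow u_c v_c,
\]
the Parry (maximal–entropy) measure, independently of the boundary data. To read off the joint law I would apply the ground–state transform $\hat G=\diag(v)^{-1}(T-rI)\diag(v)$, a genuine Markov generator since $\hat G\mathbf{1}=0$. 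The decisive computation is that each diagonal entry equals $-r$, so the total exit rate out of every state is $\sum_{c'\ne c}\hat G_{c,c'}=v_c^{-1}(Tv)_c=r=e^h$, the same for all states. Hence the jump times form a Poisson process of intensity $e^h$ decoupled from the state sequence, while the embedded jump chain $\hat T_{c,c'}=v_{c'}(r v_c)^{-1}T_{c,c'}$ is exactly the maximal–entropy transition chain with stationary law $u_cv_c$; its two–sided stationary version is the two–sided topological Markov chain at maximal entropy, which is precisely the coupling asserted in the theorem.

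The main obstacle is the rigorous justification of this bulk limit: one must establish convergence of the finite–dimensional distributions (of both states and jump times in a fixed window) uniformly over the boundary conditions $a,b$, which is where the strict spectral gap of $T$ — including its $\rt$–twisted version for even $N$ — enters quantitatively, together with the interchange of the $\eps\to0$ and $\tau\to\infty$ limits and the control of the error term in Lemma~\ref{l:no-simultaneous}. Once the continuous–time propagator $e^{\tau T}$ and its ground–state transform are in place, the Poisson/Parry decoupling itself is just the short constant–exit–rate computation above.
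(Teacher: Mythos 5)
Your proposal reaches the right conclusion by a genuinely different route from the paper, but it contains one spectral misstatement that has to be repaired. You assert that the dominant eigenvalue $r=e^h$ of $T$ is ``simple and strictly larger in modulus than the rest, as found in \S\ref{ss:proof}''. That is false, and the paper states the opposite: the chain is transitive but \emph{periodic}, with minimal period $L$, so the spectrum of $T$ is invariant under multiplication by $e^{2\pi i/L}$ and there are exactly $L$ eigenvalues of maximal modulus, namely $r\,e^{2\pi i k/L}$, $k=0,\dots,L-1$ (the sums over the $L$ rotated windows of $N$ consecutive roots of unity). What saves your argument is precisely that you pass to the continuous-time propagator $e^{\tau T}$: there the relevant comparison is of \emph{real parts}, not moduli, and $r$ is the unique eigenvalue of $T$ with maximal real part (the window of $N$ consecutive roots of unity centered at $1$ is the unique maximizer of $\sum_i \cos(2\pi k_i/L)$, since cosine strictly decreases in $|k|$); equivalently, $e^{\tau T}$ has all entries strictly positive for every $\tau>0$ because $T$ is irreducible, so Perron--Frobenius applies to the semigroup directly even though $T$ itself is not primitive. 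With that correction, $e^{\tau T}\approx e^{\tau r}\,v\,u^{\top}$ holds and the rest of your argument goes through.

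Granting this fix, the comparison with the paper is instructive. The paper stays in discrete time and must confront the periodicity head-on: its Lemma~\ref{l:proba} proves the law of large numbers $\xi/\tau\to e^h$ by realizing the number of jumps as a Poisson variable conditioned to a residue class mod $L$ and then reweighted by a bounded factor, and the identification of the bulk state law as the maximal entropy measure requires averaging the $L$ cyclically permuted components of that measure, using that the random offset $\xi_1 \bmod L$ equidistributes as $\tau\to\infty$. Your route --- the partition function $\bigl((I+\eps T)^M\bigr)_{a,b}\to\bigl(e^{\tau T}\bigr)_{a,b}$, Perron asymptotics of the semigroup, and the ground-state transform $\hat G=\diag(v)^{-1}(T-rI)\diag(v)$ whose diagonal is constantly $-r$ (valid since $T_{c,c}=0$) --- makes the periodicity invisible and delivers the Poisson/Parry decoupling in a single computation: constant exit rate implies the jump times form a Poisson process of intensity $e^h$ independent of the embedded chain, and the embedded chain is exactly the Parry chain with stationary law $u_c v_c$. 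This is cleaner and more quantitative than the paper's argument; the price is the technical work you yourself flag (uniformity of the window limits over boundary conditions and the interchange of $\eps\to 0$ with $\tau\to\infty$), which is comparable to what the paper leaves implicit.
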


\begin{proof}[Proof of Lemma~\ref{l:no-simultaneous}]
Consider the corresponding partition function $Z$, that is the sum of weights of all the configurations, and its part $Z_0$ that is given by the sum of weights of configurations with no simultaneous jumps. Let $W_n$ be the number of paths from the initial to the final configuration, consisting of~$n$ jumps. 
It suffices to show that as $\eps\to 0$, both $Z$ and $Z_0$ converge to the same (positive and finite) limit.

On one hand, we have
\begin{equation}\label{eq:eps-lower}
Z_0=\sum_n W_n \eps^n {M\choose n}.
\end{equation}
On the other hand, when we authorize configurations with simultaneous jumps, we can still enumerate them by a non-decreasing sequence of moments $1\le k_1\le\dots\le k_n\le M$, and the number of such sequences equals ${M+n -1 \choose n}$. Thus
\begin{equation}\label{eq:eps-upper}
Z\le \sum_n W_n \eps^n {M+n \choose n}.
\end{equation}
Note that for any fixed $n$
\[
W_n \eps^n {M \choose n} \sim W_n \eps^n \frac{M^n}{n!} = W_n \frac{(\eps M)^n}{n!}  \xrightarrow[\eps\to 0]{} W_n \frac{\tau^n}{n!},
\]
and the same applies for the terms of the second series. Hence, both series coverge termwise as $\eps\to 0$ to the series 
\[
\sum_n W_n \frac{\tau^n}{n!},
\]
that is convergent (and whose sum is strictly positive). To conclude the proof, it suffices thus to check that their convergence is uniform in $\eps$ in some neighbourhood of zero, $(0,\eps_0)$. To do so, we will provide an upper estimate for the terms of these series by a convergent series that does not depend on~$\eps$. 

Indeed, fix $R$ that is larger than the norm of the transition matrix of our Markov chain, then $W_n<R^n$ for all $n$. Now, for any $\eps>0$ if $n\le M$, we have 
$$
{M+n \choose n} \le {2M \choose n}  < \frac{(2M)^n}{n!},
$$
and the corresponding term does not exceed (once $M<\frac{2\tau}{\eps}$)
\[
W_n \eps^n {M+n \choose n} \le R^n \eps^n \frac{(2M)^n}{n!} < \frac{(4\tau R)^n}{n!};
\]
the term in the right hand side provides a convergent series that does not depend on $\eps$. On the other hand, if $M<n$, we have ${M+n \choose n}< {2n \choose n} < 2^{2n}$, and thus 
\[
W_n \eps^n {M+n \choose n} \le R^n \eps^n 2^{2n} = (4R\eps)^n < \frac{1}{2^n}
\]
once $\eps<\frac{1}{8R}$. Hence, both series $Z$ and $Z_0$ for all sufficiently small $\eps>0$ are bounded termwise by the series
\[
\sum_n \max(\frac{(4\tau R)^n}{n!}, \frac{1}{2^n}),
\]
that is convergent and does not depend on~$\eps$. Hence, their convergence is uniform as $\eps\to 0$, and this concludes the proof of the lemma.
\end{proof}

\begin{proof}[Proof of Theorem~\ref{t:cylinder}]
Note first that due to Lemma~\ref{l:no-simultaneous} the process that we obtain on $[-\frac{\tau}{2},\frac{\tau}{2}]\times \bbZ_L$ can be equivalently obtained by passing to the limit only from the configurations with no simultaneous jumps. 

Also from Lemma~\ref{l:no-simultaneous} and from its proof, for any given $\tau>0$ this limiting process can be described in the following way. 
First, one randomly chooses a number $\xi$ of jumps, in such a way that the probability of $\xi=n$ is proportional to $W_n \frac{\tau^n}{n!}$. Then, one of $W_\xi$ length $\xi$ paths satisfying the boundary conditions is chosen equiprobably, as well as a set of $\xi$ independently chosen points on $[-\frac{\tau}{2},\frac{\tau}{2}]$, giving the moments, at which (after putting them in the increasing order) the jumps following the chosen path will occur. 

Next, let us describe the ``average density'' of the jumps: we have the following lemma.
\begin{Lem}\label{l:proba}
As $\tau\to\infty$, the fraction $\frac{\xi}{\tau}$ between the (random) number of jumps $\xi$ and the total time $\tau$ converges in probability to the constant value~$e^h$.
\end{Lem}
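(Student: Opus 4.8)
The plan is to recognize $\xi$ as a random variable whose distribution is the $W_n\tau^n/n!$--weighted law described just above in the proof of Theorem~\ref{t:cylinder}, and to identify its typical value via a Laplace/saddle-point analysis of the partition function $\Phi(\tau):=\sum_n W_n \frac{\tau^n}{n!}$. The key quantitative input is that, by the definition of topological entropy as the logarithm of the spectral radius $r=e^h$ of the transition matrix~$T$ (Theorem~\ref{th:TASEP}), the number of admissible length-$n$ paths grows like $W_n = r^n(1+o(1))\cdot \psi_n$, where the subexponential factors $\psi_n$ come from the (fixed, boundary-dependent) overlaps of the length-$n$ word with the positive left/right Perron eigenvectors $v,u$ of~$T$. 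More precisely $W_n = \langle u, T^n v_{\mathrm{bd}}\rangle$--type quantities and $W_n/r^n$ converges to a positive constant as $n\to\infty$.

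First I would write the probability generating structure explicitly: $\Prob(\xi=n)=\frac{W_n \tau^n/n!}{\Phi(\tau)}$, so $\xi$ is essentially a Poisson-type random variable whose intensity is governed by the exponential growth rate of $W_n\tau^n/n!$. Substituting $W_n\approx c\,r^n$ gives $\Phi(\tau)\approx c\,e^{r\tau}=c\,e^{e^h\tau}$, i.e.\ $\xi$ behaves like a Poisson random variable of mean $r\tau=e^h\tau$. Then $\bbE[\xi]=e^h\tau(1+o(1))$ and $\mathrm{Var}(\xi)=e^h\tau(1+o(1))$, so by Chebyshev's inequality
\[
\Prob\!\left(\left|\frac{\xi}{\tau}-e^h\right|>\delta\right)\le \frac{\mathrm{Var}(\xi)}{\delta^2\tau^2}=\frac{e^h}{\delta^2\tau}(1+o(1))\xrightarrow[\tau\to\infty]{}0,
\]
which is exactly the claimed convergence in probability of $\frac{\xi}{\tau}$ to $e^h$. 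The cleanest route is therefore to compute the first two factorial moments of $\xi$ directly: $\bbE[\xi]=\tau\,\Phi'(\tau)/\Phi(\tau)$ and $\bbE[\xi(\xi-1)]=\tau^2\,\Phi''(\tau)/\Phi(\tau)$, and to show both ratios $\Phi'/\Phi$ and $\Phi''/\Phi'$ tend to $r=e^h$ as $\tau\to\infty$.

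The main obstacle is the rigorous control of the subexponential prefactors $W_n/r^n$, i.e.\ verifying that $\Phi'(\tau)/\Phi(\tau)\to r$ despite these corrections. This is a Tauberian/Abelian statement: knowing $W_n\sim c\,r^n$ (equivalently that the generating function $\sum W_n z^n$ has a simple dominant singularity at $z=1/r$), one concludes $\Phi(\tau)=\sum W_n\tau^n/n!\sim c\,e^{r\tau}$ and, crucially, that differentiating preserves the same leading asymptotics. I would make this precise by splitting $\Phi(\tau)=\sum_n W_n\tau^n/n!$ into the bulk $|n-r\tau|\le \tau^{2/3}$ and the tails, using the uniform exponential bound $W_n\le R^n$ (already available from the proof of Lemma~\ref{l:no-simultaneous}, where $R$ exceeds the operator norm of~$T$) to discard the tails, and using $W_n=r^n(c+o(1))$ in the bulk to compare $\Phi$, $\Phi'$, $\Phi''$ termwise against the corresponding sums for $c\,e^{r\tau}$. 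The bulk contribution to $\tau\Phi'/\Phi$ then concentrates at $n\approx r\tau$, giving the ratio $r$; the tail estimates guarantee the error is $o(1)$. Once the two moment ratios are shown to converge to $r=e^h$, the Chebyshev bound above finishes the proof, and the positivity and finiteness of $\Phi$ (hence the well-definedness of all these ratios) was already established in the proof of Lemma~\ref{l:no-simultaneous}.
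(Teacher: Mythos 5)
There is a genuine gap, and it is exactly the point the paper's own proof is careful about: your central quantitative input, that $W_n/r^n$ converges to a positive constant as $n\to\infty$, is false for this Markov chain. The chain is irreducible but \emph{not} aperiodic: each jump increases the sum of the stones' positions by $1$ modulo $L$, so a path between two fixed configurations can exist only when its length $n$ lies in a single residue class $n_0 \bmod L$; the minimal period is exactly $L$. Hence $W_n=0$ for all $n\not\equiv n_0 \pmod L$, the Perron--Frobenius asymptotics $W_n=\langle u,T^n v_{\mathrm{bd}}\rangle\sim c\,r^n$ holds only \emph{along} the residue class, and $W_n/r^n$ oscillates (mostly vanishing) rather than converging. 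Consequently your bulk step fails as written: in the window $|n-r\tau|\le\tau^{2/3}$ most terms of $\Phi(\tau)=\sum_n W_n\tau^n/n!$ are zero, the termwise comparison with $c\,e^{r\tau}$ breaks down, and in fact $\Phi(\tau)\sim \frac{c}{L}e^{r\tau}$ (by a roots-of-unity filter on the Poisson series), not $c\,e^{r\tau}$.

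The good news is that your moment-plus-Chebyshev strategy is repairable, and it is a legitimately different route from the paper's. Restrict all sums to $n\equiv n_0\pmod L$, use
\[
\sum_{n\equiv n_0 \; (\mathrm{mod}\, L)}\frac{(r\tau)^n}{n!}=\frac{1}{L}\,e^{r\tau}\bigl(1+O(e^{-\delta\tau})\bigr),
\]
and the same computation gives $\Phi'/\Phi\to r$ and $\Phi''/\Phi\to r^2$; note that the ratio convergence alone justifies only $\mathrm{Var}(\xi)=o(\tau^2)$, not your stated $\mathrm{Var}(\xi)=e^h\tau(1+o(1))$, but $o(\tau^2)$ is all Chebyshev needs for the weak law. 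By contrast, the paper avoids moment estimates entirely: it observes that the law of $\xi$ is obtained from a Poisson$(\rho\tau)$ variable by (i) conditioning on the event $\{\xi\equiv n_0 \bmod L\}$, which has probability bounded below (asymptotically $1/L$), and (ii) reweighting by factors bounded above and below (coming from $W_n/(c\rho^n)\to 1$ along the class), and that neither operation can destroy a convergence-in-probability statement. Both arguments hinge on the same fact you omitted~--- identifying the period $L$ structure of $W_n$~--- so as submitted your proof is incomplete precisely at its load-bearing step.
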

\begin{proof}
Let $\rho$ stay for the spectral radius of the transition matrix of our topological Markov chain; then, $\rho=e^h$. If we had $W_n=\rho^n$, then the distribution of $\xi$ would follow exactly the Poisson law with the parameter~$\rho \tau$, and the statement of the lemma would be a mere Law of Large Numbers.

Now, our Markov chain is transitive. If it was also aperiodic, we would have $W_n\sim c \rho^n$ for some constant~$c$. However, it is not; it is easy to check that its minimal period is equal to $L$, the length of the circle. Thus, for any chosen boundary conditions there exists a residue $n_0$ such that the number $W_n$ of Markov chain paths of length $n$ behaves as 
\[
W_n \sim c \rho^n \quad \text{if } \, n \equiv n_0  \mod L,
\]
where $c$ is a constant (depending on the particular choice of the boundary conditions) and $W_n=0$ otherwise. The conclusion of the lemma then can be deduced from the ``pure exponent'' case. Indeed, the distribution of $\xi$ for a given $\tau$ can be obtained by a series of two operations. First, a Poisson random variable $\pi(\rho \tau)$ is conditioned to be congruent to $n_0 \mod L$. Then, for the obtained probability distribution the probability of each $n$ is multiplied by a bounded factor (corresponding to passing from $\rho^n$ to $W_n$). 

And both these operations do not affect the Law of Large Numbers conclusion. Indeed, the first one selects a part of lower-bounded probability (asymptotically $1/L$-th one, as $\tau\to\infty$), while the second one can change the quotient of probabilities of the events only by a bounded factor (and hence  also cannot break the ``with probability convergent to~1'' statement). Thus, we have the desired Law of Large Numbers: the quotient $\frac{\xi}{\tau}$ converges to $\rho$ in probability as $\tau\to\infty$.
\end{proof}

Now, selecting $n\sim \rho \tau$ uniformly distributed independent points on the interval $[-\frac{\tau}{2},\frac{\tau}{2}]$ converges as $\tau\to\infty$ to the Poisson process on the real line with the intensity~$\rho$. Thus the same holds if we average on a set of values of $n$ that $\xi$ takes with the probability convergent to $1$, on which $\frac{\xi}{\tau}\to\rho$.

Now, for any fixed interval $[a,b]$ on the real line consider the number $\xi_1$ of the jumps on $[-\frac{\tau}{2},a]$. Note that in probability $\xi_1$ tends to infinity, while its residue modulo $L$ is asymptotically uniformly distributed. 

For an aperiodic transitive topological Markov chain, the uniform distribution on paths with given boundary conditions in the bulk converges to the maximal entropy measure. Meanwhile, for a period~$L$ transitive Markov chain the accumulation points of such uniform distributions are the $L$ components of the maximal entropy measure that are permuted by the dynamics. However, as we take here the ``observation window'' $[a,b]$ that is separated from the fixed boundary $-\frac{\tau}{2}$ by the random number of steps $\xi_1$ that has all the residues $\mod L$ asymptotically equiprobable as $\tau\to\infty$, these permuted components are being averaged and one gets exactly the maximal entropy measure.
\end{proof}

\subsubsection{Bead process' kernel}\label{ss:beads}

We would not go into this alternate approach if it wouldn't lead to some interesting connections. Namely, let us study the random dimer covers that have already appeared in Sec.~\ref{ss:freeze} via the standard methods, that is, via the Kasteleyn theorem. 

Again, let $\eps,M$ be fixed, and we consider a chosen dimer partition of the corresponding graph of height $M$ with the weights $\eps$ on the ``jump'' edges that is chosen randomly in such a way that its probability is proportional to the weight of the configuration (in other words, with respect to the corresponding Gibbs measure).

Let us recall the statement of the Kasteleyn Theorem~\cite{Kast1, Kast2}. Let a planar bipartite graph with a weighted adjacency matrix $W_0=(\wt_{bw})$ be given. Fix additional factors $(\alpha_{bw})$, such that for any face of the graph, formed by vertices $b_1,w_1,\dots,b_k,w_k$, one has 
\begin{equation}\label{eq:frac}
\frac{\alpha_{b_1 w_1}\alpha_{b_2 w_2}\dots \alpha_{b_k w_k}}{\alpha_{b_1 w_2}\alpha_{b_2 w_3}\dots \alpha_{b_k w_1}}=(-1)^{k-1};
\end{equation}
at least one such choice always exists (it follows from the planarity of the graph). Then for all possible dimer covers $(j,{\sigma(j)})$ of the graph the products 
\[
\sign(\sigma) \cdot \prod_j \alpha_{j {\sigma(j)}}
\]
take the same value $\mathbf{a}$. This implies that the determinant of the matrix $W=(w_{bw}\alpha_{bw})$ equals to the product $\mathbf{a}\cdot Z$, where $Z$ is the corresponding statistical sum, as all the dimer covers contribute to the determinant with their weights times $\mathbf{a}$ (and the signs cancel out). Hence the probability of dimers $(b_{1},w_{1}),\dots, (b_{k},w_{k})$ being chosen for a Gibbs-random configuration is equal to
\begin{equation}\label{kasteleyn1}
P((b_i,w_i)_{i=1,\dots,k} \text{ chosen}) = \prod_{i=1}^k (\alpha_{b_i w_i}\wt_{b_i w_i}) \cdot \det (K_{w_jb_i})_{i,j=1,\dots,k},
\end{equation}
where $K=W^{-1}$ is the inverse matrix.

We are going to apply this theorem to our graph, that is bipartite and planar. Indeed, it naturally embeds into a cylinder, which can be sent to the plane using the polar coordinates. Under this embedding, \emph{almost} all the faces of the graph become hexagons. However, there are two exceptions: the inner and the outer faces, that have $2L+2(L-N)=4L-2N$ sides each. The choice of the factors $\alpha_{bw}$ will thus depend on the parity of $N$.

Namely, for odd $N$ we can take all the $\alpha$'s to be equal to~$1$: all the faces have number of faces of the form $2(2k+1)$. However, it turns out that the following choice will simplify the later computations: we take
\begin{equation}
\alpha_{bw}=\begin{cases}
1, & \text{if } bw \text{ is a jump edge}\\
1, & \text{if it is a ``stone stays'' edge}\\
-1, & \text{if it is a ``no stone'' edge}.
\end{cases}
\end{equation}
It is easy to check that this choice satisfies the condition~\eqref{eq:frac}: the fractions in its left hand side have the same number of $(-1)$'s in the numerator and denominator. In the same way, for even $N$ we handle the inner and outer faces in the most ``rotationally symmetric'' way, taking
\begin{equation}
\alpha_{bw}=\begin{cases}
\rt=\exp(\pi i /L), & \text{if } bw \text{ is a jump edge}\\
1, & \text{if it is a ``stone stays'' edge}\\
-1, & \text{if it is a ``no stone'' edge}.
\end{cases}
\end{equation}
Indeed, for such a choice one gets in the right hand side of~\eqref{eq:frac} the fraction $\frac{-\rt}{-\rt}=1$ for any hexagonal face, and $\rt^L=-1$ for the inner and outer ones, thus satisfying the assumptions of the Kasteleyn theorem.

Now, in our weighted adjacency matrix $W_0$ there are edges of two different weights: $1$ and~$\eps$. This (after the application of the Kasteleyn theorem) leads us to the consideration of two \emph{different} possible determinantal-type processes. Namely, we can consider:
\begin{itemize}
\item \textbf{The presence of stones at given times and positions}; in the limit $\eps\to 0$, their presence is given by the corresponding ``stone stays in the place'' edges (the probability of a jump at any particular time tends to zero). The product of weights of these edges is equal to~$1$, and so the corresponding probability tends to the corresponding minor of the limit of the matrix $K=W^{-1}$.
\item \textbf{The positions and times of the jumps}, in other words, the corresponding bead process. As the jump edges have weight $\eps$, for a $k$-edges configuration its probability is given by a product 
\[
\eps^k \cdot \det (K_{w_j b_i})_{i,j=1,\dots,k}
\]
for odd $N$ (and with an additional $\rt^k$ in front for an even~$N$).
The factor $\eps^k$ corresponds to the density interpretation (we rescale the time by $\eps$), and in the limit $\eps\to 0$ we get a continuous-time determinantal process: the \emph{densities} are determinants of the corresponding minors of the matrix~$K$.
\end{itemize}

Considering the limit in the second sense, we will see that this jump edges process converges to a circle-based analogue of the beads process studied in~\cite{Sun, Boutillier}. Passing then to the limit $L\to\infty$ allows to recover exactly their beads' process, providing an alternate viewpoint on its correlation kernel (see~\cite[Eq.~(9)]{Boutillier}).

\begin{figure}[!h!]
\begin{center}
\includegraphics[scale=0.9]{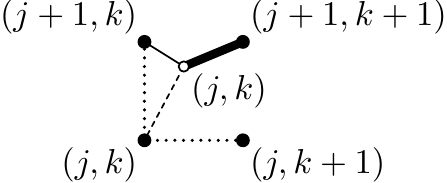}
\end{center}
\caption{``No-stone'' edge (dashed line), ``stone staying'' edge (simple line) and ``jumping'' edge (bold line), as well as the indices of the corresponding black and white vertices.}\label{f:indices}
\end{figure}

To do all of this rigorously, let us first consider the behaviour of such a configuration for a fixed~$\eps$. Let $\Mm,\Mp$ be given, and  $\Mm< j\le \Mp$ and $1\le k\le L$ be the time- and circle-wise coordinates respectively. We will use the conventions from Section~\ref{s:view}: a white vertex with the coordinates $(k,j)$ is joined with the black vertices with the coordinates $(k,j)$, $(k,j+1)$ and $(k+1,j+1)$ (see Fig.~\ref{f:indices}). Let us group the vertices into the (size $L$) blocks with the same  $j$ (time) coordinate. The matrix $W$ then takes the form
\begin{equation}
W(\eps,\Mm,\Mp)=\left( \begin{matrix}
-U_1 & B & 0 & \dots & 0 & 0\\
0 & -E & B & \dots & 0 & 0\\
0 & 0 & -E & \dots & 0 & 0\\
\hdotsfor{6} \\
0 & 0 & 0 & \dots & -E & B\\
0 & 0 & 0 & \dots & 0 & -U_2^T,
\end{matrix}\right)
\end{equation}
where $B=E+\eps C$ if $N$ is odd, and $B=E+\rt \eps C$ if $N$ is even, and the matrices $U_1$ and $U_2$ of size $L\times (L - N)$ correspond to the initial and final boundary conditions (consisting of ones and zeros only). 

Now, let us calculate the inverse matrix $W(\eps,\Mm,\Mp)^{-1}$: fix some $(k,j)$ and consider the vector $u=u^{(j,k)}$ that is send by $W(\eps, \Mm,\Mp)$ to the base vector with the only $1$ at the moment $j$ of time at the place~$k$. The above block decomposition allows then to write this equation as 
\begin{equation}\label{eq:A-u}
\left( \begin{matrix}
-U_1 & B & 0 & \dots & 0 & 0\\
0 & -E & B & \dots & 0 & 0\\
0 & 0 & -E & \dots & 0 & 0\\
\hdotsfor{6} \\
0 & 0 & 0 & \dots & -E & B\\
0 & 0 & 0 & \dots & 0 & -U_2^T
\end{matrix}\right)
\left( \begin{matrix}
[u_{\Mm}]\\
u_{\Mm+1}\\
u_{\Mm+2}\\
\vdots \\
u_{\Mp-1}\\
u_{\Mp}
\end{matrix}\right) = \left( \begin{matrix}
0\\
\dots\\
0\\
e_k \\
0\\
\dots\\
0
\end{matrix}\right);
\end{equation}
here $[u_{M_-}],u_{M_{-}+1},\dots,u_{M_+}$ are the blocks of $u$, and in the right hand side the base vector $e_k$ is placed at $j$-th size $L$ block. We denote the first component $[u_{\Mm}]$ (and not by~$u_{\Mm}$), because it is of size~$L-N$ instead of~$L$, and define $u_{\Mm}:=U_1([u_{\Mm}])\in \bbR^L$. 

The block lines other than the the last one of the system~\eqref{eq:A-u} become a recurrent relation
\begin{equation}\label{eq:relation}
\begin{cases}
-u_{i} + B u_{i+1} = 0, & i\neq j,  \quad \Mm< i<\Mp. \\
-u_{j} + B u_{j+1} = e_k.
\end{cases}
\end{equation}
The first and the last lines become the ``boundary conditions'' $u_{\Mm}\in V_-$, $u_{\Mp}\in V_+$, where $V_-:=U_1 (\bbR^{L-N})$ and $V_+ := \ker U_2^T$
are $L-N$ and $N$-dimensional subspaces respectively. The relation~\eqref{eq:relation} implies that 
$$
u_{j}=B^{-(j-\Mm)} u_{\Mm}, \quad B u_{j+1} = B^{\Mp-j} u_{\Mp}.
$$

Hence we are decomposing the vector $e_k$ as a sum $e_k= - u_- + u_+$, where 
\begin{equation}\label{eq:decomposition}
u_-:=u_j\in V_{-,j}:= B^{-(j-\Mm)} V_-, \quad  u_+:=Bu_{j+1} \in V_{+,j}:=B^{\Mp-j} V_+.
\end{equation}

Now, $(i,k')$-th element of $W^{-1}$ is the $k'$-th coordinate of the vector $u_i$, that is equal to
\begin{equation}\label{eq:u-W}
u_i=\begin{cases}
B^{j-i} u_-,  & i\le j \\ 
B^{-(i-j)} u_+, & i>j.
\end{cases}
\end{equation}

Note that the matrix $W(\eps, \Mm,\Mp)$ might be degenerate for small $M:=\Mp-\Mm$, when there are no possible length $M$ paths joining the given initial and boundary conditions. Actually, the above arguments show that matrix $W$ is invertible if and only if the subspaces~$B^M V_-$ and $V_+$  are transversal (and the corresponding $L\times L$ determinant is easily seen to be equal to $\det W$).

\begin{proposition}\label{p:M-limit}
For any $\eps$ sufficiently small, as $\Mm\to-\infty$, $\Mp\to +\infty$, the elements of the matrix $W(\eps,\Mm,\Mp)^{-1}$ pointwise converge to those given by $K(j,k;j',k')= K(j'-j,k,k')$, where, considering $K(i,\cdot,\cdot)$ as a $L\times L$ matrix, one has
$$
K(j,\cdot,\cdot) = \begin{cases}
- B^{-j} P_-, & j\le 0, \\ 
B^{-j} P_+, & j> 0.
\end{cases}
$$
Here $P_+$ is the projector on the space $\hV_+$ spanned by $N$ consecutive Fourier harmonics, from $-m$-th to $m$-th for odd $N=2m+1$ and from $-m$-th to $m-1$-th for even $N=2m$, and $P_-=E-P_+$ is the projector on its orthogonal complement $\hV_-$, spanned by the $L-N$  complementary ones.
\end{proposition}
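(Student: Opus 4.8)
The plan is to take the limit inside the explicit decomposition \eqref{eq:decomposition}--\eqref{eq:u-W} and control it through the spectral geometry of $B$. First I would record that, since $B=E+\eps C$ (odd $N$) or $B=E+\rt\eps C$ (even $N$) is a polynomial in the unitary shift $C$, it is normal and diagonalised by the orthonormal Fourier harmonics $v_k$, with eigenvalues $\mu_k=1+\eps\lambda_k$ (resp. $1+\rt\eps\lambda_k$), where $\lambda_k=\exp(2\pi i k/L)$. A one-line computation gives $|\mu_k|^2=1+2\eps\Re\lambda_k+\eps^2$ (resp. with $\Re(\rt\lambda_k)$), a strictly increasing function of $\Re\lambda_k$; hence for every $\eps>0$ the moduli $|\mu_k|$ are ordered by exactly the rule that selected the $N$ consecutive harmonics in the proof of Theorem~\ref{th:TASEP}. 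For $\eps$ small enough $B$ is invertible and there is a strict spectral gap separating the $N$ dominant harmonics, spanning $\hV_+$, from the remaining $L-N$ recessive ones, spanning $\hV_-$. Normality forces $\hV_+\perp\hV_-$ and $\bbC^L=\hV_+\oplus\hV_-$, and $P_+,P_-$ are the orthogonal projectors onto them.

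The core of the proof is the convergence of the iterated boundary subspaces $V_{+,j}=B^{\Mp-j}V_+$ and $V_{-,j}=B^{-(j-\Mm)}V_-$ appearing in \eqref{eq:decomposition}. I would isolate the standard power-iteration lemma: if a diagonalisable operator has a spectral gap after its $d$ largest eigenvalues, with dominant eigenspace $\hV$, then $B^nW\to\hV$ in the Grassmannian, at a geometric rate, for every $d$-dimensional $W$ transversal to the complementary (recessive) eigenspace. One proves it by realising $W$ as the graph of a linear map from $\hV$ to that complement; under $B^n$ this map is conjugated by factors that expand $\hV$ and contract the complement, so it tends to $0$ and the graph tends to $\hV$. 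Applying this to $B$ with $d=N$ yields $V_{+,j}\to\hV_+$ as $\Mp\to+\infty$, and to $B^{-1}$ --- whose dominant $(L-N)$-dimensional eigenspace is $\hV_-$ --- with $d=L-N$ yields $V_{-,j}\to\hV_-$ as $\Mm\to-\infty$. Since the two limits are complementary and mutually orthogonal, the decomposition $e_k=u_+-u_-$ of \eqref{eq:decomposition}, with $u_+\in V_{+,j}$ and $u_-\in V_{-,j}$, converges to the orthogonal one: $u_+\to P_+e_k$ and $u_-\to -P_-e_k$.

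The proposition then follows by continuity. Fixing the two time indices $i,j$, the operator $B^{i-j}$ is fixed, so letting $\Mm\to-\infty$, $\Mp\to+\infty$ in \eqref{eq:u-W} gives $u_i\to -B^{j-i}P_-e_k$ for $i\le j$ and $u_i\to B^{-(i-j)}P_+e_k$ for $i>j$. Reading off the $k'$-th coordinate of $u_i$ (an entry of $W(\eps,\Mm,\Mp)^{-1}$) and writing $\Delta=i-j$, I obtain exactly the matrices $-B^{-\Delta}P_-$ for $\Delta\le 0$ and $B^{-\Delta}P_+$ for $\Delta>0$, depending on $(i,j)$ only through $\Delta$; this is the claimed kernel $K(\Delta,\cdot,\cdot)$ together with its stated translation invariance in time. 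The even-$N$ case is verbatim with $C$ replaced by $\rt C$.

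The step I expect to cause the real difficulty is the transversality hypothesis underlying the two subspace limits: one needs $V_+=\ker U_2^T$ to meet $\hV_-$ trivially and $V_-=\operatorname{im}U_1$ to meet $\hV_+$ trivially. Because $V_\pm$ are the coordinate subspaces cut out by the boundary fillings while $\hV_\pm$ are spans of Fourier harmonics, these are precisely the statements that certain square submatrices of the DFT matrix --- rows indexed by boundary positions, columns by the dominant (resp.\ recessive) frequencies --- are nonsingular. This holds for all boundary configurations when $L$ is prime (Chebotarev's theorem on minors of the DFT matrix), but it can fail for special configurations when $L$ is composite. The cleanest resolution is to observe that these very conditions are equivalent to the invertibility of $W(\eps,\Mm,\Mp)$ for all large $M=\Mp-\Mm$: by the transversality criterion recalled just before the proposition, $W$ is invertible iff $V_{+,j}$ and $V_{-,j}$ are transversal, and in the limit these tend to $\hV_+$ and $\hV_-$ (which are transversal) exactly when the above intersections vanish. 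Thus the hypotheses needed for the argument are nothing but those guaranteeing that $W^{-1}$ is well defined along the limit in the first place.
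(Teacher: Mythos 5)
Your reduction is essentially the paper's: the paper likewise reduces the proposition to the Grassmannian convergences $V_{+,j}=B^{\Mp-j}V_+\to\hV_+$ and $V_{-,j}=B^{-(j-\Mm)}V_-\to\hV_-$ and then passes to the limit in the decomposition $e_k=-u_-+u_+$ of~\eqref{eq:decomposition}, \eqref{eq:u-W}. The genuine gap is at the step you yourself flag as ``the real difficulty,'' and neither of your two remarks patches it. First, the alarm about composite $L$ is a red herring: the frequency sets occurring here are blocks of \emph{consecutive} harmonics (the arc $r=-m,\dots,m$, resp.\ the complementary arc), not arbitrary subsets, so Chebotarev's theorem is irrelevant. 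For rows indexed by distinct positions $k_1,\dots,k_d$ and columns by $d$ consecutive frequencies $r_0,r_0+1,\dots,r_0+d-1$, the corresponding minor of the DFT matrix factors as $\prod_j e^{-2\pi i k_j r_0/L}$ times the Vandermonde determinant of the distinct unit-circle nodes $z_j=e^{-2\pi i k_j/L}$, hence is nonzero for \emph{every} $L$ and every boundary configuration. This one observation is exactly what your power-iteration lemma needs, and it is missing from your write-up.

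Second, your substitute argument --- that the transversality conditions are ``equivalent to the invertibility of $W(\eps,\Mm,\Mp)$ for all large $M$'' --- is circular in the direction you actually need. Transversality does imply invertibility for large $M$; but the converse (invertibility along the limit forces $V_+\cap\hV_-=\{0\}$ and $V_-\cap\hV_+=\{0\}$) is asserted, not proved, and is false as a general statement about an operator with a spectral gap. Take $B=\diag(2,1)$ on $\bbR^2$, with $\hV_+=\langle e_1\rangle$, $\hV_-=\langle e_2\rangle$, and boundary subspaces $V_+=\langle e_2\rangle$, $V_-=\langle e_1\rangle$: then $B^M V_-=\langle e_1\rangle$ is transversal to $V_+$ for every $M$, so the analogue of $W$ is invertible throughout, yet $B^M V_+=\langle e_2\rangle$ never approaches $\hV_+$, and the limit of $W^{-1}$ exists but is built from the projectors onto $\langle e_1\rangle$ and $\langle e_2\rangle$ \emph{swapped} relative to the claimed kernel. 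So invertibility of $W$ along the sequence cannot by itself justify the hypothesis of your lemma. For contrast, the paper closes this hole by positivity rather than by Vandermonde: in Pl\"ucker coordinates $\wedge^N B$ is a non-negative primitive matrix, so Perron--Frobenius gives convergence of $\wedge^N B^M$ applied to the (non-negative) Pl\"ucker vector of the coordinate subspace $V_+$ with no transversality hypothesis at all, and the strict positivity of the leading eigenvector's Pl\"ucker coordinates then yields transversality of $\hV_+$ with every coordinate subspace --- in particular with $V_-$ --- which is fed into exactly your power-iteration argument for $B^{-1}$. Either fix (the Vandermonde minors or the Perron--Frobenius positivity) completes your proof; as written, it has a hole precisely where you located the difficulty.
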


\begin{proof}
Due to the relations~\eqref{eq:decomposition} and \eqref{eq:u-W} it suffices to show that the spaces $V_{-,j}$ and $V_{+,j}$ converge in the setting of the proposition respectively to $\hV_-$ and~$\hV_+$. Such a convergence is quite natural to expect, as $\hV_+$ is the span of $N$ eigenvectors of $B$ with the largest in absolute value eigenvalues, while $\hV_-$ is the span of $L-N$ smallest ones. 

\begin{figure}[!h!]
\begin{center}
\includegraphics[scale=0.8]{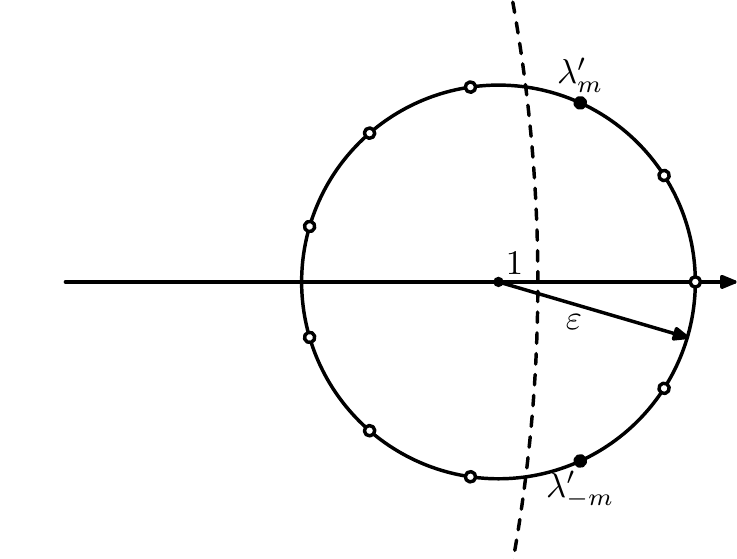} \quad 
\includegraphics[scale=0.8]{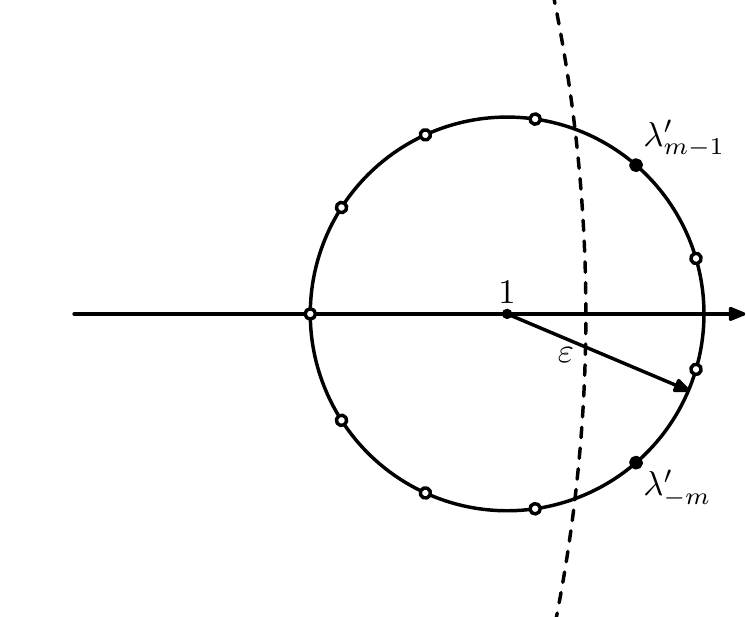}
\end{center}
\caption{Eigenvalues $\lambda_k'$, separated into groups of $N$ and $L-N$ by their absolute value for the cases of odd (left) and even (right)~$N$}\label{f:evals}
\end{figure}

To show such a convergence formally, we start with the study of $V_{j,+}$, and consider first the case of $N$ odd, $N=2m+1$. For the action of $B=E+\eps C$ on $N$-dimensional subspaces of $V=\bbR^L$, let us pass to the Plucker coordinates, considering the action of $\wedge^N B$ on the space $\wedge^N V$. Take the base of $\wedge^N V$ formed by 
\[
v_{k_1,\dots,k_N}=e_{k_1}\wedge \dots\wedge e_{k_N},  \quad k_1<\dots<k_N.
\]
Then, in the same way as in Section~\ref{ss:proof}, the action of $\wedge^N B$ in this base is given by a matrix with \emph{non-negative} elements, and there exists a sufficiently large power of $\wedge^N B$ that has all its elements strictly positive. This implies that on the projective space, the $\wedge^N B$-iterations of all the base vectors $v_{k_1,\dots,k_N}$ converge to the direction of the highest absolute value eigenvector of this operator. 

At the same time, as $B$ commutes with the rotation $C$, its eigenvectors (in $\bbC^L$) are the Fourier harmonics $\sum_k \exp(2\pi i k r/L) e_k, \quad r\in \bbZ_L$, with the corresponding eigenvalues 
\[
\lambda'_r=1+\eps \zeta_r, 
\]
where for odd $N$ we denote $\zeta_r:=\exp(2\pi i r/L)$ the eigenvalues of the rotation~$C$.

The $N$ largest in absolute values are the ones corresponding to $r=-m,\dots,m$, that are the base of~$\hV_+$, and we have thus obtained the desired convergence of $V_{+,j}$ to $\hV_+$. 

Now, if $N$ is even, $N=2m$, again as in Section~\ref{ss:proof} we consider the base 
\[
e'_1 = e_1, \quad e'_2 = \rt e_2, \quad e'_3 = \rt^2 e_3, \quad \dots, \quad e'_L = \rt^{L - 1} e_{L}. 
\]
Then one has 
\[
\rt C e'_i = \begin{cases}
e'_{i+1}, & i<L
\\
-e'_{1}, & i=L
\end{cases}
\]
Hence, for $B=E+\eps \rt C$ the operator $\wedge^N B$ again acts on the corresponding base 
\[
v'_{k_1,\dots,k_N}= e'_{k_1}\wedge\dots\wedge e'_{k_N}, \quad k_1<\dots<k_N
\]
as a matrix with non-negative elements (the signs cancel out if $e'_1$ occurs out of $e'_L$), and has a power whose elements are strictly positive.

We thus again get the convergence of directions of $\wedge^N B$-iterations of any of the base vectors under the to the direction of the highest weight eigenvector. The eigenvectors of $B$ are again the Fourier harmonics, with the eigenvalues 
\[
\lambda'_{r} = 1+ \eps \zeta_r,
\]
where for the even $N$ we denote by $\zeta_r:=\exp(2\pi i (r+1/2)/L)$ the eigenvalues of $\rt C$. The $N=2m$ largest in absolute value are $\lambda'_{-m},\dots,\lambda'_{m-1}$, and the corresponding eigenvectors (Fourier harmonics) span the space $\hV_+$. We have obtained the desired convergence of $V_{+,j}$ to~$\hV_+$.

Now, in both these cases ($N$ odd or even) the leading eigenvector $\beta$ of $\wedge^N B$ (that is the Plucker coordinates of $\hV_+$) is a vector with all strictly positive coordinates. This implies that the space $\hV_+$ is transversal to any of the $N-L$-dimensional coordinate subspaces (spanned by $L-N$ base vectors). Indeed, for any such subspace the wedge product $\beta\wedge e_{k_1}\wedge \dots\wedge e_{k_{L-N}}$ is equal to $\beta_{k'_1,\dots,k'_N} \, e_1\wedge \dots \wedge e_L$, where $k'_1,\dots,k'_N$ are the complementary coordinates to $k_1,\dots, k_{L-N}$, and (as the Plucker coordinate $\beta_{k'_1,\dots,k'_N}$ is strictly positive) thus is nonzero. This transversality implies that for any such coordinate subspace, in particular, for the space $V_-$, its $B^{-1}$-iterations will converge to the space $\hV_-$ spanned by the $L-N$ eigenvectors of $B$ with the least norm of the eigenvalues. 

\end{proof}

\begin{rem}
As the matrix $B$ commutes with the circle rotation $C$, and as the Fourier transform diagonalizes it with the eigenvalues $\lambda'_r$ for the Fourier harmonic $v_r=(e^{-2\pi i k r/L})_{k\in\bbZ_L}$, we can consider the operator $K(j;\cdot,\cdot)$ as a composition of four operators: 
\begin{itemize}
\item Fourier transform $F$;
\item Projection that leaves only one of two complementary groups of adjacent Fourier coefficients, of length $N$ (that is, $-m,\dots, m$ or $-m,\dots, m-1$ depending on if $N$ is odd or even) for positive~$j$ and of length $L-N$ (that is, $m+1,\dots, L-(m+1)$ or $m,\dots, L-(m+1)$ depending on if $N$ is odd or even) for negative~$j$; 
\item Diagonal operator of multiplication by $(\lambda'_r)^{-j}$
\item Inverse Fourier transform~$F^{-1}$.
\end{itemize}
\end{rem}

\begin{cor}
Again, as the matrix $B$ commutes with the circle rotation $C$, we actually have $K(j;k,k')=K(j,k-k')$, where 
\begin{equation}\label{eq:K-finite}
K(j,k) = 
\begin{cases}
 \frac{1}{L}\sum_{r=-m}^m (\lambda'_r)^{-j} e^{-2\pi i k r/L}, & j> 0 \\ 
- \frac{1}{L}\sum_{r=m+1}^{L-m-1} (\lambda'_r)^{-j} e^{-2\pi i k r/L}, & j\le0 \\ 
\end{cases}
\end{equation}
for odd $N=2m+1$ and 
\begin{equation}\label{eq:K-finite-even}
K(j,k) = 
\begin{cases}
 \frac{1}{L}\sum_{r=-m}^{m-1} (\lambda'_r)^{-j} e^{-2\pi i k r/L}, & j> 0 \\ 
- \frac{1}{L}\sum_{r=m}^{L-m-1} (\lambda'_r)^{-j} e^{-2\pi i k r/L}, & j\le0 \\ 
\end{cases}
\end{equation}
for even $N=2m$.
\end{cor}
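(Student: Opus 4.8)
The plan is to obtain \eqref{eq:K-finite}--\eqref{eq:K-finite-even} by simply reading off the entries of the two operators $B^{-j}P_+$ (for $j>0$) and $-B^{-j}P_-$ (for $j\le 0$) furnished by Proposition~\ref{p:M-limit}, evaluating them in the Fourier basis that simultaneously diagonalizes $B$ and $C$. This is exactly the four-step composition described in the Remark preceding the statement.

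First I would establish the translation invariance $K(j;k,k')=K(j,k-k')$. Since $B=E+\eps C$ for odd $N$ and $B=E+\eps\rt C$ for even $N$, the matrix $B$ commutes with the cyclic shift $C$; hence so does $B^{-j}$, and so do the spectral projectors $P_\pm$, because the subspaces $\hV_\pm$ are spanned by eigenvectors of $C$, namely the Fourier harmonics. Therefore $K(j;\cdot,\cdot)=\pm B^{-j}P_\mp$ commutes with $C$. An $L\times L$ matrix commuting with the cyclic shift is circulant, so its $(k,k')$-entry depends only on $k-k'$, which is the first assertion of the corollary.

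Next comes the explicit evaluation, which is a direct computation. I would use the Fourier harmonics $v_r=(e^{-2\pi i kr/L})_{k\in\bbZ_L}$, which satisfy $\|v_r\|^2=L$, are mutually orthogonal, and obey $Bv_r=\lambda'_r v_r$. The orthogonal projector onto the span of $\{v_r:r\in S\}$ for an index set $S$ has entries $\frac1L\sum_{r\in S}(v_r)_k\overline{(v_r)_{k'}}=\frac1L\sum_{r\in S}e^{-2\pi i(k-k')r/L}$, and precomposing with $B^{-j}$ multiplies the $r$-th summand by $(\lambda'_r)^{-j}$. Taking $S$ to be the index set of $\hV_+$ (equal to $\{-m,\dots,m\}$ for odd $N$) when $j>0$, and the complementary index set of $\hV_-$ (equal to $\{m+1,\dots,L-m-1\}$) together with the leading sign $-$ when $j\le 0$, yields \eqref{eq:K-finite}. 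The even case is identical once one substitutes $\zeta_r=\exp(2\pi i(r+1/2)/L)$ and replaces the index sets by $\{-m,\dots,m-1\}$ and $\{m,\dots,L-m-1\}$, giving \eqref{eq:K-finite-even}.

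I do not expect a genuine obstacle: the analytic heart of the matter~--- the convergence as $\Mm\to-\infty$, $\Mp\to+\infty$ and the identification of $P_\pm$ as the spectral projectors of $B$~--- has already been settled in Proposition~\ref{p:M-limit}, so the corollary reduces to bookkeeping. The only points that will require care are the normalization $\|v_r\|^2=L$, which is responsible for the prefactor $\frac1L$, and the precise description of the two complementary arcs of Fourier indices, whose endpoints shift by a half-integer between the odd and even cases.
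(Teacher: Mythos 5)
Your proposal is correct and takes essentially the paper's own route: the paper states this corollary without a separate proof, treating it as the entrywise read-off of $\pm B^{-j}P_\pm$ in the common eigenbasis of $B$ and $C$ described in the preceding Remark (Fourier transform, projection onto the relevant arc of harmonics, multiplication by $(\lambda'_r)^{-j}$, inverse Fourier transform), which is exactly your computation, including the $\frac{1}{L}$ normalization and the half-integer index shift for even $N$. The one blemish is your shorthand $K(j;\cdot,\cdot)=\pm B^{-j}P_{\mp}$, which mismatches the signs of Proposition~\ref{p:M-limit} ($+B^{-j}P_+$ for $j>0$, $-B^{-j}P_-$ for $j\le 0$, i.e.\ $\pm B^{-j}P_{\pm}$); your detailed evaluation afterwards uses the correct pairing, so this is only a typo.
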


Now, let us pass to the limit as $\eps\to 0$, with the simultaneous time-rescaling by considering $t=\eps j$. Note that even if this order of limits is slightly different from the one in 
Sec.~\ref{ss:freeze} (where we passed to the limit first as $\eps\to 0$ on the time intervals 
$\sim [-\frac{\tau}{2\eps}, \frac{\tau}{2\eps}]$ and then to the limit as $\tau\to\infty$), 
we still get the same random process as a limit: 
\begin{Lem}
Limit of the processes in Proposition~\ref{p:M-limit} as $\eps\to 0$ coincides with the one described in Theorem~\ref{t:cylinder}.
\end{Lem}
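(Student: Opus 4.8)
The plan is to exhibit both processes as limits of the \emph{same} family of finite dimer models $W(\eps,\Mm,\Mp)$, and to show that the two orders of limits produce the same finite-dimensional distributions: on one side, first $\Mm\to-\infty$, $\Mp\to+\infty$ and then $\eps\to0$ under $t=\eps j$ (Proposition~\ref{p:M-limit}); on the other, first $\eps\to0$ on the rescaled intervals $\Mp=-\Mm=\lfloor\tfrac{\tau}{2\eps}\rfloor$ and then $\tau\to\infty$ (Section~\ref{ss:freeze}, giving Theorem~\ref{t:cylinder}). Both limiting objects are time-translation invariant point processes on $\bbZ_L\times\bbR$, so it suffices to fix a bounded window $\{|t|\le T\}$ and finitely many space-time points and to compare the correlations there.

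First I would record a quantitative version of the convergence in Proposition~\ref{p:M-limit}. The eigenvalues $\lambda'_r=1+\eps\zeta_r$ of $B$ satisfy $|\lambda'_r|=1+\eps\cos\frac{2\pi r}{L}+O(\eps^2)$, so the ratio of the $N$-th to the $(N+1)$-th largest moduli, which separates $\hV_+$ from $\hV_-$, equals $1+c\eps+O(\eps^2)$ for some $c=c(L,N)>0$; that is, the spectral gap of $B$ is of order~$\eps$. Hence the subspaces $V_{+,j}=B^{\Mp-j}V_+$ and $V_{-,j}=B^{-(j-\Mm)}V_-$ of~\eqref{eq:decomposition} approach $\hV_+$ and $\hV_-$ at the geometric rates $(1+c\eps)^{-(\Mp-j)}$ and $(1+c\eps)^{-(j-\Mm)}$. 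Through~\eqref{eq:u-W} this yields, on a fixed window,
\[
\bigl| W(\eps,\Mm,\Mp)^{-1}_{(i,k'),(j,k)} - K(j-i;k,k')\bigr| \le \const\cdot (1+c\eps)^{-d},
\qquad d=\min\bigl(\Mp-\max(i,j),\ \min(i,j)-\Mm\bigr),
\]
where $K$ is the $\Mm=-\infty,\Mp=+\infty$ kernel of Proposition~\ref{p:M-limit} and $\const$ is uniform in $\eps$ by the same estimates used in Lemma~\ref{l:no-simultaneous} (those already keep the relevant partition functions and local densities bounded away from $0$ and $\infty$ as $\eps\to0$).

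Now I apply this with the Section~\ref{ss:freeze} scaling $\Mp=-\Mm=\lfloor\tfrac{\tau}{2\eps}\rfloor$ and points with $|j|\le T/\eps$. Then $d\gtrsim\frac{\tau-2T}{2\eps}$, and for $\tau>2T$ the error is at most $\const\cdot(1+c\eps)^{-(\tau-2T)/(2\eps)}=\const\cdot e^{-c(\tau-2T)/2}(1+o(1))$, bounded uniformly in small $\eps$ and vanishing as $\tau\to\infty$. Thus, on any fixed window, replacing the finite model $W(\eps,\Mm,\Mp)$ (whose $\eps\to0$ then $\tau\to\infty$ limit is, by Lemmas~\ref{l:no-simultaneous} and~\ref{l:proba}, exactly the process of Theorem~\ref{t:cylinder}) by the doubly-infinite model with kernel $K(\,\cdot\,;\eps)$ introduces an error that disappears in the iterated limit. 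Since $K(\,\cdot\,;\eps)$ does not depend on $\tau$, the remaining $\tau\to\infty$ limit is vacuous, leaving $\lim_{\eps\to0}K(\,\cdot\,;\eps)$ under $t=\eps j$ — precisely the limiting process of Proposition~\ref{p:M-limit}. Concretely $(\lambda'_r)^{-j}=(1+\eps\zeta_r)^{-t/\eps}\to e^{-\zeta_r t}$, so the kernel~\eqref{eq:K-finite} converges termwise to its continuous-time limit, completing the identification.

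The main obstacle is exactly this uniform-in-$\eps$ control of boundary influence: the gap of $B$ shrinks like $\eps$, so a priori the finite-$M$ kernel is close to the $M=\infty$ kernel only on the scale $M\gg1/\eps$. What rescues the argument is that in the Section~\ref{ss:freeze} scheme the boundary sits at distance $\sim\tau/\eps$, beating $1/\eps$ by the factor $\tau$ and giving the $\eps$-free exponential bound $e^{-\Theta(\tau)}$. Beyond this, one must check that the bound is not spoiled by the (a priori $\eps$-dependent) prefactors from the weights in~\eqref{kasteleyn1} and from discarding simultaneous jumps; as noted, this is handled verbatim by the uniform estimates already established in the proof of Lemma~\ref{l:no-simultaneous}.
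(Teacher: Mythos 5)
Your proposal is correct, but it takes a genuinely different route from the paper's proof, which is much softer. The paper argues via the Gibbs (domain Markov) property: the restriction of the infinite-height measure of Proposition~\ref{p:M-limit} to the strip $[-\frac{\tau}{2\eps},\frac{\tau}{2\eps}]\times\bbZ_L$ is a \emph{mixture}, over the configurations arising on the levels $\pm\frac{\tau}{2\eps}$, of the finite-height measures of Section~\ref{ss:freeze} with those boundary conditions; since Theorem~\ref{t:cylinder} yields one and the same limit for \emph{every} choice of boundary conditions, the mixture converges to that same limit whatever the mixing weights, and no estimate is needed. You instead keep the boundary conditions fixed and prove a quantitative kernel-perturbation bound: the spectral gap of $B$ is of order $\eps$, the boundary sits at distance $\sim\tau/\eps$ from the observation window, so the finite- and infinite-height inverse kernels differ by $e^{-\Theta(\tau)}$ uniformly in small $\eps$, which kills the boundary influence in the iterated limit. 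Both mechanisms exploit the same phenomenon (boundary influence decays across $\sim\tau/\eps$ levels), but yours buys an explicit rate and avoids invoking the boundary-condition independence of Theorem~\ref{t:cylinder} as a black box, at the price of more linear algebra; the paper's buys brevity. One repair to your write-up: the uniformity in $\eps$ of the constant in your kernel estimate does not follow from Lemma~\ref{l:no-simultaneous}, which controls partition functions rather than inverse-matrix perturbations. It instead follows from the facts that $B$ is diagonalized in the fixed Fourier basis and that the subspaces $V_\pm$, $\hV_\pm$ do not depend on $\eps$: writing $V_+$ as the graph of a fixed linear map $\hV_+\to\hV_-$ (transversality being guaranteed by the nonvanishing of the Plucker coordinates of $\hV_\pm$, as in the proof of Proposition~\ref{p:M-limit}) gives $\eps$-independent constants for the Grassmannian convergence, while propagating errors across the window $|t|\le T$ costs at most $\|B^{\pm1}\|^{2T/\eps}\le e^{O(T)}$, again uniformly in $\eps$. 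With that justification substituted, your argument is complete.
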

\begin{proof}
Let $\tau$ be fixed. Then, once $\Mm<-\frac{\tau}{2\eps}$ and $\Mp>\frac{\tau}{2\eps}$, due to the Gibbs property we can consider the random configuration inside $[-\frac{\tau}{2\eps},\frac{\tau}{2\eps}]\times \bbZ_L$ as being sampled in two steps: first the boundary conditions on the levels $\pm \frac{\tau}{2\eps}$, and then the inside part as a Gibbs measure conditional to these boundary conditions. Thus, the restriction of the Gibbs measure on the domain $[-\frac{\tau}{2\eps},\frac{\tau}{2\eps}]$ can be seen as a mix of the measures discussed in Sec.~\ref{ss:freeze} (as the boundary conditions are varied).

Now, as $\eps\to 0$, $\eps$-rescaled images of all these measures converge to the same process described in Theorem~\ref{t:cylinder}, and hence the same applies to their average (whichever were the averaging coefficients).
\end{proof}

We can now pass to the limit either in the probabilities of the stones being present, or for the position and moments of their jumps. For the stones, as the probability of their presence is given by an exact determinantal formula for any fixed $\eps>0$, we have the same kind of formula for their limit:
\begin{theorem}\label{stones}
For the limit process in Theorem~\ref{t:cylinder}, the probability that the \emph{stones} are present at positions $k_1,\dots,k_n$ at times $t_1,\dots,t_m$ is equal to the determinant 
\[
\det (\tilde{K}(t_a-t_b, k_a - k_b)_{a,b=1,\dots,n}),
\]
where 
\begin{equation}\label{eq:lim-K}
\tilde{K}(t,k)=
\begin{cases}
 \frac{1}{L}\sum_{r=-m}^m e^{-t \zeta_r} e^{-2\pi i k r/L}, & t> 0 \\ 
 -\frac{1}{L}\sum_{r=m+1}^{L-m-1} e^{-t \zeta_r} e^{-2\pi i k r/L}, & t\le0 \\ 
\end{cases}
\end{equation}
for odd $N=2m+1$ and 
\begin{equation}\label{eq:lim-K-even}
\tilde{K}(t,k)=
\begin{cases}
 \frac{1}{L}\sum_{r=-m}^{m-1} e^{-t \zeta_r} e^{-2\pi i k r/L}, & t> 0 \\ 
- \frac{1}{L}\sum_{r=m}^{L-m-1} e^{-t \zeta_r} e^{-2\pi i k r/L}, & t\le0 \\ 
\end{cases}
\end{equation}
for even $N=2m$.
\end{theorem}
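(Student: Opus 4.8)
The plan is to read the statement off as a direct $\eps\to 0$ limit of the exact finite-$\eps$ Kasteleyn determinant, using the kernel $K(j,k)$ already computed in Proposition~\ref{p:M-limit} and in the Corollary. All the analytic content collapses to a single pointwise exponential limit; everything else is bookkeeping of incidence conventions. First I would fix $\eps>0$ small and recall what the finite model yields. By the preceding Lemma the rescaled $\eps\to 0$ limit of the Gibbs measures of Proposition~\ref{p:M-limit} is exactly the process of Theorem~\ref{t:cylinder}, so it suffices to compute with the finite-$\eps$ measures and then let $\eps\to 0$. For fixed $\eps$, formula~\eqref{kasteleyn1} together with the $M\to\infty$ limit of Proposition~\ref{p:M-limit} expresses the probability that a prescribed collection of edges is used as a minor of $K=W^{-1}$. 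The event ``a stone is present at $(k_a,j_a)$'' corresponds to the white vertex at $(k_a,j_a)$ being matched by either its ``stays'' edge or its ``jump'' edge; since the jump edges carry weight $\eps$, the probability of a jump at any prescribed site tends to $0$, so in the limit stone-presence is detected by the ``stays'' edges alone. These edges have weight and Kasteleyn factor equal to $1$, so the product in~\eqref{kasteleyn1} is trivial and the presence probability is exactly the corresponding minor of $K$.

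Next I would identify which entries of $K$ occur. A ``stays'' edge joins the white vertex $(k_a,j_a)$ to the black vertex $(k_a,j_a+1)$, so the determinant is built from entries of the form $K(j_a+1,k_a;j_b,k_b)$ (or their transpose, according to the row/column convention for $W$, which only transposes the matrix and leaves the determinant unchanged). By the translation invariance $K(j,k;j',k')=K(j'-j,k-k')$ recorded in the Corollary, each such entry equals $K$ evaluated at the time difference $(j_b-j_a)\pm 1$ and the space difference $\pm(k_a-k_b)$, the exact signs being dictated by the incidence conventions. The only genuinely new feature is the temporal offset $\pm 1$: it is a true feature of the discrete graph, but under the rescaling $t=\eps j$ it contributes a factor $(1+\eps\zeta_r)^{\mp 1}\to 1$ and so disappears in the limit. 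The spatial index carries no such rescaling, so the spatial difference survives unchanged—this is precisely why the stone kernel is evaluated at $k_a-k_b$ with no shift, in contrast with the jump process of Theorem~\ref{beads}, whose spatial offset $-1$ persists.

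It then remains to take $\eps\to 0$ entrywise. Writing $j=t/\eps$ (rounding to the nearest integer, the resulting $O(\eps)$ error in the exponent being harmless), for each harmonic one has
$$
(\lambda'_r)^{-j}=(1+\eps\zeta_r)^{-t/\eps}\xrightarrow[\eps\to 0]{} e^{-t\zeta_r},
$$
uniformly over the finite index set of $r$. Plugging this into the explicit finite sums~\eqref{eq:K-finite} (odd $N$) and~\eqref{eq:K-finite-even} (even $N$) converts them termwise into the sums defining $\tilde K(t,k)$ in~\eqref{eq:lim-K} and~\eqref{eq:lim-K-even}; the sign and the change of summation range between $t>0$ and $t\le 0$ are inherited directly from the two cases of the Corollary, since $j=t/\eps$ has the same sign as $t$. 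As each matrix entry converges and the matrices are of fixed finite size, the determinants converge to $\det(\tilde K(t_a-t_b,k_a-k_b))$, which is the assertion.

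The main obstacle is not analytic—the exponential limit is immediate—but combinatorial: one must track carefully the white/black incidence and the resulting $\pm 1$ offsets in both coordinates, and argue that the temporal offset is washed out by the rescaling while the spatial structure persists. A secondary point requiring care is the justification, via the preceding Lemma, that computing with the finite-$\eps$ kernel and only afterwards letting $\eps\to 0$ reproduces the process of Theorem~\ref{t:cylinder}, rather than an artefact of a different order of limits.
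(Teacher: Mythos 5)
Your proposal is correct and follows essentially the same route as the paper: the paper likewise obtains Theorem~\ref{stones} by combining the Kasteleyn formula~\eqref{kasteleyn1}, the $M\to\infty$ kernel of Proposition~\ref{p:M-limit} (in the explicit Fourier form of its Corollary), the lemma identifying the $\eps\to 0$ rescaled limit with the process of Theorem~\ref{t:cylinder}, and the entrywise limit $(1+\eps\zeta_r)^{-t/\eps}\to e^{-t\zeta_r}$ applied to the fixed-size minors. Your write-up is in fact more detailed than the paper's own derivation (which presents the theorem as an immediate consequence of this machinery), and your tracking of the white/black incidence offsets --- the ``stays'' edges carrying a temporal shift that is washed out by the rescaling, versus the spatial shift $-1$ that persists for the jump edges of Theorem~\ref{beads} --- is exactly the bookkeeping the paper leaves implicit.
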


\begin{cor} \label{cor:measure}
Take all the $t_i$ equal. Then, what we get is a distribution of probabilities for the configurations of stones at a single moment of time, and Theorem~\ref{stones} states that this is a determinantal point process with the kernel given by the projection operator on $N$ adjacent Fourier harmonics. This re-proves the statement of Theorem~\ref{th:TASEP} from the determinantal processes point of view.
\end{cor}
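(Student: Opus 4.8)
The plan is to specialize the determinantal formula of Theorem~\ref{stones} to a single time slice and read off the resulting correlation kernel. Since Theorem~\ref{t:cylinder} identifies the law of the stones at any fixed time (in the bulk limit) with the maximal entropy measure of the two-sided Markov chain, it suffices to show that this single-time kernel is precisely the projection onto $N$ consecutive Fourier harmonics appearing in Theorem~\ref{th:TASEP}; that identification is exactly what re-proves the determinantal description of the Parry measure.

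First I would set all the $t_a$ equal, so that every argument $t_a-t_b$ entering $\det(\tilde{K}(t_a-t_b,k_a-k_b))$ becomes $0$. The one genuinely delicate point is which branch of the piecewise definition~\eqref{eq:lim-K} (resp.~\eqref{eq:lim-K-even}) to use, since $\tilde{K}(t,k)$ jumps across $t=0$. The correct choice is the $t\to 0^+$ branch: the ``stone stays'' edge used in the Kasteleyn correspondence joins the white vertex at time $j$ to the black vertex at time $j+1$, so an equal-time coincidence of stones corresponds to a strictly positive time increment in the inverse Kasteleyn matrix $K=W^{-1}$. Substituting $t=0$ into the $t>0$ line of~\eqref{eq:lim-K} then makes every factor $e^{-t\zeta_r}=1$, leaving
\[
\tilde{K}(0^+,k) = \frac{1}{L}\sum_{r=-m}^{m} e^{-2\pi i k r/L}
\]
for odd $N=2m+1$, and the analogous sum over $r=-m,\dots,m-1$ for even $N=2m$ from~\eqref{eq:lim-K-even}.

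Next I would recognize $\tilde{K}(0^+,k-k')$ as the matrix element of the orthogonal projection $P_+$ onto the span of the $N$ consecutive Fourier harmonics indexed by these same $r$, that is, $P_+=\frac{1}{L}\sum_r v_r v_r^*$ with $v_r=(e^{-2\pi i k r/L})_k$. This is verbatim the operator $K=\sum_{i=-m}^m v_i v_i^*$ produced in the proof of Theorem~\ref{th:TASEP}. A useful consistency check, which also pins down the branch, is the diagonal value $P_+(0)=N/L$, the correct mean density of stones, whereas the $t\le 0$ branch would give the negative value $-(L-N)/L$ and cannot be a correlation kernel. Consequently the equal-time stone configuration is the determinantal point process with correlation kernel $P_+$, and the probability of a full $N$-stone configuration $k_1<\dots<k_N$ is $\det(P_+(k_a-k_b))_{a,b=1}^N$---exactly the state-probability expression~\eqref{eq:K} for the Parry measure. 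Combined with Theorem~\ref{t:cylinder}, this re-derives that the maximal entropy measure is determinantal with the stated projection kernel.

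I expect the main obstacle to be precisely the sign and branch bookkeeping just described: one must verify, by tracking the unit shifts in the Kasteleyn correspondence of Section~\ref{ss:beads}, that the stone-presence minors select the $t\to 0^+$ (particle) branch $P_+$ rather than its particle-hole complement $-P_-$ coming from the $t\le 0$ branch. A secondary point to confirm is that the $N$ adjacent harmonics $r=-m,\dots,m$ (resp.\ $-m,\dots,m-1$) chosen here coincide with the maximal-modulus eigenvectors singled out in Theorem~\ref{th:TASEP}; this follows from the indexing already fixed in Proposition~\ref{p:M-limit}, so once the branch is settled the remaining identification is immediate.
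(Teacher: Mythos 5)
Your proposal is correct and follows essentially the same route as the paper, which proves the corollary simply by specializing Theorem~\ref{stones} to equal times, reading off the projection kernel, and matching it with the state-probability formula~\eqref{eq:K} from the proof of Theorem~\ref{th:TASEP} (with Theorem~\ref{t:cylinder} supplying the identification of the equal-time marginal with the maximal-entropy measure). Your resolution of the $t=0$ branch ambiguity is the one genuinely delicate point, and you settle it correctly: since a ``stone stays'' edge joins a white vertex at level $j$ to a black vertex at level $j+1$, the equal-time minors of $W^{-1}$ carry an intrinsic $+1$ time shift and hence use the $t\to 0^+$ branch $P_+$ (with diagonal $N/L$), not the $t\le 0$ branch $-P_-$ of \eqref{eq:lim-K}, which would literally give negative diagonal entries --- a detail the paper glosses over but which is needed for the corollary to hold as stated.
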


In the same way, consideration of the positions and moments of the jumps gives
\begin{theorem}\label{beads}
For the limit process in Theorem~\ref{t:cylinder}, the common density of the probability for the jumps at $(k_1,t_1),\dots,(k_n,t_n)$ is equal to the determinant 
\begin{equation}\label{eq:tK-jumps}
\det (\tilde{K}(t_a-t_b, k_a - k_b - 1)_{a,b=1,\dots,n})
\end{equation}
for odd $N$ and to the determinant 
\begin{equation}\label{eq:tK-jumps1}
\det (\rt \tilde{K}(t_a-t_b, k_a - k_b - 1)_{a,b=1,\dots,n})
\end{equation}
for even $N$.

\end{theorem}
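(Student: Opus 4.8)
The plan is to run exactly the same Kasteleyn computation that yields Theorem~\ref{stones}, but applied to the \emph{jump} edges rather than to the \emph{stone-stays} edges. Fix $\eps$ and large $\Mp,\Mm$, and apply the formula~\eqref{kasteleyn1} to the collection of jump edges sitting at the prescribed positions and moments. For a jump edge the weight is $\wt_{bw}=\eps$ and the Kasteleyn factor is $\alpha_{bw}=1$ in the odd case and $\alpha_{bw}=\rt$ in the even case, so the joint probability of $n$ such edges being occupied equals $\eps^{n}\det(K_{w_j b_i})$ for odd $N$ and $(\rt\eps)^{n}\det(K_{w_j b_i})$ for even $N$, where $K=W^{-1}$ is the inverse Kasteleyn matrix analysed in Proposition~\ref{p:M-limit}. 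The factor $\eps^{n}$ is precisely the Jacobian of the time-rescaling $t=\eps j$ and turns the discrete probability into a continuous-time \emph{density}, as anticipated in \S\ref{ss:beads}; the residual $\rt^{n}$ in the even case is distributed one factor per row and reappears inside the determinant as $\rt\tilde K$.

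The only genuine input beyond Theorem~\ref{stones} is the index bookkeeping. Recall (Fig.~\ref{f:indices}) that the jump edge incident to the white vertex $(k,j)$ joins it to the black vertex $(k+1,j+1)$, whereas the stone-stays edge joins $(k,j)$ to $(k,j+1)$. Thus, relative to the stone-stays edge used in Theorem~\ref{stones}, the black endpoint of a jump edge is displaced by $+1$ in the spatial coordinate and is unchanged in the temporal one. Since $K$ is translation invariant --- by Proposition~\ref{p:M-limit} and its Corollary it depends only on the differences of the two indices --- this spatial displacement shifts the spatial argument of every matrix entry by one unit, while the common $+1$ shift in time (shared with the stone-stays edge) is multiplied by $\eps$ and disappears in the limit. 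Calibrating the sign conventions against Theorem~\ref{stones}, where the stone determinant is $\det\tilde K(t_a-t_b,k_a-k_b)$, the jump determinant becomes $\det\tilde K(t_a-t_b,k_a-k_b-1)$.

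Finally I would pass to the limit $\eps\to0$ with $t=\eps j$. Exactly as in the derivation of~\eqref{eq:lim-K} and~\eqref{eq:lim-K-even}, the entries $K(j,k)$ of Proposition~\ref{p:M-limit}'s matrix converge to $\tilde K(t,k)$ because $(\lambda'_r)^{-j}=(1+\eps\zeta_r)^{-t/\eps}\to e^{-t\zeta_r}$, and the interchange of this limit with the finite-dimensional determinant is immediate; the convergence of the underlying process itself is already guaranteed by Lemma~\ref{l:no-simultaneous} and Theorem~\ref{t:cylinder}. This yields~\eqref{eq:tK-jumps} for odd $N$ and, carrying along the factors $\rt$, the formula~\eqref{eq:tK-jumps1} for even $N$. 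The main obstacle is the middle step: one must be scrupulous about which endpoint of each edge plays the role of the Kasteleyn ``source'' and which the ``target'' (i.e.\ which is the row and which the column index of $K=W^{-1}$), since an error there flips the sign of the spatial argument. The safe route is to fix all conventions by matching the already-established stone formula of Theorem~\ref{stones} and then read off only the \emph{relative} $-1$ shift produced by the extra unit of spatial displacement of the jump edge.
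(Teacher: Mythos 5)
Your proposal is correct and follows essentially the same route as the paper: the paper also proves Theorem~\ref{beads} by applying the Kasteleyn formula~\eqref{kasteleyn1} to the jump edges (weight $\eps$, factor $\alpha=1$ or $\rt$), reading the $\eps^{n}$ as the density Jacobian under the rescaling $t=\eps j$, absorbing $\rt^{n}$ into the determinant row by row, and obtaining the $-1$ spatial shift from the fact that a jump edge joins the white vertex $(k,j)$ to the black vertex $(k+1,j+1)$ rather than $(k,j+1)$, with the limit kernel $\tilde K$ supplied by Proposition~\ref{p:M-limit} and its corollary. Your cautionary remark about calibrating row/column conventions against the already-established stone formula of Theorem~\ref{stones} matches the paper's own (terse) justification of the shift.
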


Note (see Figure~\ref{f:indices}) that the jump edges join a white vertex with the coordinates $(j,k)$ to the black one with the coordinates $(j+1,k+1)$, and this space-shift by $1$ leads to the $-1$ added to the difference of $k$ in~\eqref{eq:tK-jumps} and \eqref{eq:tK-jumps1}.

Next one can remark that the function $\tilde{K}$ given by~\eqref{eq:lim-K} is not perfectly suitable for the determinantal processes study: its asymptotics allows an exponential growth to the past or to the future. However, there is again a freedom in the choice of the gauge (similar to the one that we have already used for the jump edges): we can conjugate the matrix $K$ that we obtain for a finite $\eps$ by the diagonal matrix with the elements~$(c')^j$, where $c'$ is chosen so that 
\begin{equation}\label{c'-cond}
|\lambda'_{m+1}|<c'<|\lambda'_{m}|. 
\end{equation}
This replaces the kernel~\eqref{eq:K-finite} with
\begin{equation}\label{eq:K-finite-c}
K_{c'}(j,k) = 
\begin{cases}
 \frac{1}{L}\sum_{r=-m}^m (\lambda'_r/c')^{-j} e^{-2\pi i k r/L}, & j> 0 \\ 
 -\frac{1}{L}\sum_{r=m+1}^{L-m-1} (\lambda'_r/c')^{-j} e^{-2\pi i k r/L}, & j\le0, \\ 
\end{cases}
\end{equation}
that is now exponentially decreasing in both $j\to +\infty$ and in $j\to-\infty$. 

Now, as we pass to the limit as $\eps\to 0$, it is natural to take $c'=1+\eps c$ (so that its $j=\frac{t}{\eps}$-th power tends to the exponent). The condition~\eqref{c'-cond} then becomes 
\begin{equation}\label{c-gamma}
\Re \zeta_{m+1}<c<\Re\zeta_m,
\end{equation}
and such a choice of $c$ after passing to the limit leads to the kernel
\begin{equation}\label{eq:lim-K-c}
\tilde{K}_c(t,k)=
\begin{cases}
 \frac{1}{L}\sum\limits_{r:\, \Re \zeta_r >c} e^{-t (\zeta_r-c)} e^{-2\pi i k r/L}, & t> 0 \\ 
 -\frac{1}{L}\sum\limits_{r: \,\Re \zeta_r <c} e^{-t (\zeta_r-c)} e^{-2\pi i k r/L}, & t\le0 \\ 
\end{cases}
\end{equation}
for the ``finite-circle bead process''  that exponenitally decreases in both past and future.

A final remark is that passing to the limit as $L\to \infty$ with $N/L\to \rho$ transforms the kernel~\eqref{eq:lim-K-c} to the one appearing in~\cite[Eq.~(9)]{Boutillier} under time renormalization and change of parametrization. Indeed, as $L\to\infty$, the eigenvalues $\zeta_m$, $\zeta_{m+1}$ tend to the common limit $g_{\infty}:=e^{\pi i \rho}$, and hence the limit value of $c$'s (from passing to the limit in~\eqref{c-gamma}) is 
\[
c_{\infty}:=\cos \pi \rho.
\] 

The sums in the kernel~\eqref{eq:lim-K-c} tend to the integral over the corresponding arcs of the unit circle; the limit kernel thus is
\begin{equation}
\tilde{J}_{beads}(t,k)=
\begin{cases}
\frac{1}{2\pi} \int_{-\pi\rho}^{\pi\rho} e^{-t (\zeta-c_{\infty})} e^{-i\varphi(k-1)} \,d\varphi, & t> 0, \\ 
-\frac{1}{2\pi} \int_{\pi\rho}^{2\pi - \pi\rho} e^{-t (\zeta-c_{\infty})} e^{-i\varphi(k-1)} \,d\varphi, & t\le0, \\ 
\end{cases}
\end{equation}
where $\zeta = e^{i\varphi}$. Changing the integration variable to $\zeta$, with $d\varphi=\frac{d\zeta}{i\zeta}$, we get:

\begin{equation}\label{Jt-beads}
\tilde{J}_{beads}(t,k)=
\begin{cases}
\frac{1}{2\pi i} \int_{I_1} e^{-t (\zeta-c_{\infty}))} \zeta^{-k} \,d\zeta, & t> 0 \\ 
-\frac{1}{2\pi i} \int_{I_2} e^{-t (\zeta-c_{\infty}))} \zeta^{-k} \,d\zeta, & t\le0, \\ 
\end{cases}
\end{equation}
where $I_1 = \exp ( i [-\pi \rho,\pi \rho])$ and $I_2= \exp ( i [\pi \rho,2\pi -\pi \rho])$ are two complementary arcs of the unit circle joining $\overline{g}_{\infty}$ and $g_{\infty}$ (see Fig.~\ref{f:zeta}).

\begin{figure}[!h!]
\begin{center}
\includegraphics[scale=0.8]{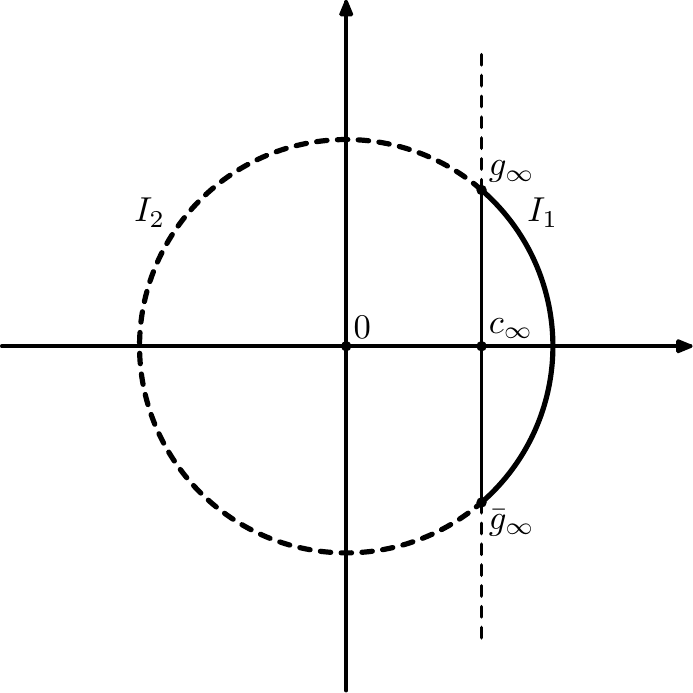}
\end{center}
\caption{Integration paths}\label{f:zeta}
\end{figure}

Now, let $\rho<1/2$, and hence $c_{\infty}>0$. The function under the integral is holomorphic in $\bbC\setminus \{0\}$, and hence the integral over the arc $I_1$ can be replaced with the integral along a straight segment; denoting $\zeta=c_{\infty}+i \phi \sqrt{1-c_{\infty}^2}$ transforms this integral into 
$$
\sqrt{1-c_{\infty}^2} \cdot \int_{[-1,1]} e^{-it\phi \sqrt{1-c_{\infty}^2}} (c_{\infty}+i \phi \sqrt{1-c_{\infty}^2})^{-k} d\phi.
$$
In the same way, as the function for $t<0$ is exponentially decreasing in the left half-plane, the integral over the arc $I_2$ equals to the integral over $[g_{\infty},c_{\infty}+i\infty] \cup [c_{\infty}-i\infty, \overline{g}_{\infty}]$, and thus to 
$$
-\sqrt{1-c_{\infty}^2} \cdot \int_{\bbR \setminus [-1,1]} e^{-it\phi \sqrt{1-c_{\infty}^2}} (c_{\infty}+i \phi \sqrt{1-c_{\infty}^2})^{-k} d\phi.
$$
Taking $\gamma:=c_{\infty}$ and rescaling the time $\sqrt{1-c_{\infty}^2}$ times, we obtain the kernel, appearing in~\cite[Eq.~(9)]{Boutillier}.

\section{Young Through The Looking Glass}\label{s:Young}

The study of the Plancherel measures $\mu_n$ on the spaces $\bbY_n$ in the seminal paper~\cite{BOO}, 
was based on their \emph{poissonization}. Namely, for a fixed $\theta>0$, the authors consider the mixed sum $\sum_n \frac{e^{-\theta^2} (\theta^2)^n}{n!} \mu_n$ that is a measure on the space of all Young diagrams $\bbY=\bigsqcup_n \bbY_n$. Then, the authors show that these measures are determinantal ones, with kernels that are explicitly specified. 

It is interesting to note, that the perfect matchings encoding allows to explain, \emph{why} these measures are determinantal. The author thanks G.~Merzon and V.~Kleptsyn for these remarks.

Namely, consider the hexagonal graph corresponding to the encoding of a path in the Young graph, with some ``target diagram''~$\lambda$ (see Fig.~\ref{f:encodings-tilings}). Denote this graph $\Gamma_{\lambda,M}$, where~$M$ is the height of the graph. The target diagram is then specified by upper right ``green'' edges atop of the last row, being the maya encoding for~$\lambda$  (namely, these edges attachments correspond to the empty holes). 

Let us remove these edges, add a mirror image of the same graph, and join it with the initial one by 
vertical edges at \emph{all} the vertices: see Fig.~\ref{f:mirror}, right. Denote this graph by~$\widehat{\Gamma}_{M}$. Then, a perfect matching on the resulting graph is a pair of length~$M$ paths in the maya diagram encodings, heading towards the same ``target'' diagram $\lambda$, encoded by the matched pairs that cross the mirror, where on each step each stone either stays or jumps forward. An example of such matching is on Fig.~\ref{f:mirror}, right, with the encoded jumps shown on  Fig.~\ref{f:mirror}, left.

As earlier, let us equip the ``jump'' edges with a very small weight~$\eps$, while taking the height of this graph to be $2M\sim \frac{2}{\eps}\theta$. Then (in the same way as before), as $\eps\to 0$, for a fixed width and growing height graph, the total probability of a simultaneous jump (that is, of existence of a level at which two stones jump simultaneously) tends to 0. 

For any given $n$-cell diagram $\lambda$, the perfect matchings in the graph $\Gamma_{\lambda,M}$, that do not encode any simultaneous jumps, are in one-to-one correspondence with a pair of a path to~$\lambda$ in the Young graph (describing the order of the jumps) and of the set of rows when these jumps (in this order) occur. The weight of each such matching is $\eps^n$, there are $\dim \lambda$ different paths towards $\lambda$ in the Young graph, and hence (as $\eps\to 0$ and accordingly $M\to \infty$) their total weight asymptotically behaves as 
$$
{M \choose n} \eps^n \cdot \dim \lambda \sim \frac{(M\eps)^n}{n!}  \dim \lambda \to \frac{\theta^n}{n!} \dim \lambda.
$$

The perfect matching in $\widehat{\Gamma}_M$ is a pair of two such matchings with the same target diagram~$\lambda$, and hence the total weight of matchings corresponding to a given $\lambda$ asymptotically behaves as

\begin{figure}
\begin{center}
\includegraphics[scale=0.9]{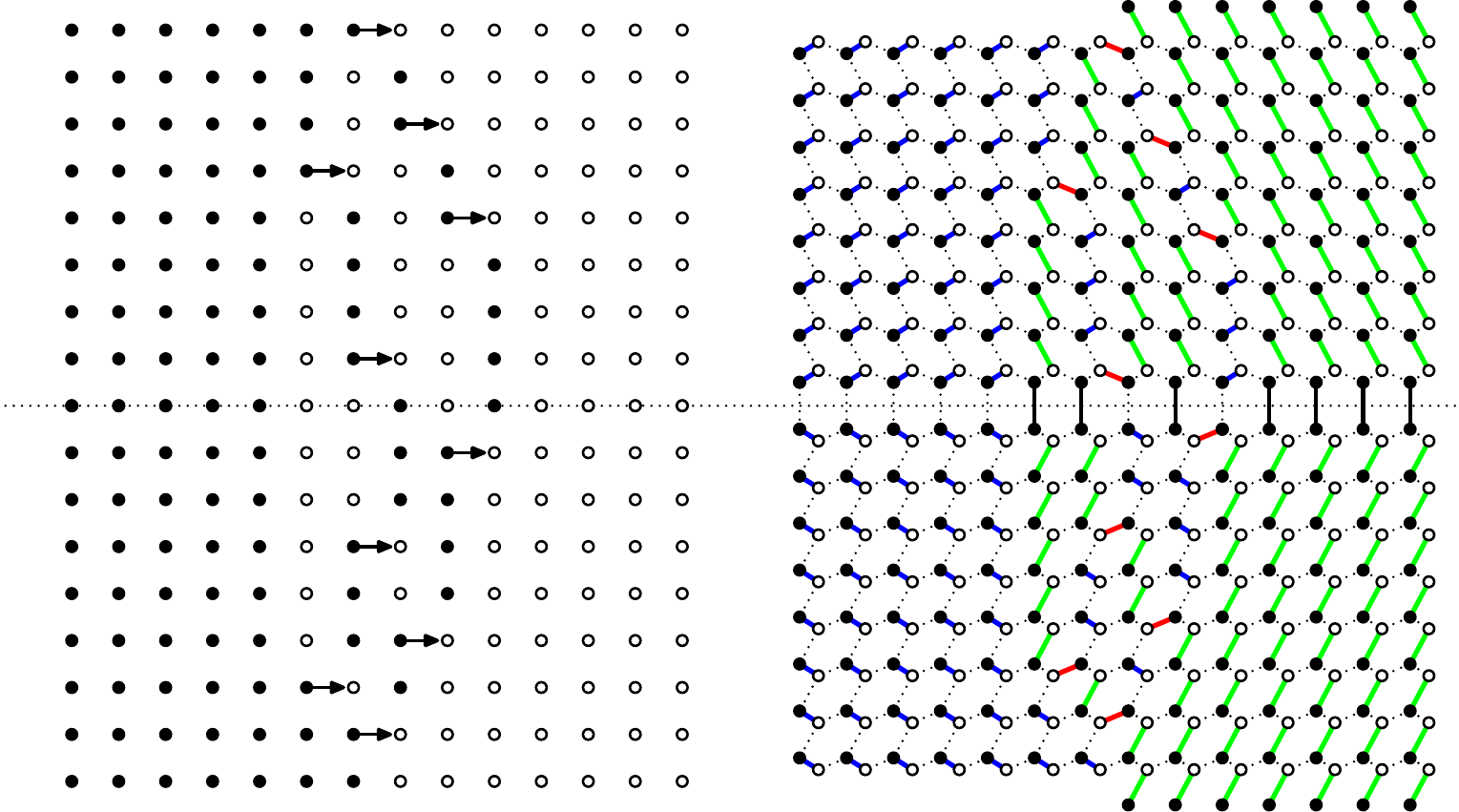}
\end{center}
\caption{Domino tiling for the Poissonization of the Plancherel measure}\label{f:mirror}
\end{figure}

\begin{equation}
\left(\frac{\theta^n}{n!} \dim \lambda\right)^2 =  \frac{\theta^{2n}}{n!} \cdot \frac{\dim^2 \lambda}{n!} = 
\frac{(\theta^2)^n}{n!} \cdot \mu_n(\{\lambda\}).
\end{equation}

Thus, normalizing the limiting distribution to the probability one, one will get  the poissonization of the Plancherel measures, restricted to the set of diagrams that fit to a given \emph{width}. Finally, as the width tends to the infinity, one gets exactly the poissonization of all the Plancherel measures.

On the other hand, the normalized probability distribution that comes from a perfect matching on a weighted planar bipartite graph is known to be determinantal (due to Kasteleyn-type arguments).
Moreover, as a side remark the same argument explains why the width-restricted (on one or on both sides) poissonizations are also determinantal.

\section{Acknowledgments} 

The author would like to thank Vadim Gorin, Alexey Bufetov, Leonid Petrov, Greta Panova, Alejandro Morales, Igor Pak, Christophe Dupont, Grigory Merzon and Victor Kleptsyn for their interest to the work and helpful discussions.

\end{document}